\renewcommand{\-}[0]{\nobreakdash-\hspace{0pt}}
\tikzset{dot/.style={draw=none, circle, fill=black, inner sep=1pt}}
\def\su{\textsf{su}\xspace}
\def\sua{\textsf{sua}\xspace}
\def\Catt{\textsc{Catt}\xspace}
\def\Cattsua{\textsc{Catt}\textsubscript{\sua}\xspace}
\def\Cattsu{\textsc{Catt}\textsubscript{\su}\xspace}
\newcommand\mypara[1]{

\vspace{4pt}
\noindent
\emph{#1}}
\tikzset{
  Rightarrow/.style={double equal sign distance,>={Implies},->},
  triple/.style={-,preaction={draw,Rightarrow}}
}
\DeclareMathOperator{\id}{id}
\newcommand*{\Coh}[3]{\ensuremath\mathsf{coh}\;(#1:#2)[#3]}
\newcommand*{\Ctx}{\ensuremath{\mathsf{Ctx}}}
\newcommand*{\Type}{\ensuremath{\mathsf{Type}}}
\newcommand*{\Term}{\ensuremath{\mathsf{Term}}}
\newcommand*{\arr}[3]{\ensuremath{#1 \to_{#2} #3}}
\newcommand*{\sub}[1]{\ensuremath{\llbracket #1 \rrbracket}}
\newcommand*{\supp}{\ensuremath{\mathsf{supp}}}
\newcommand*{\bound}[2]{\ensuremath{\partial^{#1}_{#2}}}
\newcommand*{\inc}[3]{\ensuremath{\delta^{#1,#2}_{#3}}}
\newcommand\doubleplus{+\kern-1.3ex+\kern0.8ex}
\newcommand*{\insertion}[3]{\ensuremath{#1\mathop{\mathord{\ll}_{#2}}#3}}
\newcommand*{\insertionprime}[3]{\ensuremath{#1\mathop{\mathord{\ll'}_{#2}}#3}}
\renewcommand*{\th}{\ensuremath{\mathsf{th}}}
\newcommand*{\bh}{\ensuremath{\mathsf{bh}}}
\newcommand*{\lh}{\ensuremath{\mathsf{lh}}}
\newcommand*{\+}{\mathbin{\#}}
\DeclareMathOperator*{\bighash}{\text{\LARGE \(\+\)}}
\renewcommand*{\sc}{\ensuremath{\mathsf{sc}}}
\newtheorem{theorem}{Theorem}
\newtheorem{prop}[theorem]{Proposition}
\newtheorem{cor}[theorem]{Corollary}
\newtheorem{lemma}[theorem]{Lemma}
\theoremstyle{definition}
\newtheorem{definition}[theorem]{Definition}
\newtheorem{example}[theorem]{Example}
\theoremstyle{remark}
\begin{document}

\title{\bf A Syntax for Strictly Associative\\and Unital \(\infty\)-Categories}

\author{Eric Finster\footnote{University of Birmingham, \texttt{e.l.finster@bham.cs.ac.uk}}~,
Alex Rice\footnote{University of Cambridge, \texttt{alex.rice@cl.cam.ac.uk}}~,
and Jamie Vicary\footnote{University of Cambridge, \texttt{jamie.vicary@cl.cam.ac.uk}}}



\maketitle

\noindent
\begin{abstract}
  We present the first definition of strictly associative
  and unital $\infty$-category. Our proposal takes the form of a type
  theory whose terms describe the operations of such structures, and
  whose definitional equality relation enforces desired strictness
  conditions.  The key technical device is a new computation rule in the definitional equality of
  the theory, which we call \emph{insertion}, defined in terms of a universal property.  On terms for which it is
  defined, this operation ``inserts'' one of the arguments of a
  substituted coherence into the coherence itself, appropriately
  modifying the pasting diagram and result type, and simplifying the
  syntax in the process.  We generate an equational theory from this
  reduction relation and we study its properties in detail, showing
  that it yields a decision procedure for equality.

  Expressed as a type theory, our model is well-adapted for generating
  and verifying efficient proofs of higher categorical statements. We
  illustrate this via an OCaml implementation, and give a number of
  examples, including a short encoding of the syllepsis, a
  5-dimensional homotopy that plays an important role in the homotopy
  groups of spheres.
\end{abstract}

\section{Introduction}
\label{sec:introduction}

\mypara{Background.}
The theory of higher categories has growing importance in computer science, mathematics, and physics, with fundamental applications now recognized in type theory~\cite{Hofmann1994, hottbook}, quantum field theory~\cite{Atiyah2009, CSPthesis}, and geometry~\cite{Lurie2009}. Its relevance for logic was made abundantly clear by Hoffman and Streicher~\cite{Hofmann1994}, whose groupoid model of Martin-L\"of type theory violated the principle of \textit{uniqueness of identity proofs}~(UIP). This paved the way to the modern study of proof-relevant logical systems, in which one can reason about proofs themselves as fundamental objects, just as one may study the homotopy theory of paths of a topological space.

While this \textit{proofs-as-paths} perspective yields considerable additional power, a key drawback is the extensive additional proof obligations which can arise when working with the resulting path types. These are commonly understood to organize into three classes: the \emph{associator} witnessing that for three composable paths $f,g,h$, the compositions $f \circ (g \circ h)$ and $(f \circ g) \circ h$ are equivalent; the \textit{unitors} witnessing that $\id \circ f$ and $f \circ \id$ should both be equivalent to $f$ itself; and the \emph{interchanger} witnessing that compositions in different dimensions should commute. Worse, such path witnesses themselves admit further constraints, such as the \emph{pentagon condition} for associators, for which further higher-dimensional witnesses must be computed, compounding the problem exponentially in higher dimensions.

The need to construct and manage such witnesses (also called \textit{weak structure}) can complicate a path-relevant proof, potentially beyond the point of tractability. Indeed an explicit proof of the \textit{syllepsis}, an important homotopy from low-dimensional topology which is  entirely formed from such path witnesses, was formalized only recently~\cite{sojakova2022syllepsis, StrictUnits}. This motivates the search for a syntax that can trivialize witnesses as far as possible, an effort which has been underway in the mathematics community since at least the 1970s. An early realization was that in the fully strict case, where all witnesses are trivialized, too much expressivity is lost, with  Grothendieck being one of the first to observe this~\cite[page~2]{PursuingStacks}.

The focus therefore turned to identifying a \textit{semistrict} theory, where as many witnesses as possible are trivialized, while retaining equivalence with the fully weak theory. An early contribution by Gray~\cite{Gray1974} yielded a definition of semistrict tricategory with trivial  associators and unitors, leaving only the interchangers nontrivial; later work by Gordon, Power and Street showed that this definition loses no expressibility~\cite{Gordon1995}. Simpson conjectured that ``it suffices to weaken any one of the principal structures involved'' to obtain a definition of semistrict $n$\-category~\cite{Simpson1998}, and Street had  sketched a possible approach based on iterated enrichment, where a closed monoidal category of semistrict $n$\-categories is defined in each dimension~\cite{Street1995}. However, a subsequent analysis by Dolan~\cite{Dolan1996} indicated that the required monoidal closure properties could not be fulfilled.  One can see from this episode that even finding a reasonable \emph{definition} of these objects is a non-trivial  task.

\begin{figure}
\begin{align*}
\text{{(a)}}
&
\quad
T:=\!\!\!
\begin{aligned}
\begin{tikzpicture}[xscale=.3, yscale=.6, scale=.8, font=\small]
\draw (0,0) node [dot] {} to (-1,1) node [dot] {} to (-2,2) node [dot] {} node [above] {$\alpha\vphantom{\beta}$};
\draw (-1,1) to (0,2) node[dot]{} node [above] {$\beta$};
\draw (0,0) to (1,1) node[dot]{} node [above] {$\gamma$};
\end{tikzpicture}
\end{aligned}
\,\,,\,\,
\text{arg}(\alpha) := \!\!\!
\begin{aligned}
\begin{tikzpicture}[xscale=.3, yscale=.6, scale=.8, font=\small, red]
\draw (-1,0) node [dot, red] {} to (-1,1) node [dot, fill=red] {} to (-2,2) node [dot, fill=red] {} node [above] {$\phi\vphantom{\beta}$};
\draw (-1,1) to (0,2) node[dot, red]{} node [above] {$\psi$};
\end{tikzpicture}
\end{aligned}
\,\,\leadsto\,\,
T':=\!\!\!
\begin{aligned}
\begin{tikzpicture}[xscale=.4, yscale=.6, scale=.8, font=\small]
\draw (-1,1) to (0,2) node [dot] {} node [above] {$\beta$};
\draw [red] (0,0) node [dot, red] {} to (-1,1) node [dot, red] {} to (-2,2) node [dot, red] {} node [above] {$\phi\vphantom{\beta}$};
\draw [red] (-1,1) to (-1,2) node[dot, red]{} node [above] {$\psi$};
\draw (0,0) to (1,1) node[dot]{} node [above] {$\gamma$};
\end{tikzpicture}
\end{aligned}
\\
\text{{(b)}}
&
\quad
T:=\!\!\!
\begin{aligned}
\begin{tikzpicture}[xscale=.3, yscale=.6, scale=.8, font=\small]
\draw (0,0) node [dot] {} to (-1,1) node [dot] {} to (-2,2) node [dot] {} node [above] {$\alpha\vphantom{\beta}$};
\draw (-1,1) to (0,2) node[dot]{} node [above] {$\beta$};
\draw (0,0) to (1,1) node[dot]{} node [above] {$\gamma$};
\end{tikzpicture}
\end{aligned}
\,\,,\,\,
\text{arg}(\alpha) := \!\!\!
\begin{aligned}
\begin{tikzpicture}[xscale=.3, yscale=.6, scale=.8, font=\small, red]
\draw (-1,0) node [dot, red] {} to (-1,1) node [dot, fill=red] {} node [above] {$\phi$};
\node [dot, white] at (-1,2) {};
\node [above, white] at (-1,2) {$\phi$};
\node [white] at (0,0) {$\phi$};
\node [white] at (-2,0) {$\phi$};
\end{tikzpicture}
\end{aligned}
\,\,\leadsto\,\,
T':=\!\!\!
\begin{aligned}
\begin{tikzpicture}[xscale=.4, yscale=.6, scale=.8, font=\small]
\draw (-1,1) to (-1,2) node [dot] {} node [above] {$\beta$};
\draw [red] (0,0) node [dot, red] {} to (-1,1) node [dot, red] {};
\draw (0,0) to (1,1) node [dot] {} node [above] {$\gamma$};
\end{tikzpicture}
\end{aligned}
\\
\text{{(c)}}
&
\quad
T:=\!\!\!
\begin{aligned}
\begin{tikzpicture}[xscale=.3, yscale=.6, scale=.8, font=\small]
\draw (0,0) node [dot] {} to (-1,1) node [dot] {} to (-2,2) node [dot] {} node [above] {$\alpha\vphantom{\beta}$};
\draw (-1,1) to (0,2) node[dot]{} node [above] {$\beta$};
\draw (0,0) to (1,1) node[dot]{} node [above] {$\gamma$};
\end{tikzpicture}
\end{aligned}
\,\,,\,\,
\text{arg}(\alpha) := \!\!\!
\begin{aligned}
\begin{tikzpicture}[xscale=.3, yscale=.6, scale=.8, font=\small, red]
\draw (0,0) node [dot, red] {} to (-1,1) node [dot, fill=red] {} node [above] {$\phi$};
\node [dot, white] at (-1,2) {};
\node [above, white] at (-1,2) {$\phi$};
\draw (0,0) to (1,1) node [dot, red] {} node [above] {$\psi$};
\end{tikzpicture}
\end{aligned}
\,\,\hspace{2pt} \mathrlap{\hspace{2pt}/}{\leadsto}\,\,
\end{align*}
\caption{Illustrating the insertion operation.}
\label{fig:insertionintro}
\end{figure}

\mypara{Contribution.}
We define $\infty$\-categories with strict associators and unitors, so that only the interchanger structure remains weak, by describing a type theory whose terms express operations valid in such a structure.  To our knowledge this is the first such definition to have been presented in the literature. In this theory the operations give the compositional structure and coherence laws of the $\infty$\-category, while the equational theory trivializes the associator and unitor structure. For example, the equation $(f \circ g) \circ h = f \circ (g \circ h)$ is derivable in our theory.

Our proposal is implemented as a type theory, and the type checker can apply such equalities automatically, without requiring the user to specify explicit associator witnesses. This allows the user to work directly with the semistrict theory, building proofs which  neglect associator and unitor witnesses and their higher-dimensional counterparts, with the computer nonetheless able to verify correctness of the resulting terms. For example, this would enable the user to directly compose higher paths of type $p \Rightarrow (f \circ g) \circ h$ and $f \circ (g \circ h) \Rightarrow q$, even though it seems they should not be directly composable.

Alternatively, given a proof which includes all such explicit witnesses, the system can ``compute them out'', returning a normal form where associator and unitor structure has been eliminated as far as possible. This has the potential to yield a simpler proof.

Our key technical contribution is an \emph{insertion} procedure which generates the semistrict behaviour of our equality relation, and which we show satisfies a simple universal property. Any operation in our theory is defined with respect to a \textit{pasting context}, a gluing of discs that governs the composition; combinatorially, these correspond to finite planar rooted trees. Any operation then has a head tree, with arguments corresponding to the leaves of the tree. We require  two simple definitions: for a rooted tree,  its \emph{trunk height} is the height of the ``tree trunk''; and for any leaf, its \emph{branch height} is the height of the highest branch point sitting below it. See \cref{fig:leafheight} for a graphical illustration of these definitions.  For a compound operation, where arguments of the head tree are themselves assigned further operations, insertion operates as follows, for some chosen leaf $\alpha$ of the head tree: if the branch height of $\alpha$ is at most the trunk height of the argument tree at $\alpha$, and if the argument operation is in ``standard form'', we can insert the argument tree into the head tree.

We illustrate this in \cref{fig:insertionintro}, using Greek letters to label the leaves. In each part, the head tree $T$ has an argument $\alpha$ of branch height 1 (since descending from $\alpha$, the first branch point we encounter is at height 1.) In part (a) the argument tree drawn in red has trunk height 1, and so the height test is satisfied and insertion can proceed, yielding an updated head tree $T'$ in which  the entire argument tree has been inserted and is now visible in red as a subtree, replacing the original leaf~$\alpha$. In part (b) the argument tree also has trunk height 1, and so the insertion can again proceed, although in this case, due to the short stature of the argument tree, the effect is simply to remove the leaf $\alpha$ of the head tree. In part (c) the trunk height of the argument tree is 0, so the height test is not satisfied and insertion cannot proceed. This last example makes it intuitively clear why the height test is important: since the trunk height of the argument tree is so small, there would be no reasonable way to insert the argument tree into the head tree without disrupting the leaf~$\beta$. We show in the paper that this insertion operation can be described algebraically via a universal property.

We combine insertion with two other simpler procedures, \emph{disc removal} and \emph{endo-coherence removal}, to produce our reduction relation on operations of the theory. Our major technical results concern the behaviour of this reduction relation. We show that it terminates, and that it has unique normal forms, yielding a decision procedure for equality.

\mypara{Implementation.}
We have produced an OCaml implementation of our scheme as a type theory, which we call $\Cattsua$, standing for ``categorical type theory with strict unitors and associators''. In \cref{sec:implementation} we give three worked examples, showing that the triangle and pentagon conditions of the definition of monoidal category completely trivialize, and demonstrating that a known proof of the 5\-dimensional syllepsis homotopy reduces to a substantially simpler form. The implementation can be found at:
\begin{center}
  \url{https://github.com/ericfinster/catt.io/tree/lics-sua}
\end{center}

\mypara{Formalisation.}
The majority of technical results in this paper have been formalised in Agda, and we make this available here~\cite{alex_rice_2024_11149192}:
\begin{center}
  \url{https://github.com/alexarice/catt-agda/tree/paper}
\end{center}

\noindent
The formalisation compiles with Agda version 2.6.4.1 and standard library version 2.0. Where a result has been formalised, we supply the file and line number of the corresponding formal proof. In the  Agda files, a comment above a formalised proof indicates the corresponding theorem number here.

\mypara{Related Work.} Our work is based on the theory of \textit{contractible $\infty$\-categories}, a well-studied model of globular $\infty$\-categories originally due to Maltsiniotis~\cite{maltsiniotis2010grothendieck}, who was building on an early algebraic definition of $\infty$\-groupoid by Grothendieck~\cite{PursuingStacks}. An excellent modern presentation is given by Leinster in terms of contractible globular operads~\cite{leinster2004higher}.

Our algebraic theory builds on an existing type theory \Catt for contractible $\infty$\-categories presented at LICS 2017~\cite{Finster2017}, and an extension $\Cattsu$ presented at LICS 2022~\cite{StrictUnits} which describes the strictly unital case. That work includes a reduction relation called \emph{pruning}, which removes a single leaf variable from a pasting context. Here the pruning operation is replaced  by insertion, which includes pruning as a special case, but which in general performs far more radical surgery on the head context. As a result, the termination and confluence properties here are significantly more complex to establish. Our termination proof uses new techniques that  quantify the syntactic complexity of a term, while our confluence proof must analyze many additional critical pairs, some of which are  fundamentally more complex than those handled previously. To allow surgery on pasting contexts we also require a  different presentation of contexts in terms of trees, which changes many aspects of the formal development. Furthermore, unlike the LICS 2022 paper, all technical lemmas here are formalized, putting the work on a stronger foundation.

Above we discussed the problem faced by Street's historical approach to defining semistrict $n$\-categories. His approach aimed to build up the semistrict theory one dimension at a time; in contrast, we retain all the operations of the fully weak structure, instead obtaining  semistrictness via a reduction relation on operations. We do not require the explicit definition of a tensor product on semistrict $n$\-categories, nor do we require any closure properties. As a result Dolan's critique does not apply.



\section{The Type Theory \Catt}
\label{sec:catt}

\noindent
In this section we recall the type theory \Catt and some of its basic properties. We generalize the original presentation~\cite{Finster2017} by parameterizing the theory over a given set of equality rules, thus allowing us to prove some general structural properties generically, and allowing future investigations into strictness results to build on the theory developed here.  We will then specialize to the theory \Cattsua, which is the focus of the present work.

Special cases of this general framework include the original presentation of \Catt by Finster and Mimram~\cite{Finster2017} in which the set of equality rules is empty, as well as the theory \Cattsu~\cite{StrictUnits} whose rules we recall below in the present framework.

\subsection{Syntax for \Catt}
\label{sec:cattbase}


\noindent
\Catt has 4 classes of syntax: contexts, substitutions, types, and terms; the rules for each can be found in \cref{fig:syntax}. We parameterise substitutions, types and terms by their context in order to avoid issues with undefined variables and write \(\Term_\Gamma\) for a term in context \(\Gamma\), \(\Type_\Gamma\) for a type in context \(\Gamma\), and \(\sigma : \Gamma \to \Delta\) for a substitution from \(\Gamma\) to \(\Delta\).

We let \(\equiv\) denote syntactic equality up to alpha renaming. Our presentation will be in terms of named variables to improve readability, though in practice any ambiguity introduced by this choice can be avoided by the use of de Bruijn indices.

\begin{figure}
  \centering
    \begin{tabular}{Sc Sc}
    {
    \begin{prooftree}
      \hypo{\phantom{\Term}} \infer1{\emptyset : \Ctx}
    \end{prooftree}
    }
    &
      {
      \begin{prooftree}
        \hypo{\Gamma : \Ctx} \hypo{A : \Type_\Gamma}
        \infer2{\Gamma, (x : A) : \Ctx}
      \end{prooftree}}
    \\
    {
    \begin{prooftree}
      \hypo{\phantom{\Term}} \infer1{\langle \rangle : \emptyset \to \Gamma}
    \end{prooftree}
    }
    & {
      \begin{prooftree}
        \hypo{\sigma : \Delta \to \Gamma} \hypo{t : \Term_\Gamma}
        \infer2{\langle \sigma , x \mapsto t \rangle : (\Delta, x : A) \to \Gamma}
      \end{prooftree}
      }
    \\
    {
    \begin{prooftree}
      \hypo{\phantom{\Type}} \infer1{\star : \Type_\Gamma}
    \end{prooftree}
    }
    & {
      \begin{prooftree}
        \hypo{A : \Type_\Gamma} \hypo{s : \Term_\Gamma} \hypo{t : \Term_\Gamma}
        \infer3{\arr s A t : \Type_\Gamma}
      \end{prooftree}
      }
    \\
    {
    \begin{prooftree}
      \hypo{x \in \Gamma\vphantom{\Type}} \infer1{x : \Term_\Gamma}
    \end{prooftree}
    }
    & {
      \begin{prooftree}
        \hypo{\Delta : \Ctx} \hypo{A : \Type_\Delta} \hypo{\sigma : \Delta \to \Gamma}
        \infer3{\Coh \Delta A \sigma : \Term_\Gamma}
      \end{prooftree}
      }

    \end{tabular}
  \caption{Syntax constructions in \Catt.}
  \label{fig:syntax}
\end{figure}

A substitution \(\sigma : \Delta \to \Gamma\) maps variables of context \(\Delta\) to terms of context \(\Gamma\). We note that this is non-standard within type theory, but agrees with the direction of morphisms in the coherators of Grothendieck and Maltsiniotis~\cite{maltsiniotis2010grothendieck} on which \Catt is based. For any \(t : \Term_\Delta\) , \(A : \Type_\Delta\), and \(\tau : \Theta \to \Delta\), one defines semantic substitution operations
\[t \sub \sigma : \Term_\Gamma \qquad A \sub \sigma : \Type_\Gamma \qquad \tau \circ \sigma : \Theta \to \Gamma \]
by mutual recursion on types, terms and substitutions (as in \cite{Finster2017}). Note the use of doubled brackets $\sub -$ to denote this operation, which we are careful to distinguish from the single brackets $[-]$ which are part of the syntax of the coherence constructor. Every context has an identity substitution \(\id_{\Gamma} : \Gamma \to \Gamma\) which maps each variable to itself and the operation of substitution is associative and unital so that the collection of contexts and substitutions forms a category which we will denote $\Catt$ as an abuse of notation.



The set of free variables for each syntactic class is defined in a standard way by induction. Given a context \(\Gamma\) and set of variables \(S \subseteq \Gamma\), we say that \(S\) is \emph{downwards closed} when for all \(x : A \in \Gamma\), \(x \in S\) implies that the free variables of \(A\) are a subset of \(S\). For any set \(S \subseteq \Gamma\) we can form its \emph{downward closure}. We can then define the support of a piece of syntax, which intuitively is the set of variables the syntax depends on.

\begin{definition}
  Given a term \(t : \Term_\Gamma\), the \emph{support} of \(t\), \(\supp(t)\), is the downwards closure of the free variables of \(t\) in \(\Gamma\). The support of a type or substitution is defined similarly.
\end{definition}

\begin{example}
  Consider the context $\Gamma = x : \star, y : \star, f : \arr x \star y$.  Then the variable $f$ is a valid term in this context whose set of free variables is simply the singleton set $\{f\}$.  The support of $f$, however, is $\{x,y,f\}$.
\end{example}

Lastly we define the dimension of a type \(A\), \(\dim(A)\), by \(\dim(\star) = 0\) and \(\dim(\arr s A t) = 1 + \dim(A)\). The dimension of a term \(t : \Term_\Gamma\) is given by \(\dim(x) = \dim(A)\) when \(x : A \in \Gamma\) and \(\dim(\Coh \Delta A \sigma) = \dim(A)\). The dimension of a context \(\dim(\Gamma)\) is the maximum of the dimension of the types it contains.  One proves easily by induction that dimension is preserved by substitution.

\subsection{Typing for \Catt}
\label{sec:catt-typing}

\noindent
The coherences of \Catt are determined by a special class of contexts which we refer to as \emph{pasting contexts}.  These correspond intuitively to configurations of globular cells which should admit a unique composition. In particular, for a coherence term \(\Coh \Delta A \sigma\) to be well typed it is necessary that \(\Delta\) is a pasting context. Determining whether an arbitrary context is a pasting context is decidable and we write the judgment
\[ \Delta \vdash_{p}\]
when \(\Delta\) is a pasting context.

Crucial for the typing rules of \Catt is the fact that every pasting context \(\Delta\) has a well defined \emph{boundary}, which is again a pasting context. For a natural number \(n\), we can construct \(\bound n \Delta\), the \(n\)-dimensional boundary of \(\Delta\), and there are inclusion substitutions \(\inc \epsilon n \Delta : \bound n \Delta \to \Delta\) for each \(n\) and \(\epsilon \in \{-,+\}\).

Rules for pasting contexts as well as a definition of the boundary operators can be found in \cite{Finster2017}. We will give an alternative description of pasting contexts as trees in later in \cref{sec:trees}.

\begin{figure}
  \centering
  \begin{tabular}{ScSc}
    {
    \begin{prooftree}
      \hypo{\vphantom{\Gamma \vdash A}} \infer1{\emptyset
        \vdash}
    \end{prooftree}
    }
    & {
      \begin{prooftree}
        \hypo{\Gamma \vdash}
        \hypo{\Gamma \vdash A} \infer2{\Gamma, (x : A) \vdash}
      \end{prooftree}
      }
    \\
    {
    \begin{prooftree}
      \hypo{\vphantom{\Gamma \vdash A}} \infer1{\Gamma \vdash \star}
    \end{prooftree}
    }
    & {
      \begin{prooftree}
        \hypo{\Gamma \vdash A} \hypo{\Gamma \vdash s : A} \hypo{\Gamma
          \vdash t : A} \infer3{\Gamma \vdash \arr s A t}
      \end{prooftree}
      }
    \\
    {
    \begin{prooftree}
      \hypo{\vphantom{\Delta v \vdash \sigma : \Gamma}}
      \infer1{\Delta \vdash \langle \rangle : \emptyset}
    \end{prooftree}
    }
    & {
      \begin{prooftree}
        \hypo{\Delta \vdash \sigma : \Gamma} \hypo{\Gamma \vdash A}
        \hypo{\Delta \vdash t : A \sub \sigma}  \infer3{\Delta \vdash \langle \sigma, x \mapsto
          t \rangle : (\Gamma, x : A)}
      \end{prooftree}
      }\\[1.5em]
    \multicolumn{2}{c} {
    \inferrule{\Gamma \vdash \\ (x : A) \in \Gamma}{\Gamma \vdash x : A}
    }\\[1.5em]
    \multicolumn{2}{c} {
    \inferrule{
      \Delta \vdash_{p} \\
      \Delta \vdash \arr s A t \\
      \Gamma \vdash \sigma : \Delta \\\\
      \supp(s) = \supp(\inc - {\dim(\Delta) - 1} \Delta) \\
      \supp(t) = \supp(\inc + {\dim(\Delta) - 1} \Delta)
    }{
      \Gamma \vdash \Coh \Delta {\arr s A t} \sigma : \arr {s\sub{\sigma}} {A\sub{\sigma}} {t\sub{\sigma}}
    }
    } \\[2em]
    \multicolumn{2}{c} {
    \inferrule{
      \Delta \vdash_p \\
      \Delta \vdash \arr s A t \\
      \Gamma \vdash \sigma : \Delta \\\\
      \supp(s) = \supp(t) = \Delta
    }{
      \Gamma \vdash \Coh \Delta {\arr s A t} {\sigma} : \arr {s\sub{\sigma}} {A\sub{\sigma}} {t\sub{\sigma}}
    }
    }
  \end{tabular}
  \caption{Typing rules for \Catt.}
  \label{fig:typ-rules}
\end{figure}%
With these notions in hand, the typing rules for \Catt are given in \cref{fig:typ-rules}.  All these rules are relatively standard for dependent type theories with the exception of the last two which describe the typing of coherence terms. Both of these latter rules for typing a coherence \(\Coh \Delta {\arr s A t} \sigma\) require that \(\Delta\) must be a pasting context and that \(\arr s A t\) and \(\sigma\) are both well typed. The first rule says that \(s\) must be supported by the source of \(\Delta\) and \(t\) by the target of \(\Delta\). Terms typed with this rule represent compositions. The second rule instead states that \(s\) and \(t\) are \emph{full}, they are supported by the whole context. These terms witness that any two full terms over a pasting diagram should be equivalent. We note that although having two typing rules for a constructor is non-standard, these two rules only differ on their syntactic side condition, and could be combined into a single rule. The two rules are left separate to conform to previous presentations of \Catt.

\subsection{Constructions and Examples}

\mypara{Disc Contexts.}
Among the pasting contexts, we may distinguish the \emph{disc contexts} which play an important role in further constructions. An example of a disc context can be seen in \cref{fig:d3}.

\begin{definition}
  The \(n\)-dimensional \emph{disc context} \(D^n\) has a single \(n\)-dimensional variable $d_n$, and variables \(d_k^-, d_k^+\) for each \(k < n\). We have $d ^\pm _0: \star$, and  \(d_k^\pm : d_{k-1}^- \to d_{k-1}^+\) for all $0 < k \leq n$.
\end{definition}

\noindent
Substitutions out of $D^n$ are special in that they are fully determined by a type of dimension $n$, and a term of that type. That is, given \(A : \Type_\Gamma\) and \(t : \Term_\Gamma\), there is a substitution \(\{A,t\} : D^{\dim(A)} \to \Gamma\), and any substitution from a disc is of this form.

\mypara{Standard Operations.} A given pasting context $\Gamma$ generally gives rise to many different valid coherences.  Among these, there is a particular class of coherences, the \emph{standard} ones, which will play an important role in our definitional equality below.  Intuitively speaking, these are the coherences which compose all of the cells in a pasting diagram ``at once'' instead of first composing sub-diagrams.  They may be defined recursively as follows:

\begin{definition}
  Given a pasting diagram \(\Delta\), we mutually define for all \(n\) the \emph{standard coherence} \(\mathcal{C}_\Delta^n\), the \emph{standard term} \(\mathcal{T}_\Delta^n\), and the \emph{standard type} \(\mathcal{U}_\Delta^n\):
  \begin{align*}
    \mathcal{C}_\Delta^n &= \Coh \Delta {\mathcal{U}_\Delta^n} {\id}\\
    \mathcal{T}_\Delta^n &=
                           \begin{cases}
                             d_n &\text{when \(\Delta\) is a disc}\\
                             \mathcal{C}_\Delta^n &\text{otherwise}
                           \end{cases}\\
    \mathcal{U}_\Delta^0 &= \star\\
    \mathcal{U}_\Delta^{n+1} &= \arr {\mathcal{T}_{\bound n \Delta}^n \sub {\inc - n \Delta}} {\mathcal{U}_\Delta^n} {\mathcal{T}_{\bound n \Delta}^n \sub {\inc + n \Delta}}
  \end{align*}
  The standard type takes the standard term over each boundary of \(\Delta\), includes these all back into \(\Delta\) and assembles them into a type. When \(n = \dim(\Delta)\) we will refer to the standard coherence as an \emph{standard composite}. 
\end{definition}

\mypara{First Examples.} We start with some basic examples of categorical operations. The following pasting context contains the composable pair of morphisms $f:x \to y$ and $g:y \to z$:
\[ \Gamma = x : \star, y : \star, f : \arr x \star y, z : \star, g : \arr y \star z\]
We can use this to form the composite of \(f\) and \(g\), as a term in $\Gamma$:
\[ f \cdot g := \Coh \Gamma {\arr x \star z} \id : \arr x \star z\]
This satisfies the support conditions for the first typing rule for coherences, since $x$ is full over the context $x : \star$, and similarly for $z$.
Given a variable \(t\) of type \(\star\), we can form the identity coherence associated to \(t\) (not to be confused with the identity substitution) as follows:
\[ \mathbbm{1}_t := \Coh {(x : \star)} {\arr x \star x} {\langle x \mapsto t \rangle} : \arr t \star t \]
This can be typed using the second typing rule for coherences.

\begin{figure}
  \centering
\[\begin{tikzcd}
        {d_0^-} && {d_0^+}
        \arrow[""{name=0, anchor=center, inner sep=0}, "{d_1^+}", curve={height=-30pt}, from=1-1, to=1-3]
        \arrow[""{name=1, anchor=center, inner sep=0}, "{d_1^-}"', curve={height=30pt}, from=1-1, to=1-3]
        \arrow[""{name=2, anchor=center, inner sep=0}, "{d_2^-}", shift left=5, shorten <=8pt, shorten >=8pt, Rightarrow, from=1, to=0]
        \arrow[""{name=3, anchor=center, inner sep=0}, "{d_2^+}"', shift right=5, shorten <=8pt, shorten >=8pt, Rightarrow, from=1, to=0]
        \arrow["{d_3}", shorten <=4pt, shorten >=4pt, triple, from=2, to=3]
\end{tikzcd}\]
  \caption{The disc context \(D^3\).}
  \label{fig:d3}
\end{figure}

The substitution part of the coherence allows us to form compound operations. For example, the following syntax represents a term in the context \(\Delta = x : \star, y : \star, f : \arr x \star y\):
\[ f \cdot \mathbbm{1}_y := \Coh \Gamma {\arr x \star z} {\langle f \mapsto f, g \mapsto \mathbbm{1}_y \rangle} : \arr x \star y \]
Here we omit the lower-dimensional components of the substitution as they can be inferred, a technique that we will use repeatedly. Building on this last example we can form the unitor \(\rho_f\) witnessing the unitality of \(\mathbbm{1}\):
\[ \rho_f := \Coh \Delta {\arr {f \cdot \mathbbm{1}_y} {\arr x \star y} {f}} \id : \arr {f \cdot \mathbbm{1}_y} {\arr x \star y} {f}  \]
and similarly we can generate the associator as a coherence over the context \(\Theta = \Gamma , w : \star, h : \arr z \star w\).
\begin{align*}
  \alpha_{f,g,h} := \Coh \Theta {{}&\arr {(f \cdot g) \cdot h} {\arr x \star w} {f \cdot (g \cdot h)}} \id \\
  &: \arr {(f \cdot g) \cdot h} {\arr x \star w} {f \cdot (g \cdot h)}
\end{align*}

We note that both the composition and identity examples are in fact examples of standard coherences, as can be seen by straightforward calculation.

More generally we can define the \textit{identity term} for any \(n\)\-dimensional term \(s\) of type \(A\) as follows:
\[ \mathbbm{1}\sub{\{A,s\}} = \mathcal{C}_{D^n}^{n+1}\sub{\{A,s\}} \]

\subsection{Trees}
\label{sec:trees}

\noindent
Pasting diagrams like the ones used in \Catt have a well known correspondence to finite planar rooted trees (henceforth, simply ``trees'') \cite{street2000petit}. Our insertion construction (see \cref{sec:insertion}) is more easily defined using the representation of a pasting context as a tree, and so we pause to work out this correspondence here.

\mypara{Suspension.}
In topology, given a space \(X\), we can construct a new space \(\Sigma X\) which is obtained by stretching \(X\) into a cylinder and then collapsing each of the top and bottom ``caps'' to a point. More formally, the suspension is a quotient of the product space \(X \times [0,1]\), where everything in \(X \times \{0\}\) is identified, and everything in \(X \times \{1\}\) is identified. This construction may also be described simply in terms of paths: the space \(\Sigma X\) has two points with a path between these points for each element of \(X\). As an example, suspending a circle yields a sphere with the original circle embedded as the equator.  Each point of the circle then determines a meridian, and this motivates the convention of calling the two additional points ``North'' and ``South''.

An analogue of the suspension operation exists in the theory \Catt~\cite{benjamin2019suspension}. As with many constructions in \Catt, it is mutually inductively defined on all pieces of syntax.
\begin{itemize}
\item For context \(\Gamma\), its suspension \(\Sigma\Gamma\) has two new variables \(N : \star, S : \star\) (\(N\) for north and \(S\) for south), as well as a variable \(x : \Sigma(A)\) for each \(x : A \in \Gamma\).
\item For type \(A : \Type_\Gamma\), its suspension \(\Sigma A : \Type_{\Sigma(\Gamma)}\) is given by \(\Sigma \star = \arr N \star S\) and \(\Sigma(\arr s A t) = \arr {\Sigma s} {\Sigma A} {\Sigma t}\). Note that this raises the dimension of the type by 1.
\item For term \(s : \Term_\Gamma\), its suspension \(\Sigma s : \Term_{\Sigma(\Gamma)}\) is defined by \(\Sigma x = x\) for variables \(x \in \Gamma\), and \(\Sigma(\Coh \Delta A \sigma) = \Coh {\Sigma\Delta} {\Sigma A} {\Sigma \sigma} \).
\item For substitution \(\sigma : \Delta \to \Gamma\), the suspension \(\Sigma \sigma : \Sigma \Delta \to \Sigma \Gamma\) sends \(N\) to \(N\), \(S\) to \(S\) and \(x\) to \(\Sigma t\) for each \(x \mapsto t \in \sigma\).
\end{itemize}
We note that for \(\Sigma(\Coh \Delta A \sigma)\) to be well typed we must have that \(\Sigma \Delta\) is a pasting diagram. This is in fact the case whenever \(\Delta\) is itself a pasting diagram. One can additionally show that suspension forms a functor $\Sigma : \Catt \to \Catt$ on the category of contexts.

\begin{example}
  The suspension of \(x: \star, y : \star, f : \arr x \star y, z : \star, g : \arr y \star z\), is the following context:
\[\begin{tikzcd}
    N && S
    \arrow[""{name=0, anchor=center, inner sep=0}, "z", curve={height=-25pt}, from=1-1, to=1-3]
    \arrow[""{name=1, anchor=center, inner sep=0}, "x"', curve={height=25pt}, from=1-1, to=1-3]
    \arrow[""{name=2, anchor=center, inner sep=0}, "y"{description}, from=1-1, to=1-3]
    \arrow["g", shorten <=4pt, shorten >=4pt, Rightarrow, from=2, to=0, pos=.4]
    \arrow["f", shorten <=4pt, shorten >=4pt, Rightarrow, from=1, to=2, pos=.4]
  \end{tikzcd}
\]
and the suspension of the 1-composition operation gives the vertical composition of 2 cells.
\end{example}

\begin{lemma}
  \label{lem:susp-standard}
  For all \(n\), \(\Sigma D^n \equiv D^{n+1}\). Furthermore, for all pasting contexts \(\Delta\):
  \[ \Sigma\mathcal{C}_\Delta^n \equiv \mathcal{C}_{\Sigma\Delta}^{n+1} \quad \Sigma\mathcal{T}_\Delta^n \equiv \mathcal{T}_{\Sigma\Delta}^{n+1} \quad \Sigma\mathcal{U}_\Delta^n \equiv \mathcal{U}_{\Sigma\Delta}^{n+1} \]
\end{lemma}

\mypara{Wedge Sum.}
In topology, the wedge sum of two pointed spaces \((X, x)\), \((Y , y)\) is the disjoint union of \(X \coprod Y\), with \(x\) identified with \(y\). This may be realized as a colimit of the following diagram:
\[\begin{tikzcd}
    X && Y \\
    & \bullet
    \arrow["x", from=2-2, to=1-1]
    \arrow["y"', from=2-2, to=1-3]
  \end{tikzcd}
\]
A similar construction may be made for pasting contexts.  We note first that it is a basic consequence of the their definition that every pasting context begins with a variable of type $\star$.  Now, given $\Gamma = \Gamma', (x : \star), \Gamma''$ where $\Gamma''$ contains no variables of type $\star$ (i.e., the variable $x$ is the \emph{last} variable of this type) and $\Delta = (y : \star) , \Delta'$, we may define
\[ \Gamma \vee \Delta := \Gamma', (x : \star) , \Gamma'', \Delta'[x/y] \]
where $\Delta'[x/y]$ denotes the result of substituting $x$ for $y$ in all the types which appear in $\Delta'$.  One easily checks that the result is again a pasting diagram.  Moreover, this construction
has an obvious extension to multiple pasting contexts, which we write as:
\[ \bigvee\limits_{i = 1}^n \Gamma_i = \Gamma_1 \vee \Gamma_2 \vee \dots \vee \Gamma_n\]%
\begin{example}
  We consider \(D^2 \vee D^2\). This glues two 2-discs at a point and yields a context of the following form:
\[\begin{tikzcd}
    \bullet & \bullet & \bullet
    \arrow[""{name=0, anchor=center, inner sep=0}, curve={height=-12pt}, from=1-1, to=1-2]
    \arrow[""{name=1, anchor=center, inner sep=0}, curve={height=12pt}, from=1-1, to=1-2]
    \arrow[""{name=2, anchor=center, inner sep=0}, curve={height=-12pt}, from=1-2, to=1-3]
    \arrow[""{name=3, anchor=center, inner sep=0}, curve={height=12pt}, from=1-2, to=1-3]
    \arrow[shorten <=3pt, shorten >=3pt, Rightarrow, from=1, to=0]
    \arrow[shorten <=3pt, shorten >=3pt, Rightarrow, from=3, to=2]
  \end{tikzcd}
\]
We note that this is a pasting diagram which can be used to define the horizontal composition operation on  2-cells.
\end{example}

To simplify definitions of substitutions between wedge sums of pasting contexts, we will write substitutions diagrammatically by specifying the individual components. Indeed given substitutions \(\sigma : \Gamma \to \Gamma'\) and \(\tau : \Delta \to \Delta'\), such that $\sigma$ sends the last terminal $\star$\-typed variable of $\Gamma$ to that of $\Gamma'$, and $\tau$ sends the initial variable of $\Delta$ to that of $\Delta'$, one sees easily that there is a substitution \(\sigma \vee \tau : \Gamma \vee \Delta \to \Gamma' \vee \Delta'\) which we will depict as follows:
\[\begin{tikzcd}[column sep=tiny, row sep=7pt]
    {\Gamma'} & \vee & {\Delta'} \\
    \\
    \Gamma & \vee & \Delta
    \arrow["{\sigma}", from=3-1, to=1-1, pos=.4]
    \arrow["{\tau}", from=3-3, to=1-3, pos=.4]
  \end{tikzcd}
\]

\mypara{Tree Contexts.}
We now have the machinery needed to define the context generated from a tree. Our definition of tree will be based on lists which we will write in square bracket notation $[x_1,\dots,x_n]$. We also use common list notations such as \(\_\doubleplus\_\) for concatenating two lists, \verb|[]| for the empty list, and \(n :: ns\) for the list with first element \(n\) and tail given by the list \(ns\).

We these conventions, we may define trees inductively as follows:
\begin{definition}
  A \emph{planar rooted tree} is  a (possibly empty) list of planar rooted trees.
\end{definition}

\noindent
Subtrees of a tree can be indexed by a list of natural numbers \(P\), giving a subtree \(T^P\), by letting \(T^{\verb|[]|} = T\) and \(T^{k::P} = {(T_k)}^P\) if \(T = [T_1, \dots, T_n]\).

Each tree generates a context, using the constructions of the previous subsections.

\begin{definition}
  For a tree \(T = [T_1,\dots,T_n]\), the context $\lfloor T \rfloor$ generated from it is given by:
  \[\lfloor T \rfloor := \bigvee\limits_{i = 1}^n \Sigma\lfloor T_i \rfloor\]
  Here we understand the convention that when \(T\) is the empty list, we get a singleton context of the form \(\_ : \star\). We will commonly abuse notation and omit the \(\lfloor - \rfloor\) operator and use trees as contexts when it will not cause confusion.
\end{definition}

By a simple induction using the properties of suspensions and wedge sums, we get that any context generated from a tree is a pasting context. The stronger result holds that \(\lfloor - \rfloor\) is an isomorphism between trees and pasting diagrams, though we omit the proof here as it will not be needed for the definition of insertion. Next we give some simple definitions on trees that will be needed later.

\begin{definition}
\label{def:treetrunk}
The dimension of a tree \(\dim(T)\) is \(0\) if \(T\) is empty or \(1 + \max_k{\dim(T_k)}\) if \(T = [T_1,\dots,T_n]\). For a tree \(T\), its \emph{trunk height}, \(\th(T)\), is \(1 + \th(T_1)\) if \(T = [T_1]\) and \(0\) otherwise. A tree is \emph{linear} if its trunk height equals its dimension.
\end{definition}

\begin{figure}
  \centering
  \begin{equation*}
      \begin{tikzpicture}[every node/.style={scale=0.6},baseline=(x11.base)]
                          \node [on grid](x01) {$\bullet$};
                          \node [above=0.5 of x01, on grid] (x11) {$\bullet$};
                          \node [above left=0.5 and 0.25 of x11, on grid] (x21) {$\bullet$};
                          \node [above right=0.5 and 0.25 of x11, on grid] (x22) {$\bullet$};
                          \draw (x01.center) to (x11.center);
                          \draw (x11.center) to (x21.center);
                          \draw (x11.center) to (x22.center);
                        \end{tikzpicture}
                        \qquad
                        \begin{tikzcd} [ampersand replacement=\&]
        {x} \&\& {y}
        \arrow[""{name=0, anchor=center, inner sep=0}, "{g}"{pos=0.7,description}, from=1-1, to=1-3]
        \arrow[""{name=1, anchor=center, inner sep=0}, "{f}"', curve={height=18pt}, from=1-1, to=1-3]
        \arrow[""{name=2, anchor=center, inner sep=0}, "{h}", curve={height=-18pt}, from=1-1, to=1-3]
        \arrow["{\beta}", shorten <=2pt, shorten >=2pt, Rightarrow, from=0, to=2]
        \arrow["{\alpha}", shorten <=2pt, shorten >=2pt, Rightarrow, from=1, to=0]
      \end{tikzcd}\qquad
        \vcenter{\hbox{\raisebox{-5pt}{\(x\)}{\((f\raisebox{5pt}{\(\alpha\)}g\raisebox{5pt}{\(\beta\)}h)\)}\raisebox{-5pt}{\(y\)}}}
                      \end{equation*}
                      \caption{  \label{fig:tree-example}
Example tree and identity labelling.}
\end{figure}

Under the bijection between trees and pasting contexts, a substitution $\sigma : \lfloor T \rfloor \to \Gamma$ from the context associated to $T$ to an arbitrary context $\Gamma$ may be represented by an appropriate type of \emph{labellings} of $T$ which we now define:

\begin{definition}[Tree Labelling]
  A labelling \(L : T \to \Gamma\) from a tree \(T = [T_1,\dots,T_n]\) to \(\Gamma\) is the following data:
  \[\vcenter{\hbox{\raisebox{-5pt}{\(t_0\)}{\(L_1\)}\raisebox{-5pt}{\(t_1\)}}}\ \cdots\ \vcenter{\hbox{\(L_n\)\raisebox{-5pt}{\(t_n\)}}}\]
  where each \(t_i\) is a term of \(\Gamma\) and each \(L_i : T_i \to \Gamma\) is itself a labelling of $T_i$ in $\Gamma$. The terms \(t_i\) label the 0-dimensional variables of the tree, with the labellings \(L_i\) assigning terms recursively to the higher dimensional variables of the tree. Every tree has an identity labelling \(\id_T : T \to T\).
\end{definition}

\begin{example}
  \cref{fig:tree-example} shows a graphical representation of the tree \verb|[[[][]]]| and the context it generates. It also gives the identity labelling on this tree.
\end{example}

\subsection{\Catt with Equality Rules}
\label{sec:equality}

\noindent
We conclude this section by extending $\Catt$ with a definitional equality determined by a set of \emph{equality generators} $\mathcal R$.  We denote the resulting theory by $\Catt_{\mathcal R}$. Formally, each generator $R \in \mathcal{R}$ is given by a triple $R =(\Gamma,s,t)$ where $\Gamma$ is a context and $s,t \in \Term_\Gamma$.  In what follows, we identify certain properties a set of generators $\mathcal{R}$ might enjoy which endow the resulting definitional equality with useful meta-theoretic behavior. We then give some preliminary examples. Our technical development in this section is driven by the approach taken in our formalisation.

To begin, we fix a set $\mathcal{R}$ of equality generators and add inductively defined equality judgments 
\begin{alignat*}{2}
  &\Gamma \vdash s = t &\qquad&\text{Terms \(s, t : \Term_\Gamma\) are equal.}\\
  &\Gamma \vdash A = B &&\text{Types \(A, B : \Type_\Gamma\) are equal.}\\
  &\Gamma \vdash \sigma = \tau &&\text{Substitutions \(\sigma, \tau: \Delta \to \Gamma\) are equal.}
\end{alignat*}
These will be defined mutually inductively alongside the typing rules. Note that the definitional equality is not fibred over types, as it was not necessary for this development, and this setup allowed development of the formalisation to proceed~\cite{alex_rice_2024_11149192}. We also add the following new typing rule, named the \textit{conversion rule}:
\[ \inferrule{\Gamma \vdash s : A \\ \Gamma \vdash A = B}{\Gamma \vdash s : B}\]
\begin{figure}
  \centering
  \begin{mathpar}
    \inferrule{x \in \Gamma}{\Gamma \vdash x = x} \and
    \inferrule{\Gamma \vdash s = t}{\Gamma \vdash t = s} \and
    \inferrule{\Gamma \vdash s = t \\ \Gamma \vdash t = u}{\Gamma \vdash s = u} \and
    \inferrule{\Delta \vdash A = B \\ \Gamma \vdash \sigma = \tau}{\Gamma \vdash \Coh \Delta A \sigma = \Coh \Delta B \tau} \and
    \inferrule{ }{\Gamma \vdash \star = \star} \and
    \inferrule{\Gamma \vdash s = s' \\ \Gamma \vdash t = t' \\ \Gamma \vdash A = A'}{\Gamma \vdash \arr s A t = \arr {s'} {A'} {t'}}\and
    \inferrule{ }{\Gamma \vdash \langle  \rangle = \langle  \rangle}\and
    \inferrule{\Gamma \vdash \sigma = \tau \\ \Gamma \vdash s = t}{\Gamma \vdash \langle \sigma, x \mapsto s \rangle = \langle \tau, x \mapsto t \rangle}
  \end{mathpar}
  \caption{  \label{fig:def-eq}
Structural rules for definitional equality.}
\end{figure}%
Finally, in addition to the structural rules given in \cref{fig:def-eq}, we add a family of rules:
\[ \inferrule{(\Gamma,s,t) \in \mathcal{R} \\ \Gamma \vdash s : A}{\Gamma \vdash s = t}\]
These rules are deliberately asymmetric; only the left hand side requires a proof of validity. Preempting \cref{sec:reduction}, this is because the equalities we use in our theories will take the form of a reduction, where the right hand side will be constructed from the left hand side of the equation.
We refer to the equality relation $=$ defined by these rules as \emph{definitional equality}.

We now identify some attractive properties that the equality rules \(\mathcal{R}\) can satisfy which make the resulting type theory well-behaved.
\mypara{Weakening Condition.}
  \(\mathcal{R}\) has the weakining condition if for all \((\Gamma,s,t) \in \mathcal{R}\) and \(A : \Type_\Gamma\) we have:
  \[((\Gamma, x : A), s, t) \in \mathcal{R}\]
  Note that even though \(s\) and \(t\) are terms over \(\Gamma\), they can be weakened to terms over \(\Gamma, A\), which we do not write explicitly here.

  This condition allows us to show that all equality and typing is preserved by context extension. It also gives us an easy proof that the identity substitution is well typed.

\mypara{Substitution Condition.}
  \(\mathcal{R}\) has the substitution condition if for all \((\Gamma,s,t) \in \mathcal{R}\) and \(\sigma : \Gamma \to \Delta\) with \(\Delta \vdash \sigma : \Gamma\) we have:
  \[(\Delta, s \sub \sigma, t \sub \sigma) \in \mathcal{R}\]
Note here that the typing precondition on \(\sigma\) is with respect to the typing generated from the rule set \(\mathcal{R}\). If \(\mathcal{R}\) satisfies the substitution condition then more generally we have that typing and equality is preserved by substitution. We also get that given \(\Delta \vdash \sigma = \tau\) and \(s : \Term_\Gamma\) that \(\Delta \vdash s \sub \sigma = s \sub \tau\), though this does not actually require the substitution condition.

  If the set of rules $\mathcal{R}$ satisfies the weakening and substitution conditions, then there is a well-defined quotient category of well-typed contexts and substitutions which we will denote (as a slight abuse of notation) by \(\mathsf{Catt}_{\mathcal{R}}\). Lastly we note that the substitution condition implies the weakening condition by using the weakening substitution \(\Gamma \to \Gamma , A\), but in the formalisation it was convenient for them to be given separately.

\mypara{Suspension Condition.}
  We say that \(\mathcal{R}\) has the suspension condition if \((\Gamma,s,t) \in \mathcal{R}\) implies:
  \[(\Sigma \Gamma, \Sigma s, \Sigma t) \in \mathcal{R}\]
  This is sufficient to show that typing and equality is respected by suspension.

  \begin{definition}
    A set of equality generators \(\mathcal{R}\) is \emph{tame} is it satisfies the weakening, substitution, and suspension conditions.
  \end{definition}
  In any tame theory, it can be shown that tree labellings and substitutions between wedge sums can be well typed, and it can also be shown that the standard constructions (type, term, and coherence) are valid.

\mypara{Support Condition.}
  \(\mathcal{R}\) has the support condition if all \((\Gamma,s,t) \in \mathcal{R}\) with \(\Gamma \vdash s : A\) we have \(\supp(s) = \supp(t)\). Unsurprisingly, this condition being true implies all equalities preserve support.

  While this rule may at first appear obvious to show, it turns out to be not quite so trivial. Despite knowing that \(s\) is valid, we have no guarantee that it is well behaved with respect to support, as it could contain equalities that do not preserve support. We therefore give the following lemma and proof strategy, which follows the method used in \cite{StrictUnits} to show preservation of support. We first define a set:
  \[\mathcal{R}_{\mathsf{s}} = \{(\Gamma, s, t) \in \mathcal{R}\ |\ \supp(s) = \supp(t)\}\]
  This generates a new type theory \(\Catt_{\mathcal{R}_\mathsf{s}}\). For clarity we let this type theory have judgments of the form \(\vdash_{\mathsf{s}}\).
  \begin{lemma}
    Suppose for all \((\Gamma,s,t) \in \mathcal{R}\) such that \(\Gamma \vdash_{\mathsf{s}} s : A\) for some \(A : \Type_\Gamma\) we have that \(\supp(s) = \supp(t)\). Then \(\mathcal{R}\) satisfies the support condition.
  \end{lemma}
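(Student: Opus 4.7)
The plan is to prove simultaneously that, under the hypothesis of the lemma, the theories \(\Catt_{\mathcal{R}}\) and \(\Catt_{\mathcal{R}_\mathsf{s}}\) derive exactly the same judgments. Since \(\mathcal{R}_\mathsf{s} \subseteq \mathcal{R}\), the inclusion of \(\Catt_{\mathcal{R}_\mathsf{s}}\) into \(\Catt_{\mathcal{R}}\) is immediate; the content is the reverse direction. Once this equivalence is in hand, the support condition follows at once: given any \((\Gamma,s,t)\in\mathcal{R}\) with \(\Gamma \vdash s : A\), we transfer this to \(\Gamma \vdash_{\mathsf{s}} s : A\) and apply the hypothesis to conclude \(\supp(s) = \supp(t)\).

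To establish the reverse inclusion, I would perform a mutual induction on the height of derivations in \(\Catt_{\mathcal{R}}\), showing that every typing or equality judgment derivable there is also derivable with \(\vdash_{\mathsf{s}}\). All the structural rules (for contexts, types, substitutions, variables, coherences, the symmetry/transitivity/congruence rules of \cref{fig:def-eq}, and the conversion rule) transfer directly by applying the induction hypothesis to their premises. The one interesting case is an application of the rule
\[
\inferrule{(\Gamma,s,t)\in\mathcal{R} \\ \Gamma \vdash s : A}{\Gamma \vdash s = t}
\]
Here, from the derivation of \(\Gamma \vdash s : A\) in \(\Catt_{\mathcal{R}}\) the induction hypothesis gives \(\Gamma \vdash_{\mathsf{s}} s : A\); the hypothesis of the lemma then yields \(\supp(s) = \supp(t)\), so \((\Gamma,s,t)\in\mathcal{R}_\mathsf{s}\), and the same equality rule now applies in \(\Catt_{\mathcal{R}_\mathsf{s}}\) to produce \(\Gamma \vdash_{\mathsf{s}} s = t\).

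The main subtlety, and where the argument could easily be mis-organised, is the interlocking of typing and equality: the conversion rule forces typing to depend on equality, while the rule above forces equality to depend on typing. It is essential that both families of judgments be handled in a single mutual induction rather than sequentially, because otherwise one cannot discharge the premise needed to apply the hypothesis of the lemma at the critical step. Once this mutual structure is set up correctly, the rest of the proof is routine case analysis on the final rule of each derivation, and the support condition for \(\mathcal{R}\) drops out as the immediate corollary described above.
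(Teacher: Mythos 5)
Your proof is correct and takes essentially the approach the paper has in mind (it does not spell out the proof, deferring to the method of the \(\Cattsu\) paper, but the setup with \(\mathcal{R}_\mathsf{s}\) makes the intended argument clear). You correctly identify the crux: a mutual induction showing that \(\Catt_{\mathcal{R}}\) and \(\Catt_{\mathcal{R}_\mathsf{s}}\) prove the same judgments under the hypothesis, with the only interesting case being the equality-generator rule, whose typing premise is handled by the induction hypothesis before the lemma's hypothesis converts membership in \(\mathcal{R}\) to membership in \(\mathcal{R}_\mathsf{s}\); and you correctly flag that the typing-equality interlock forces the induction to be mutual rather than sequential.
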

  Using this lemma, we can make any constructions in \(\mathcal{R}\) in \(\Catt_{\mathcal{R}_{\mathsf{s}}}\) which will automatically give us that certain equalities preserve support. We will see this later with insertion, where we will show that the insertion operation is valid in any theory satisfying the appropriate conditions.

\mypara{Conversion Condition.}
  The last condition is the conversion condition that states that if \((\Gamma,s,t) \in \mathcal{R}\) then \(\Gamma \vdash s : A\) implies \(\Gamma \vdash t : A\). This along with the support condition is enough to show that equality preserves typing.

We can immediately see that \(\Catt = \Catt_\emptyset\), and since \(\emptyset\) trivially satisfies the above conditions, all the results above hold for \(\Catt\) itself.


\mypara{Disc Removal.}
We give two examples of equality rules from \Cattsu \cite{StrictUnits} which will be reused for \Cattsua. The first trivialises unary compositions.
\begin{definition}[Disc removal]
  Recall that any substitution from a disc is of the form \(\{A,t\}\) for some term \(t\) and type \(A\). Disc removal equates the terms \(\mathcal{C}_{D^n}^n\sub{\{A,s\}}\) and \(s\) in context \(\Gamma\) for any \(n\), \(s : \Term_\Gamma\), and \(A : \Type_\Gamma\). In other words, disc removal gives us equalities of the following form, after unwrapping the constructions above:
  \[\Gamma \vdash \Coh {D^n} {U_{D^n}^n} {\{A, s\}} = s\]
\end{definition}

\noindent
This can be intuitively understood as saying ``the term $s$ is equal to $(s)$, the unary composite of $s$''.

Disc removal gives the property that \(\mathcal{C}_\Delta^n = \mathcal{T}_\Delta^n\) for \(n > 0\), effectively removing the need to  differentiate between the two when working up to definitional equality.

\mypara{Endo-Coherence Removal.}
The second equality rule simplifies a class of terms called ``endo-coherences''. These are terms of the following form:
\[ \Coh \Delta {\arr s A s} \sigma\]
These are ``coherence laws'' that can be understood intuitively as saying ``the term $s[\sigma]$ is equal to the term $s[\sigma]$''. But we already have a canonical way to express that --- the identity on $s[\sigma]$. This inspires the following reduction.

\begin{definition}[Endo-coherence removal]
  Endo-coherence removal is the following class of equalities:
  \[ \Gamma \vdash \Coh \Delta {\arr s A s} \sigma = \mathbbm{1}\sub{\{A \sub \sigma, s \sub \sigma\}}\]
  for all \(\Delta\), \(s\), \(A\), and \(\sigma\).
\end{definition}

It can be checked (using proofs from \cite{StrictUnits}) that both disc removal and endo-coherence removal satisfy all the conditions listed above. By defining the rule-set \[\su = \text{disc removal} \cup \text{endo-coherence removal} \cup \text{pruning}\] the type theory \(\Cattsu\) is recovered. This establishes that \Cattsu is part of the general schema presented in this section.

\subsection{Models of \texorpdfstring{\(\Catt_{\mathcal{R}}\)}{CattR}}
\label{sec:models-cattr}

The models of \(\Catt_{\mathcal{R}}\) are defined in the same way as the models of \Cattsu, and many of the results proved in \cite{StrictUnits} apply directly to well-behaved variants of \(\Catt_{\mathcal{R}}\). For completeness, we summarise these results here. Recalling the definition of the syntactic category \(\mathsf{Catt}_{\mathcal{R}}\), we can define the models of \(\Catt_{\mathcal{R}}\).
\begin{definition}
  Let \(\mathcal{R}\) be a tame set of equality generators. A \emph{model} of \(\Catt_{\mathcal{R}}\) is a presheaf on \(\mathsf{Catt}_{\mathcal{R}}\) for which the opposite functor \(\mathsf{Catt}_{\mathcal{R}} \to \mathbf{Set}^{\text{op}}\) preserves globular sums.
\end{definition}

\noindent
We define a \(\mathcal{R}\)-semistrict \(\infty\)-category to be one of these models. When \(\mathcal{R}\) satisfies the support condition,  the \emph{rehydration} algorithm from \cite{StrictUnits} allows a \Catt term \(t\) to be obtained from a \(\Catt_{\mathcal{R}}\) term \(s\) such that \(s =_{\mathcal{R}} t\). This implies that the obvious functor \(\mathsf{Catt} \to \mathsf{Catt}_{\mathcal{R}}\) is full, and hence being \(\mathcal{R}\)-semistrict is a \textit{property} of \(\infty\)\-categories, as one would expect.

Natural transformations between these presheaves correspond to strict functors. Weak functors between \(\infty\)-categories can be constructed from these strict functors by a well-known procedure due to Garner~\cite{GARNER20102269}.

\section{Insertion}
\label{sec:insertion}

\noindent
The semistrict behaviour of our type theory \Cattsua, our adaptation  of \Catt with strict units and associators, is principally driven by a new equality rule called ``insertion''. This equality rule incorporates part of the structure of an argument context into the head context, simplifying the overall syntax of the term.

To be a candidate for insertion, an argument must not occur as the source or target of another argument of the term, and we call such arguments \emph{locally maximal}. Consider the composite \mbox{\(f \cdot (g \cdot h)\)}. This term has two locally maximal arguments, \(f\) and \(g \cdot h\), the second of which is an (standard) coherence. Insertion allows us to merge these two composites into one by ``inserting'' the pasting diagram of the inner coherence into the pasting diagram of the outer coherence. In the case above we will get that the term \(f \cdot (g \cdot h)\) is equal to the ternary composite \(f \cdot g \cdot h\), a term with a single coherence. As the term \((f \cdot g) \cdot h\) also reduces by insertion to the ternary composite, we see that both sides of the associator become equal under insertion. The action of insertion on these contexts is shown in \cref{fig:insertion}.

\begin{figure}
$$
\begin{aligned}
\begin{tikzpicture}
\node (x) at (0,0)  {$x$};
\node (y) at (1,0) {$y$};
\node (z) at (2,0) {$z$};
\draw [->] (x) to node [above, font=\small] {$f$} (y);
\draw [->] (y) to node [above, font=\small] {$g'$} (z);
\begin{scope}[xshift=.5cm, yshift=1.5cm, red]
\draw [rounded corners, fill=red!7, draw=none] (-.25,-.35) rectangle +(2.5,1);
\node (x2) at (0,0)  {$x'$};
\node (y2) at (1,0) {$y'$};
\node (z2) at (2,0) {$z'$};
\draw [->] (x2) to node [above, font=\small] {$g$} (y2);
\draw [->] (y2) to node [above, font=\small] {$h$} (z2);
\end{scope}
\draw [->, thick, red] (1.5,1) to +(0,-.5);
\end{tikzpicture}
\end{aligned}
\quad\leadsto\quad
\begin{aligned}
\begin{tikzpicture}
\node (x) at (0,0)  {$x \vphantom'$};
\node [red] (y) at (1,0) {$x'$};
\node [red] (z) at (2,0) {$y'$};
\node [red] (w) at (3,0) {$z'$};
\begin{scope}[xshift=.5cm, yshift=1.5cm, red]
\draw [rounded corners, fill=white, draw=none] (-.25,-.35) rectangle +(2.5,1);
\end{scope}
\draw [->] (x) to node [above, font=\small] {$f$} (y);
\draw [->, red] (y) to node [above, font=\small] {$g$} (z);
\draw [->, red] (z) to node [above, font=\small] {$h$} (w);
\end{tikzpicture}
\end{aligned}
$$
\caption{\label{fig:insertion}
Insertion acting on the composite \(f \cdot (g \cdot h)\)}
\end{figure}

More generally we consider a coherence term \(\Coh \Delta A \sigma : \Term_\Gamma\), where there is some locally maximal variable \(x : A \in \Delta\) such that \(x \sub \sigma\) is itself an standard coherence \(\mathcal{C}_\Theta^n \sub \tau\). Under certain conditions on the shape of \(\Theta\) and \(\Delta\) (which will be specified in \cref{sec:construction}) we
will construct the following data as part of the insertion operation:
\begin{itemize}
\item The \textit{inserted context} \(\insertion \Delta x \Theta\), obtained by inserting \(\Theta\) into \(\Delta\) along \(x\). The inserted context is a pasting diagram.
\item The \textit{interior substitution} \(\iota : \Theta \to \insertion \Delta x \Theta\), the inclusion of \(\Theta\) into a copy of \(\Theta\) living in the inserted context.
\item The \textit{exterior substitution} \(\kappa : \Delta \to \insertion \Delta x \Theta\), which maps \(x\) to standard coherence over the copy of \(\Theta\), or more specifically \(\mathcal{C}_\Theta^n \sub \iota\), and other locally maximal variables to their copy in the inserted context.
\item The \textit{inserted substitution} \(\insertion \sigma x \tau : \insertion \Delta x \Theta \to \Gamma\), which collects the appropriate parts of \(\sigma\) and \(\tau\).
\end{itemize}
Using this notation, insertion yields the following reduction:
\[\Coh \Delta A \sigma \leadsto \Coh {\insertion \Delta x \Theta} {A \sub \kappa} {\insertion \sigma x \tau}\]
These constructions can be assembled into the following diagram:
\[\begin{tikzcd}
    {D^n} & \Delta \\
    \Theta & {\insertion \Delta x \Theta} \\
    && \Gamma
    \arrow["\kappa", from=1-2, to=2-2]
    \arrow["\iota"', from=2-1, to=2-2]
    \arrow["{\{A,x\}}", from=1-1, to=1-2]
    \arrow["{\{\mathcal{U}_\Theta^n, \mathcal{C}_\Theta^n\}}"', from=1-1, to=2-1]
    \arrow["\sigma", curve={height=-18pt}, from=1-2, to=3-3]
    \arrow["\tau"', curve={height=12pt}, from=2-1, to=3-3]
    \arrow["{\insertion \sigma x \tau}"{description}, from=2-2, to=3-3]
    \arrow["\lrcorner"{anchor=center, pos=0.125, rotate=180}, draw=none, from=2-2, to=1-1]
  \end{tikzcd}
\]
The commutativity of the outer boundary is the equation \(x \sub \sigma = \mathcal{C}_\Theta^n \sub \tau\), one of the conditions for the construction. The commutativity of the inner square is a property of the exterior substitution as stated above. Furthermore, as suggested by the diagram, \(\insertion \Delta x \Theta\) is a pushout, and \(\insertion \sigma x \tau\) is unique map to \(\Gamma\) determined by the universal property of the pushout. This gives a different way to think of the insertion, and gives the intuition that insertion is the result of taking the disjoint union of the two contexts, and gluing together \(x\) in the first with the standard coherence over the second.

\subsection{The Insertion Construction}
\label{sec:construction}

\noindent
We have stated the existence of inserted contexts and associated maps, and claimed that they satisfy a universal property. In this section we give a direct construction of these objects. All constructions will be done by induction over the tree structure of pasting diagrams, which were introduced in \cref{sec:trees}. Trees are more convenient for our technical development than contexts, and so we will work with trees throughout.

To allow us to proceed with inductive definitions we need an inductive version of a locally maximal variable, which we will call a \emph{branch}. We define some properties of branches as follows, illustrated in \cref{fig:leafheight}.

\begin{figure}
$$
\begin{tikzpicture}[scale=.8, yscale=.9]
\draw (0,0) node [dot] {} to (-1,1) node [dot] {} to (0,2) node [dot] {};
\draw (-1,1) to (-2,2) node [dot] {} to (-2,3) node [dot] {};
\draw (0,0) to (1,1) node [dot] {};
\draw (0,0) to (0,-1) node [dot] {};
\node [left] at (-2,2) {$T^P$};
\node [left] at (-2,3) {$\lfloor P \rfloor$};
\draw [|->] (-3,-1) to node [right] {$\bh(P)$} +(0,2);
\draw [|->] (-1.5,-1) to node [right] {$\th(T)$} +(0,1);
\draw [|->] (-4.5,-1) to node [right] {$\lh(P)$} +(0,4);
\end{tikzpicture}
$$
\caption{\label{fig:leafheight} Leaf height, branch height and trunk height.}
\end{figure}

\begin{definition}
  A \emph{branch} \(P\) of a tree \(T\) is a non-empty indexing list for a subtree of \(T\) which is linear. A branch \(P\) of \(T\) gives a locally maximal variable \(\lfloor P \rfloor\) of \(\lfloor T \rfloor\) by taking the unique locally maximal variable of \(\lfloor T_P \rfloor\). Define the \emph{branch height} of \(P\), denoted \(\bh(P)\), to be one less that the length of \(P\) (note that \(P\) is always non-empty). Finally define the \emph{leaf height} \(\lh(P)\) of a path \(P\) as the dimension of~\(\lfloor P \rfloor\). As with trees, we will omit the \(\lfloor - \rfloor\) notation and use a branch as a variable when it is clear.
\end{definition}

For every locally maximal variable, there is some branch representing it, though not necessarily a unique one. Recall \cref{def:treetrunk} of trunk height for a tree. We now give one of the central definitions of the paper, which was also given an informal exposition in the introduction, and illustrated with \cref{fig:insertionintro}.

\begin{definition}[Inserted Tree]
  Given trees \(S\) and \(T\), and a branch \(P\) of \(S\) such that \(\th(T) \geq \bh(P)\), we define the \emph{inserted tree} \(\insertion S P T\) by induction on the length of \(P\):
  \begin{itemize}
  \item Suppose \(P = [k]\) and \(S = [S_1,\dots,S_k,\dots,S_n]\). Then:
    \[\insertion S P T = [S_1,\dots,S_{k-1}] \doubleplus T \doubleplus [S_{k+1},\dots,S_n]\]
  \item Suppose \(P\) has length greater than 1 so that \(P = k :: P'\) and again \(S = [S_1,\dots,S_k,\dots,S_n]\). We note that \(P'\) is a branch of \(S_k\) and by the condition on trunk height of \(T\) we have \(T = [T_1]\). Then:
    \[\insertion S P T = [S_1,\dots,S_{k-1},\insertion {S_k} {P'} {T_1},S_{k+1},\dots,S_n ] \]
  \end{itemize}
  We draw attention to the condition of the trunk height of \(T\) being at least the branch height of \(P\), which is necessary for the induction to proceed. We recall that a tree is identified with a list of trees, and that in the first case of insertion \(T\) is treated as a list, and in the second case \(\insertion {S_k} {P'} {T_1}\) is treated as a single tree which forms one of the subtrees of \(\insertion S P T\).
\end{definition}

We now proceed to define the interior and exterior substitutions, which will be done using the diagrammatic notation introduced in \cref{sec:trees}.

\begin{definition}[Interior Substitution]
  Given \(S, T\) trees, with \(P\) a branch of \(S\) with \(\th(T) \geq \bh(P)\) we define the interior substitution \(\iota_{S,P,T} : T \to \insertion S P T\) by induction on~\(P\).

  \(\bullet\) When \(P = [k]\), \(S = [S_1,\dots,S_k,\dots,S_n]\) we get:
    \[\begin{tikzcd}[column sep=smaller,cramped,row sep=6pt]
        {[S_1,\dots,S_{k-1}]} & \vee & T & \vee & {[S_{k+1},\dots,S_n]} \\
        \\
        && T
        \arrow["\id"{font = \normalsize}, from=3-3, to=1-3]
      \end{tikzcd}
    \]

    \(\bullet\) When \(P = k :: P'\), \(S = [S_1,\dots,S_k,\dots,S_n]\) we get:
    \[\begin{tikzcd}[column sep=smaller, cramped, row sep=6pt]
        {[S_1,\dots,S_{k-1}]} & \vee & {\Sigma \insertion {S_k} {P'} {T_1}} & \vee & {[S_{k+1},\dots,S_n]} \\
        \\
        && {\Sigma T_1}
        \arrow["{\Sigma \iota_{S_k,P',T_1}}"{font = \normalsize}, from=3-3, to=1-3]
      \end{tikzcd}
    \]
We may drop the subscripts on \(\iota\) when they are easily inferred.
\end{definition}

\begin{definition}[Exterior Substitution]
  Given \(S, T\) trees, with \(P\) a branch of \(S\) with \(\th(T) \geq \bh(P)\) we define the exterior substitution \(\kappa_{S,P,T} : S \to \insertion S P T\) by induction on \(P\).

  \(\bullet\) When \(P = [k]\), \(S = [S_1,\dots,S_k,\dots,S_n]\) we get:
    \[\begin{tikzcd}[column sep=smaller,cramped, row sep = 6pt]
        {[S_1,\dots,S_{k-1}]} & \vee & {T} & \vee & {[S_{k+1},\dots,S_n]} \\
        \\
        {[S_1,\dots,S_{k-1}]} & \vee & {\Sigma S_k} & \vee & {[S_{k+1},\dots,S_n]}
        \arrow["{\{\mathcal{U}_T^n, \mathcal{C}_T^n\}}"{font = \normalsize, pos=.4}, from=3-3, to=1-3]
        \arrow["\id"{font = \normalsize}, from=3-1, to=1-1]
        \arrow["\id"{font = \normalsize}, from=3-5, to=1-5]
      \end{tikzcd}\]
    Where we note that by the condition of \(P\) being a branch we have that \(S_k\) is linear and so \(\Sigma \lfloor S_k \rfloor\) is a disc.

    \(\bullet\) When \(P = k :: P'\), \(S = [S_1,\dots,S_k,\dots,S_n]\) we get:
    \[\begin{tikzcd}[column sep=smaller, cramped, row sep = 6pt]
        {[S_1,\dots,S_{k-1}]} & \vee & {\Sigma \insertion {S_k} {P'} {T_1}} & \vee & {[S_{k+1},\dots,S_n]} \\
        \\
        {[S_1,\dots,S_{k-1}]} & \vee & {\Sigma S_k} & \vee & {[S_{k+1},\dots,S_n]}
        \arrow["{\Sigma \kappa_{S_k,P',T_1}}"{font=\normalsize}, from=3-3, to=1-3]
        \arrow["\id"{font=\normalsize}, from=3-1, to=1-1]
        \arrow["\id"{font=\normalsize}, from=3-5, to=1-5]
      \end{tikzcd}\]
Again the subscripts on \(\kappa\) may be dropped where they can be inferred.
\end{definition}

Lastly we define the inserted substitution.

\begin{definition}[Inserted Substitution]
  Given \(S, T\) trees, with \(P\) a branch of \(S\) with \(\th(T) \geq \bh(P)\) and \(\sigma : {S} \to \Gamma\), \(\tau : T \to \Gamma\), we define the \emph{inserted substitution} \(\insertion \sigma P \tau : {\insertion S P T} \to \Gamma\). Without loss of generality, we can assume that \(\sigma\) and \(\tau\) are given by labellings \(L,M\) of \(S\) and \(T\), and that we need to provide a labelling \(\insertion L P M : \insertion S P T \to \Gamma\). Let
  \[ S = [S_1,\dots,S_n] \qquad L = \vcenter{\hbox{\raisebox{-5pt}{\(s_0\)}{\(L_1\)}\raisebox{-5pt}{\(s_1\)}}}\ \cdots\ \vcenter{\hbox{\(L_n\)\raisebox{-5pt}{\(s_n\)}}}\]
  and then proceed by induction on \(P\).

  \(\bullet\) Let \(P = [k]\), and
  \[ T = [T_1,\dots,T_m] \qquad M = \vcenter{\hbox{\raisebox{-5pt}{\(t_0\)}{\(M_1\)}\raisebox{-5pt}{\(t_1\)}}}\ \cdots\ \vcenter{\hbox{\(M_m\)\raisebox{-5pt}{\(t_m\)}}}\]
    Then define \(\insertion L {[k]} M\) to be:
    \[\vcenter{\hbox{\raisebox{-5pt}{\(s_0\)}{\(L_1\)}\raisebox{-5pt}{\(s_1\)}}}\ \cdots\ \vcenter{\hbox{\(L_{k-1}\)\raisebox{-5pt}{\(t_0\)}\(M_1\)\raisebox{-5pt}{\(t_1\)}}}\ \cdots\  \vcenter{\hbox{\(M_m\)\raisebox{-5pt}{\(t_m\)}\(L_{k+1}\)\raisebox{-5pt}{\(s_{k+1}\)}}}\ \cdots\ \vcenter{\hbox{\(L_n\)\raisebox{-5pt}{\(s_n\)}}}\]

    \(\bullet\) Suppose \(P = k :: P'\) so that
    \[T = [T_1] \qquad M = \vcenter{\hbox{\raisebox{-5pt}{\(t_0\)}\(M_1\)\raisebox{-5pt}{\(t_1\)}}}\]
    Define \(\insertion L P M\) as:
    \[\vcenter{\hbox{\raisebox{-5pt}{\(s_0\)}{\(L_1\)}\raisebox{-5pt}{\(s_1\)}}}\ \cdots\ \vcenter{\hbox{{\(L_{k-1}\)}\raisebox{-5pt}{\(t_0\)}\((\insertion {L_k} {P'} {M_1})\)\raisebox{-5pt}{\(t_1\)}\(L_{k+1}\)\raisebox{-5pt}{\(s_{k+1}\)}}}\ \cdots\ \vcenter{\hbox{\(L_n\)\raisebox{-5pt}{\(s_n\)}}}\]
The inserted substitution is then defined as the substitution corresponding to this labelling.
\end{definition}

As we need a lot of data to perform an insertion, we will package it up to avoid repetition.

\begin{definition}
  An \emph{insertion point} is a triple \((S,P,T)\) where \(S\) and \(T\) are trees and \(P\) is a branch of \(S\) with \(\bh(P) \leq \th(T)\) and \(\lh(S) \geq \dim(T)\). An \emph{insertion redex} is a sextuple \((S,P,T,\Gamma,\sigma,\tau)\) where \((S,P,T)\) is an insertion point, \(\sigma : S \to \Gamma\) and \(\tau : T \to \Gamma\) are substitutions, and \(\lfloor P \rfloor\sub \sigma \equiv \mathcal{C}_T^{\lh(P)}\sub\tau\).
\end{definition}

\subsection{Properties of Insertion}
\label{sec:insert-props}

\noindent
Here all typing judgments are taken with respect to a tame set of rules $\mathcal R$ which are taken as implicit. We will state various properties of the constructions in the previous section in any such \(\Catt_{\mathcal{R}}\).

\begin{prop}
  \label{prop:insertion-typing}
  If \((S,P,T)\) is an insertion point then:
  \[ \insertion S P T \vdash \iota : T \quad \insertion S P T \vdash \kappa : S\]
  If further \((S,P,T,\Gamma,\sigma,\tau)\) is an insertion redex then:
  \[ \Gamma \vdash \insertion \sigma P \tau : \insertion S P T\]
\end{prop}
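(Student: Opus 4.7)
The plan is to proceed by simultaneous induction on the length of the branch $P$, following the recursive definitions of $\insertion S P T$, $\iota$, $\kappa$, and the inserted substitution. Throughout, we exploit that $\mathcal{R}$ is tame, which gives us lifting, substitution, and the suspension condition, together with the validity of the unbiased constructions $\mathcal{U}_T^n$ and $\mathcal{C}_T^n$, and the fact that a diagrammatic substitution between wedge sums is well typed whenever its individual components are.

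For $\iota_{S,P,T}$, when $P = [k]$ the construction is the identity on the central summand $T$, which is trivially well typed. When $P = k :: P'$, the central component is $\Sigma \iota_{S_k,P',T_1}$; the inductive hypothesis applied to the insertion point $(S_k,P',T_1)$ provides the typing of the unsuspended substitution, and the suspension condition lifts this to a well-typed map into $\Sigma \insertion{S_k}{P'}{T_1}$. The argument for $\kappa_{S,P,T}$ proceeds analogously. In the base case, $S_k$ is linear (as $P$ is a branch), so $\Sigma S_k$ is a disc of dimension $n = \lh(P)$, and the central component $\{\mathcal{U}_T^n, \mathcal{C}_T^n\}$ is well typed precisely because $\mathcal{C}_T^n$ has type $\mathcal{U}_T^n$ by the validity of the unbiased constructions in any tame theory; the outer components are identities. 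The inductive step again appeals to the suspension condition.

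For the inserted substitution $\insertion \sigma P \tau$, it is convenient to work with the corresponding labellings $L$ and $M$ of $S$ and $T$, and construct $\insertion L P M$ inductively. The sub-labellings inherit typing from $L$ and $M$, so the sole content of the proof is checking that the gluing is consistent, that is, that the 0-dimensional positions where the two labellings meet are sent to definitionally equal terms. In the base case $P = [k]$ this demands $t_0 = s_{k-1}$ and $t_m = s_k$; more generally one needs source and target boundaries of the spliced labellings to agree. The main obstacle lies here, and it is resolved by unpacking the insertion redex condition $P \sub \sigma \equiv \mathcal{C}_T^{\lh(P)} \sub \tau$: iterated source and target operations applied to both sides identify the required boundary terms, because the source and target of $\mathcal{C}_T^n$ are the unbiased terms on the boundary of $T$ (which evaluate under $\tau$ to the endpoints of $M$), while the corresponding iterated boundaries on the left recover exactly the values of $L$ at the positions neighbouring $P$. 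The inductive case reduces to the base case after peeling off one layer of suspension, again relying on the fact that suspension respects typing and commutes with the unbiased constructions as in \cref{lem:susp-unbiased}.
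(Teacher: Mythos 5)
Your proof is correct and takes essentially the same approach as the paper's own (which is only a two-sentence sketch): induction on the branch $P$, typing $\iota$ and $\kappa$ via the standard wedge-sum, suspension, and unbiased constructions available in any tame theory, and using the redex condition $P\sub\sigma \equiv \mathcal{C}_T^{\lh(P)}\sub\tau$ to establish the boundary compatibilities needed for the inserted labelling. You have simply fleshed out the details the paper elides.
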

\begin{proof}
  See \small\texttt{Catt/Tree/Insertion/Typing.agda(28)}.
\end{proof}

\begin{lemma}
  \label{lem:insertion-sub-susp}
  For any insertion redex \((S,P,T,\Gamma,\sigma,\tau)\):
  \[\Sigma (\insertion \sigma P \tau) \equiv \insertion {\Sigma \sigma} {0 :: P} {\Sigma \tau} \]
  Given another substitution \(\mu : \Gamma \to \Delta\), we have:
  \[(\insertion \sigma P \tau) \circ \mu \equiv \insertion {(\sigma \circ \mu)} P {(\tau \circ \mu)} \]
  and the constructions above are well defined.
\end{lemma}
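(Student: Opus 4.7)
The plan is to prove both equations by induction on the branch $P$, working directly with the labelling-based definition of the inserted substitution. Before the induction, I verify that the insertion redices on the right-hand sides are well-formed: for the suspension equation, $0::P$ is a branch of $\Sigma S = [S]$ with $\bh(0::P) = \bh(P)+1 \leq \th(T)+1 = \th(\Sigma T)$, and the redex condition $(0::P)\sub{\Sigma\sigma} \equiv \mathcal{C}_{\Sigma T}^{\lh(P)+1}\sub{\Sigma\tau}$ follows by suspending the original redex equation and applying \cref{lem:susp-unbiased}; for the composition equation, $\bh(P) \leq \th(T)$ is unchanged and the redex equation is preserved by post-composing with $\mu$.

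For the base case $P = [k]$, the labelling $\insertion L P M$ is an explicit concatenation of the component labellings of $L$ and $M$. For the composition part, $-\circ\mu$ is applied termwise across this concatenation, so the equation is immediate. For the suspension part, $\insertion{\Sigma L}{0::[k]}{\Sigma M}$ unfolds one step of the recursive clause into an inserted labelling placed between the new poles $N$ and $S$; this matches exactly what $\Sigma(\insertion L {[k]} M)$ looks like after applying suspension termwise to the concatenated labelling.

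For the inductive case $P = k::P'$ with $P'$ nonempty, we have $T = [T_1]$, and unfolding the recursive clause of the inserted substitution places $\insertion{L_k}{P'}{M_1}$ into the $k$-th slot of the outer labelling. For the composition part, pushing $-\circ\mu$ through this recursion reduces, by the inductive hypothesis applied to $P'$, to the claim on $L_k$ and $M_1$. For the suspension part, $\insertion{\Sigma L}{0::k::P'}{\Sigma M}$ unfolds twice---once through the unique subtree of $\Sigma S$, and once through the $k$-th subtree---leaving an inserted substitution for $P'$ that matches the structure of $\Sigma(\insertion L P M)$ by the inductive hypothesis.

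The conceptual content is light but the bookkeeping is the main obstacle: one must carefully track how wedge-sum decompositions, labelling concatenations, suspension and composition interact, and verify in particular that the terms labelling 0-dimensional variables are handled correctly in both operations. Well-definedness of the right-hand sides follows from \cref{prop:insertion-typing} together with the suspension and substitution conditions on the tame rule set $\mathcal{R}$, which guarantee that $\Sigma\sigma$, $\Sigma\tau$, $\sigma\circ\mu$, and $\tau\circ\mu$ are all well-typed substitutions.
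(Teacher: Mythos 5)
The paper gives no explicit proof of this lemma (it is handled in the accompanying Agda formalization), so there is no official argument to compare against; your approach---induction on the length of $P$, directly mirroring the recursive clauses of the inserted labelling---is the natural one and almost certainly matches what the formalization does.

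One point deserves to be spelled out in the suspension case. After the first unfolding, $\insertion{\Sigma L}{0::P}{\Sigma M}$ exposes, between the two new poles $N$ and $S$, an inserted labelling of the form $\insertion{\Sigma' L}{P}{\Sigma' M}$, where $\Sigma' L$ denotes the \emph{term-wise} suspension of $L$, i.e.\ the labelling $S \to \Sigma\Gamma$ obtained by applying $\Sigma$ to every term of $L$. This is not the same object as the suspended substitution $\Sigma L$, whose underlying tree is $\Sigma S = [S]$ rather than $S$. What closes the inductive step is therefore a commutation fact for the term-wise operation, not the literal statement of the lemma. The inductive hypothesis for $P'$ does yield it---both sides of the IH equation are labellings of the shape ``$N$, inner labelling, $S$'', and stripping the two pole labels gives exactly the term-wise claim at the next level---but your sketch elides this extraction, and a careful write-up should make it explicit, since suspending a substitution and applying suspension term-wise to a labelling are genuinely different operations with different codomain shapes. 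The composition clause has no analogous wrinkle: post-composition with $\mu$ already acts uniformly on terms of the labelling, so the IH applies without any unwrapping.

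It is also worth noting that the two syntactic equalities hold for arbitrary labellings $L : S \to \Gamma$ and $M : T \to \Gamma$: the recursion defining $\insertion L P M$ never consults the redex equation $P\sub{\sigma} \equiv \mathcal{C}_T^{\lh(P)}\sub{\tau}$. The redex hypothesis and the tameness conditions enter only for the ``well defined'' clause, i.e.\ well-typedness of the result via \cref{prop:insertion-typing}, which you correctly isolate to your final paragraph. Your verification of the redex conditions on the right-hand sides is therefore needed only for that part, not for the two identities themselves.
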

\begin{proof}
  See \small\texttt{Catt/Tree/Insertion/Properties.agda(218)}.
\end{proof}

The following lemma about insertion into a disc is crucial for showing the reduction we define in \cref{sec:reduction} agrees with our equality.

\begin{lemma}
  \label{lem:disc-insertion-1}
  Let \(T\) be a tree, \(n \geq \dim (T)\), and \(P\) a branch of \(D^n\) with \(\bh(P) \leq \th(T)\). Then \(\insertion {D^n} P T = T\) and \(\iota_{D^n,P,T} \equiv \id\). Suppose further that \(\sigma : D^n \to \Gamma\) and \(\tau : T \to \Gamma\). Then \(\insertion \sigma P \tau \equiv \tau\).
\end{lemma}
\begin{proof}
  See \small\texttt{Catt/Tree/Insertion/Properties.agda(151)}.
\end{proof}

We next state the required conditions for the universal property of insertion. All of these can be proven by induction on \(P\).
\begin{lemma}
  \label{lem:insertion-subs-comm}
  For all insertion points \((S,P,T)\), the terms \(P \sub \kappa\) and \(\mathcal{C}_T^{\lh(P)} \sub \iota\) are syntactically equal.
  If we extend to an insertion redex \((S,P,T,\Gamma,\sigma,\tau)\) then the following hold:
  \begin{align*}
   \iota_{S,P,T} \circ (\insertion \sigma P \tau) &= \tau
   &
   \kappa_{S,P,T} \circ (\insertion \sigma P \tau) &= \sigma
   \end{align*}
 \end{lemma}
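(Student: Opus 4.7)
The approach is to prove all three assertions by a single induction on the length of the branch $P$, treating the first as a statement about insertion points alone and folding in the redex hypothesis $P\sub{\sigma} \equiv \mathcal{C}_T^{\lh(P)}\sub{\tau}$ only when it is needed for the second and third.

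For the base case $P = [k]$ with $S = [S_1,\dots,S_n]$, the branch condition forces $S_k$ to be linear, so $\Sigma\lfloor S_k\rfloor$ is a disc of dimension $n = \lh(P)$ with $\lfloor P \rfloor$ as its top-dimensional variable. Unpacking the wedge-sum definition of $\kappa$, its action on this disc is $\{\mathcal{U}_T^n, \mathcal{C}_T^n\}$ followed by the inclusion of the middle copy of $T$ into the wedge that forms $\insertion{S}{P}{T}$, and this inclusion is precisely $\iota$; hence $\lfloor P\rfloor\sub{\kappa} \equiv \mathcal{C}_T^n\sub{\iota}$, giving part 1. For parts 2 and 3, I would work at the level of labellings: by the splicing definition of $\insertion{L}{[k]}{M}$, precomposing with $\iota$ recovers the middle labels $M = \tau$, while precomposing with $\kappa$ recovers $L$ on the outer factors immediately. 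The only non-trivial check is the action of $\kappa \circ \insertion{\sigma}{P}{\tau}$ on $\lfloor P\rfloor$, which by part 1 together with the now-established part 2 equals $\mathcal{C}_T^n\sub{\tau}$; this matches $P\sub{\sigma}$ by the redex condition.

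For the inductive step $P = k :: P'$, the trunk-height condition forces $T = [T_1]$, and each of $\insertion{S}{P}{T}$, $\iota$, and $\kappa$ is built as a wedge sum whose middle factor is the suspension of the corresponding construction for $(S_k, P', T_1)$. Part 1 then follows by applying the induction hypothesis inside the suspended middle factor and using functoriality of suspension together with the fact that $\lfloor P \rfloor$ lives in that middle factor. For parts 2 and 3, \cref{lem:insertion-sub-susp} ensures that $\insertion{\sigma}{P}{\tau}$ likewise restricts on the middle factor to the suspension of the inserted substitution built from the appropriate restrictions of $\sigma$ and $\tau$; on the outer factors both equations reduce to identities by construction, and on the middle factor they follow from the induction hypothesis after stripping a suspension. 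The descent of the redex condition from $P$ to $P'$ that is required for the middle factor is immediate from \cref{lem:susp-unbiased}, which gives $\mathcal{C}_T^{\lh(P)} \equiv \Sigma\mathcal{C}_{T_1}^{\lh(P')}$.

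The main obstacle is therefore less conceptual than clerical: correctly tracking how the wedge-sum and suspension presentations of substitutions interact with the splicing definition of $\insertion{L}{P}{M}$, and ensuring the redex hypothesis is invoked exactly at the one point in the base case where it is needed (the action of $\kappa$ on the branch variable after composition). Everything else is forced by functoriality of suspension and of wedge sum, applied componentwise.
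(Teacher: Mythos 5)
Your proposal is correct and follows the same route the paper indicates: the paper states only that ``all of these can be proven by induction on $P$,'' and your induction on the length of $P$, unpacking the base and successor cases via the wedge-sum and suspension presentations of $\kappa$, $\iota$, and the inserted labelling, and invoking the redex hypothesis exactly once (for the action of $\kappa \circ (\insertion{\sigma}{P}{\tau})$ on $\lfloor P\rfloor$), is precisely that argument filled in.
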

 \begin{proof}
   See \small\texttt{Catt/Tree/Insertion/Properties.agda(108,233)}.
 \end{proof}

\noindent
This leads us to the following theorem, proving that insertion arises as a pushout.

\begin{theorem}
  The following diagram is a pushout in \(\mathsf{Catt}_{\mathcal{R}}\) for any insertion point \((S,P,T)\), where \(A\) is the type of \(\lfloor P \rfloor\):
  \[\begin{tikzcd}
      {D^n} & S \\
      T & {\insertion S P T}
      \arrow["\kappa", from=1-2, to=2-2]
      \arrow["\iota"', from=2-1, to=2-2]
      \arrow["{\{A,P\}}", from=1-1, to=1-2]
      \arrow["{\{\mathcal{U}_\Theta^{\lh(P)}, \mathcal{C}_\Theta^{\lh(P)}\}}"', from=1-1, to=2-1]
      \arrow["\lrcorner"{anchor=center, pos=0.125, rotate=180}, draw=none, from=2-2, to=1-1]
    \end{tikzcd}
  \]

\end{theorem}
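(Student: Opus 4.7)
The plan is to verify the two parts of the pushout property separately: commutativity of the square, and the universal property. Commutativity is essentially the content of \cref{lem:insertion-subs-comm}: the top-dimensional components agree because $P\sub{\kappa} \equiv \mathcal{C}_T^{\lh(P)}\sub{\iota}$, and the lower-dimensional components of both composites out of $D^n$ are determined by their endpoints, which must agree because both $P$ (as a locally maximal variable of $S$) and $\mathcal{C}_T^{\lh(P)}$ have the type obtained from the boundary of $T$ included appropriately, and $\kappa, \iota$ both restrict to the same substitution on that boundary.

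For the universal property, suppose $(\sigma : S \to \Gamma,\ \tau : T \to \Gamma)$ is a cocone in $\mathsf{Catt}_{\mathcal{R}}$. The cocone condition, after tracing through the disc substitutions, says exactly that $P\sub{\sigma} = \mathcal{C}_T^{\lh(P)}\sub{\tau}$, so $(S,P,T,\Gamma,\sigma,\tau)$ is an insertion redex. \Cref{prop:insertion-typing} then gives a well-typed substitution $\insertion{\sigma}{P}{\tau} : \insertion{S}{P}{T} \to \Gamma$, and \cref{lem:insertion-subs-comm} shows that it makes both triangles commute. This establishes existence of the mediating map.

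The main obstacle is uniqueness. Here I would argue by induction on the length of $P$, exploiting the fact that a substitution out of a pasting context is determined by its action on the locally maximal variables (with lower-dimensional data forced by typing). The key combinatorial observation is that every locally maximal variable of $\insertion{S}{P}{T}$ either corresponds to a locally maximal variable of $T$ (and is mapped identically by $\iota$ via the inductive definition), or to a locally maximal variable of $S$ distinct from $P$ (and is mapped identically by $\kappa$, whose only non-identity component sends $P$ itself). Hence any $\mu$ satisfying $\kappa \circ \mu = \sigma$ and $\iota \circ \mu = \tau$ is forced on every locally maximal variable, and therefore agrees with $\insertion{\sigma}{P}{\tau}$. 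In the inductive step $P = k :: P'$, one can package this more cleanly: the inserted tree decomposes as a wedge sum, and a substitution out of a wedge sum is determined by its components, which are in turn controlled by $\kappa$ (on the unchanged wedge factors) and by $\Sigma \iota_{S_k,P',T_1}$ (on the recursively inserted factor), reducing uniqueness to the inductive hypothesis applied to $(S_k, P', T_1)$.
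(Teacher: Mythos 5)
Your proof is correct and follows essentially the same route as the paper's (very terse) proof: commutativity and the triangle conditions for the candidate mediating map both come from \cref{lem:insertion-subs-comm}, and uniqueness follows from the joint surjectivity of \(\kappa\) and \(\iota\). Two small remarks. First, the paper phrases the uniqueness argument in terms of \emph{every} variable of \(\insertion S P T\) lying in the image of \(\kappa\) or \(\iota\), which gives uniqueness of any mediating substitution immediately; you restrict to locally maximal variables, which also works but leans on the additional (true, but unstated in the paper) fact that a substitution out of a pasting context is determined up to definitional equality by its values on locally maximal variables. Second, be slightly careful when you say the cocone condition makes \((S,P,T,\Gamma,\sigma,\tau)\) an insertion redex: the redex, as defined, requires the \emph{syntactic} identity \(P\sub\sigma \equiv \mathcal{C}_T^{\lh(P)}\sub\tau\), whereas a cocone in the quotient category \(\mathsf{Catt}_{\mathcal R}\) only furnishes the definitional equality \(P\sub\sigma = \mathcal{C}_T^{\lh(P)}\sub\tau\). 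The inserted substitution is still definable as a syntactic operation and remains well-typed via conversion, and the triangle commutativity holds up to definitional equality, which is all that the pushout in \(\mathsf{Catt}_{\mathcal R}\) requires — but that weakening should be acknowledged rather than asserting the stronger redex condition.
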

\begin{proof}
  All that is left to show after \cref{lem:insertion-subs-comm} is that the inserted substitution is the unique substitution satisfying the commutativity conditions. This is done by realising that each variable of \(\insertion S P T\) is either the image of a variable in \(S\) or~\(T\).
\end{proof}

\subsection{The Type Theory \texorpdfstring\Cattsua{Cattsua}}

\noindent
All the ingredients are now in place to define our principal type theory \Cattsua. We first formally define the insertion rule.

\begin{definition}[Insertion]
  The \emph{insertion rule} says that the following equation holds:
  \[\Gamma \vdash \Coh S A \sigma = \Coh {\insertion S P T} {A \sub \kappa} {\insertion \sigma P \tau}\]
  for all insertion redexes \((S,P,T,\Gamma,\sigma,\tau)\) and types \(A : \Type_S\).
\end{definition}

We now define the set of rules \(\sua\) to be the union of insertion, disc removal, and endo-coherence removal, and let \Cattsua be the type theory generated from these rules. As disc removal and endo-coherence removal satisfy all conditions, it remains to check that insertion also does. The weakening condition is trivial, as insertion does not interact with the ambient context the terms exist in. Substitution and suspension conditions follow quickly from \cref{lem:insertion-sub-susp} and some computation. This leaves support and conversion.

For support we appeal to the proof strategy detailed in \cref{sec:equality}. Suppose
\[\Gamma \vdash \Coh S A \sigma = \Coh {\insertion S P T} {A \sub \kappa} {\insertion \sigma P \tau}\]
is a valid insertion under the insertion rule, and further that \(\Gamma \vdash_{\mathsf{s}} \Coh S A \sigma : B\) (the left hand side of the rule is well typed in \(\Catt_{\mathcal{R}_{\mathsf{s}}}\)). Then, \cref{lem:insertion-subs-comm} holds in \(\Catt_{\mathcal{R}_{\mathsf{s}}}\) which implies \(\supp(\kappa \circ (\insertion \sigma P \tau)) = \supp(\sigma)\), and since \(\kappa\) is full (its support is the entire context), it follows that \(\supp(\insertion \sigma P \tau) = \supp(\sigma)\), as required.

Lastly, conversion follows from \cref{prop:insertion-typing}. It is also necessary to show that the support conditions hold in the generated term, which is done in the formalisation\footnote{\texttt{Catt/Typing/Insertion/Support.agda(214)}}.

The following property holds in \Cattsua:

\begin{theorem}
  \label{lem:standard-type-exterior}
  Let \((S,P,T)\) be an insertion point. Then for all \(n \leq \dim(S) + 1\), \(\mathcal{U}_S^n\sub{\kappa_{S,P,T}} = \mathcal{U}_{\insertion S P T}^n\) and if \(\dim(S) = n\) then \(\mathcal{T}_S^n \sub{\kappa_{S,P,T}} = \mathcal{T}_{\insertion S P T}\).
\end{theorem}
\begin{proof}
  See \small\texttt{Catt/Typing/Insertion/Equality.agda(319)}.
\end{proof}

\begin{cor}
  \label{cor:standard-insert}
  An insertion into an standard coherence is equal to an standard coherence over the inserted context. More specifically:
  \[\Gamma \vdash \Coh {\insertion S P T} {\mathcal{U}_S^n\sub{\kappa}} {\insertion \sigma P \tau} = \mathcal{C}_{\insertion S P T}^n \sub {\insertion \sigma P \tau}\]
   for any insertion redex \((S,P,T,\Gamma,\sigma,\tau)\) and \(n \geq \dim(T)\).
 \end{cor}


\section{A Decision Procedure for \texorpdfstring\Cattsua{Cattsua}}

\noindent
We show that equality for \(\Cattsua\) is decidable and hence type checking is also decidable. This gives an algorithm for checking validity of \(\Cattsua\) terms, which we have implemented and discuss in \cref{sec:implementation}.

Decidability is shown providing a reduction relation \(\leadsto\), such that the symmetric transitive reflexive closure of \(\leadsto\) agrees with equality on the set of valid terms. It is also shown that \(\leadsto\) is terminating, meaning for each term \(t\) we can generate a normal form \(N(t)\), and confluent which implies uniqueness of normal forms. Therefore, equality of two terms \(s\) and \(t\) can be checking syntactic equality of their normal forms \(N(s)\) and \(N(t)\). We begin by introducing a refined notion of equality, which will be essential for proving many of the syntactic properties we need in this section.

\begin{definition}
  Define the \(n\)-bounded equality relation as follows: Let \(\Gamma \vdash s =_n t\) when \(\Gamma \vdash s = t\) with a derivation that only uses rules \((\Delta, s', t') \in \mathcal{R}\) when \(\dim(s') < n\). We further define maximal equality by letting \(\Gamma \vdash \sigma \equiv^{\mathsf{max}} \tau\) when substitutions \(\sigma\) and \(\tau\) are syntactically equal when applied to locally maximal variables, and \(\Gamma \vdash \sigma =^{\mathsf{max}} \tau\) when they are definitionally equal on all locally maximal variables.
\end{definition}

It is clear that bounded equality implies equality. It is also true that maximal equality (of either variety) between valid substitutions implies equality due to conversion. Further, we have that any equal terms of dimension \(n - 1\) are \(n\)-bounded equal and so if \(\Gamma \vdash \sigma \equiv^{\mathsf{max}} \tau\) then it follows that \(\Gamma \vdash \sigma =_{\dim(\sigma)} \tau\). Lastly, if \(\Gamma \vdash \sigma \equiv^{\mathsf{max}} \tau\) and both \(\sigma\) and \(\tau\) are valid in \(\Catt_\emptyset\) then it follows that \(\sigma \equiv \tau\).

\subsection{Reduction for \Cattsua}
\label{sec:reduction}

\noindent
To define a reduction for \Cattsua, we  define a reduction relation on terms, types and substitutions by mutual induction, and write these reductions \(\Gamma \vdash \_ \leadsto \_\) with appropriate pieces of syntax replacing the underscores with the reflexive transitive closure written \(\Gamma \vdash \_ \leadsto^* \_\). The structural rules for the single step reduction are given in \cref{fig:red-rules} to which we add disc removal, endo-coherence removal on non-identities, and insertion where the head term is not an identity or disc and the inserted argument is either a standard composite or an identity. Not allowing head reductions on identities is crucial for termination and the restriction of insertion simplifies the confluence proof and is justified by the following lemma.

\begin{figure}
  \centering
  \begin{mathpar}
    \inferrule*[Right=Cell]{\Delta \vdash A \leadsto B}{\Gamma \vdash \Coh \Delta A \sigma \leadsto \Coh \Delta B \sigma} \and
    \inferrule*[Right=Arg]{\Gamma \vdash \sigma \leadsto \tau}{\Gamma \vdash \Coh \Delta A \sigma = \Coh \Delta A \tau} \and
    \inferrule{\Gamma \vdash s \leadsto s'}{\Gamma \vdash \arr s A t \leadsto \arr {s'} {A} {t}} \and
    \inferrule{\Gamma \vdash A \leadsto A'}{\Gamma \vdash \arr s A t \leadsto \arr {s} {A'} {t}} \and
    \inferrule{\Gamma \vdash t \leadsto t'}{\Gamma \vdash \arr s A t \leadsto \arr {s} {A} {t'}} \and
    \inferrule{\Gamma \vdash \sigma \leadsto \tau}{\Gamma \vdash \langle \sigma, x \mapsto s \rangle \leadsto \langle \tau, x \mapsto s \rangle}\and
    \inferrule{\Gamma \vdash s \leadsto t}{\Gamma \vdash \langle \sigma, x \mapsto s \rangle \leadsto \langle \sigma, x \mapsto t \rangle}
  \end{mathpar}
  \caption{Structural Reduction Rules}
  \label{fig:red-rules}
\end{figure}

\begin{restatable}{lemma}{insertable}
  \label{lem:insertable}
  Let \((S,P,T,\Gamma, \sigma, \tau)\) be an insertion redex. Further suppose that \(a \equiv \Coh S A \sigma\) is not an identity or disc. Then there exists a term \(s\) with:
  \[a \leadsto^* s =_{\dim(a)} \Coh {\insertion S P T} {A \sub {\kappa_{S,P,T}}} {\insertion \sigma P \tau}\]
  even when \(P\sub{\sigma}\) is not a standard composite or identity.
\end{restatable}

\noindent Before giving a proof, we give some definitions and further lemmas.

\begin{definition}
  For tree \(S\) and branch \(P\), let
  \[S \sslash P = \insertion S P {D^{\lh(P)-1}}\]
  and let \(\pi_P = \kappa_{S,P,{D^{\lh(P)-1}}}\).
\end{definition}
\begin{definition}
  Given \(S\) and branch \(P\), if \(2 + \bh(P) \leq \lh(P)\), then there is a branch \(P'\) of \(S \sslash P\), given by the same list as \(P\).
\end{definition}

\begin{lemma}
  \label{lem:pruned-bp}
  If \(S\) has branch \(P\) with \(2 + \bh(P) \leq \lh(P)\), then \(\lfloor P' \rfloor \equiv d_{\lh(P) - 1} \sub {\iota_{S,P,D^{\lh(P) - 1}}} \). If \((S,P,T)\) is an insertion point, we further get that \(\insertion {(S \sslash P)} {P'} T = \insertion S P T\) and \(\pi_P \circ \kappa_{S \sslash P,P',T} =^{\mathsf{max}} \kappa_{S,P,T}\). If we are also given \(\sigma : S \to \Gamma\) and \(\tau : T \to \Gamma\) then:
  \[\insertion {(\insertion {\sigma} P {(\{\mathcal{T}_T^{\lh(P) - 1}, \mathcal{U}_T^{\lh(P) - 1}\} \circ \tau)})} {P'} {\tau} \equiv^{\mathsf{max}} \insertion \sigma P \tau\]
\end{lemma}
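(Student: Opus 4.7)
The plan is to prove the three conclusions simultaneously by induction on the length of the branch $P$. The geometric picture is that the hypothesis $2 + \bh(P) \leq \lh(P)$ forces $P$ to terminate deep inside a linear portion of $S$, and the pruning $S \sslash P = \insertion S P {D^{\lh(P)-1}}$ simply peels off one layer of this linear subtree while leaving the surrounding tree intact. Reinserting $T$ along the transported branch $P'$ should therefore produce the same tree as the direct insertion along $P$, and the associated substitutions should agree on locally maximal variables. The inductive structure mirrors the two-case definition of insertion itself, so the induction is natural.

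For the base case $P = [k]$, all three assertions follow from direct unfolding. Since $(S,P)$ is a branch with $\bh(P) = 0$, the subtree $S_k$ is linear of dimension $\lh(P) - 1$ and hence coincides with $D^{\lh(P)-1}$ as a tree. The interior substitution $\iota_{S,[k],D^{\lh(P)-1}}$ is the identity labeling of the inserted disc, so $d_{\lh(P)-1}\sub{\iota}$ is precisely the locally maximal variable at position $k$ of $S\sslash P$, namely $\lfloor P' \rfloor$; this gives part~(1). For part~(2), the concatenation formula for single-step insertion is insensitive to what sat at position $k$, so both $\insertion{(S\sslash P)}{P'}{T}$ and $\insertion S P T$ produce the same tree, and the exterior substitution identity follows by composing the two $\{\mathcal U,\mathcal C\}$ assignments at the middle wedge factor. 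Part~(3) is verified slot by slot: the outer insertion at $P'$ overwrites, on the locally maximal variable, the disc-valued data placed there by the inner insertion along $P$, restoring agreement with $\insertion \sigma P \tau$.

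For the inductive case $P = k :: P_0$, the branch height condition forces $T = [T_1]$, and the inequalities translate directly into $2 + \bh(P_0) \leq \lh(P_0)$ so that the induction hypothesis applies to $P_0$ as a branch of $S_k$ with $\lh(P_0) = \lh(P) - 1$. The recursive clause of the insertion formula isolates the $k$-th wedge component cleanly: both $\iota_{S,P,D^{\lh(P)-1}}$ and $\kappa_{S,P,D^{\lh(P)-1}}$ are obtained by suspending the corresponding inner substitutions and wedging with identities on the unchanged components. Consequently each of the three conclusions reduces, by \cref{lem:insertion-sub-susp} together with basic compatibility of composition with $\Sigma$ and $\vee$, to the statement of the lemma for the inner data $(S_k, P_0, T_1)$, which is exactly what the induction hypothesis provides.

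The main obstacle is the bookkeeping in part~(3). The iterated insertion on the left-hand side does not agree with $\insertion \sigma P \tau$ outside the locally maximal variables: the inner insertion populates the disc slot at $P$ with the composite $\{\mathcal T_T^{\lh(P)-1}, \mathcal U_T^{\lh(P)-1}\} \circ \tau$, which contains boundary data of $\tau$ never directly produced by the direct insertion; only the outer insertion along $P'$ then overwrites the top-dimensional slot with the top label of $\tau$. Verifying that this overwrite lands correctly on every locally maximal variable of $\insertion S P T$, and that the residual disagreement on lower-dimensional variables is invisible to $\equiv^{\mathsf{max}}$, is the most delicate calculation and is exactly what prevents the conclusion from being strengthened to syntactic equality.
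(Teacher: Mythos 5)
The paper itself omits a proof for this lemma, deferring to the Agda formalisation, so there is no in-text argument to match your proposal against. Evaluating your sketch on its own merits: the overall architecture --- induction on the length of $P$, unfolding the wedge/concatenation formulas in the base case, and reducing the inductive case to the inner data $(S_k,P_0,T_1)$ via suspension and wedge compatibility --- is the natural approach, and I believe it is correct. Your index bookkeeping ($\lh(P_0)=\lh(P)-1$, $\bh(P_0)=\bh(P)-1$, preservation of the hypothesis $2+\bh\le\lh$, and $\th(T_1)\ge\bh(P_0)$) all checks out.

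However, you have mislocated the genuine difficulty. Your ``main obstacle'' paragraph worries about residual disagreement in part~(3) on variables below the locally maximal ones, and presents the fact that only $\equiv^{\mathsf{max}}$ is asserted as evidence of a delicate cancellation. But if you trace the labelling formulas directly, the outer insertion along $P'$ at position $k$ overwrites \emph{every} slot that the inner insertion along $P$ had populated with data from $\{\mathcal T_T^{\lh(P)-1},\mathcal U_T^{\lh(P)-1}\}\circ\tau$, including the adjacent $0$-level labels $u_0,u_1$ (which get replaced by $t_0,t_m$) and the entire inner disc sub-labelling (replaced by $M_1\,t_1\cdots M_m$). In the base case you in fact obtain plain syntactic equality $\equiv$, from which $\equiv^{\mathsf{max}}$ follows trivially, and the inductive case inherits whatever strength the IH provides. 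There is no partial overwrite to argue about, and no part of part~(3) blocks strengthening to $\equiv$.

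The real non-syntactic content is in part~(2), where the statement uses $=^{\mathsf{max}}$ (definitional equality on locally maximal variables) rather than $\equiv^{\mathsf{max}}$ --- a distinction you pass over when you say the exterior substitution identity ``follows by composing the two $\{\mathcal U,\mathcal C\}$ assignments''. The exterior substitution $\kappa_{S,P,T}$ sends $\lfloor P\rfloor$ to $\mathcal C_T^{\lh(P)}$, which since $\lh(P)>\dim(T)$ is an endo-coherence, whereas the composite $\pi_P\circ\kappa_{S\sslash P,P',T}$ sends $\lfloor P\rfloor$ to an explicit identity term on $\mathcal C_T^{\lh(P)-1}$. These terms are not syntactically equal; reconciling them requires an endo-coherence removal step (and a disc removal step if $T$ is linear, to pass between $\mathcal C_T^{\lh(P)-1}$ and $\mathcal T_T^{\lh(P)-1}$). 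This is exactly the step that turns $\equiv^{\mathsf{max}}$ into the weaker $=^{\mathsf{max}}$, and a complete proof must make it explicit. You should rewrite your ``main obstacle'' paragraph to address part~(2) rather than part~(3), and spell out the endo-coherence-removal argument there.
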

\begin{proof}
  See\begin{tabular}[t]{l}
    \small\texttt{Catt/Tree/Insertion/Properties.agda(1094)}\\
    \small\texttt{Catt/Typing/Insertion/Equality.agda(139)}.
  \end{tabular}\\[-9pt]\qedhere
\end{proof}

\begin{lemma}
  \label{lem:insertion-irrel}
  If \(P\) is a branch of \(S\), and \(\sigma, \sigma' : S \to \Gamma\) are substitutions differing only on \(\lfloor P \rfloor\), then the following holds for insertion redex \((S,P,T,\Gamma,\sigma,\tau)\):
  \[\insertion \sigma P \tau \equiv \insertion {\sigma'} P \tau\]
\end{lemma}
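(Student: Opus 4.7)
The plan is to prove this by induction on the length of \(P\), working with the labelling representation of substitutions throughout. Let \(L, L'\) denote the labellings of \(S\) corresponding to \(\sigma\) and \(\sigma'\); the hypothesis translates to the statement that \(L\) and \(L'\) assign identical terms to every variable of \(\lfloor S \rfloor\) other than \(\lfloor P \rfloor\).

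In the base case \(P = [k]\), with \(S = [S_1,\dots,S_n]\), the variable \(\lfloor P \rfloor\) sits at the top dimension of \(\Sigma S_k\), so it occurs only inside the sub-labelling \(L_k\). Inspecting the definition of \(\insertion L {[k]} M\), the region occupied by \(s_{k-1}, L_k, s_k\) is replaced wholesale by the boundary terms \(t_0, \dots, t_m\) interleaved with the sub-labellings \(M_1, \dots, M_m\) drawn from \(M\). The components that survive from \(L\) are \(s_0,\dots,s_{k-2}, s_{k+1},\dots,s_n\) together with \(L_1,\dots,L_{k-1}, L_{k+1},\dots,L_n\), none of which mentions \(\lfloor P \rfloor\), so \(L\) and \(L'\) agree verbatim on all of them. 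Hence \(\insertion L {[k]} M \equiv \insertion {L'} {[k]} M\).

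In the inductive case \(P = k :: P'\) we have \(T = [T_1]\), and the definition of \(\insertion L P M\) incorporates \(L_k\) only through the recursive sub-expression \(\insertion {L_k} {P'} {M_1}\). Viewed inside \(S_k\), the variable \(\lfloor P \rfloor\) coincides with \(\lfloor P' \rfloor\), so \(L_k\) and \(L'_k\) differ only at \(\lfloor P' \rfloor\); applying the induction hypothesis to the insertion point \((S_k, P', T_1)\) gives \(\insertion {L_k} {P'} {M_1} \equiv \insertion {L'_k} {P'} {M_1}\). Every remaining piece of the inserted labelling is drawn only from parts of \(L\) on which \(L\) and \(L'\) coincide, so the two labellings are syntactically identical.

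I do not expect any substantive obstacle here; the whole argument is a direct inspection of the combinatorial definition of the inserted labelling. The only bookkeeping required is to verify, in each case, that the sub-labelling containing \(\lfloor P \rfloor\) is either discarded outright (base case) or passed through a smaller instance of the same construction to which induction applies (step case), while every surviving piece of \(L\) is one on which \(\sigma\) and \(\sigma'\) are assumed to agree.
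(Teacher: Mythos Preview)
Your argument is correct: the inserted labelling is defined by induction on \(P\), and in each clause the sub-labelling \(L_k\) that carries the variable \(\lfloor P\rfloor\) is either discarded outright (base case) or passed through a strictly smaller instance of the same construction (step case), while every retained component of \(L\) is one on which \(L\) and \(L'\) agree by hypothesis.

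The paper does not give a textual proof of this lemma at all; it is one of the ``straightforward lemmas which consist of a large case analysis'' whose proofs are omitted and deferred to the accompanying Agda formalisation. Your induction on the length of \(P\), unfolding the labelling definition in each case, is exactly the natural argument and is what one would expect the formalised proof to do.
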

\begin{proof}
  By inspection of the definition, \(\insertion \sigma P \tau\) does not use the term \(\lfloor P \rfloor \sub \sigma\).
\end{proof}

\begin{proof}[Proof of \cref{lem:insertable}]
  We proceed by induction on \(\lh(P) - \dim(T)\). If \(\lh(P) - \dim(T) = 0\) then \(\mathcal{C}_T^{\lh(P)}\) is a composite and so we can perform the usual insertion.
  We now assume that \(\lh(P) > \dim(T)\). We may also assume without loss of generality that \(\mathcal{C}_T^{\lh(P)}\) is not an identity, as otherwise it would be immediately insertable. This allows us to perform endo-coherence removal to get:
  \[\mathcal{C}_T^{\lh(P)} \leadsto \mathbbm{1}\sub{\{\mathcal{T}_T^{\lh(P) - 1}, \mathcal{U}_T^{\lh(P) - 1}\} \circ \tau}\]
  Now suppose \(a \equiv \Coh S A \sigma\) and \(b \equiv \Coh S A {\sigma'}\) where \(\sigma'\) is the result of applying the above reduction to the appropriate term of \(\sigma\). Since \(P \sub {\sigma'}\) is now an identity it can be inserted to get \(b \leadsto c\) where:
  \begin{equation*}
    c \equiv \Coh {S \sslash P} {A \sub {\pi_P}} {\insertion {\sigma'} P {(\{\mathcal{T}_T^{\lh(P) - 1}, \mathcal{U}_T^{\lh(P) - 1}\} \circ \tau)}}
  \end{equation*}
  We now wish to show that \(2 + \bh(P) \leq \lh(P)\) so that \(P'\) exists as a branch of \(S \sslash P\). Since we always have \(1 + \bh(P) \leq \lh(P)\), we consider the case where \(1 + \bh(P) = \lh(P)\). We know that \(\bh(P) \leq \dim(T) \leq \lh(P)\) and so must be equal to one of the two. If \(\dim(T) = \lh(P)\) then \(\mathcal{C}_T^{\lh(P)}\) is an standard composite. If \(\dim(T) = \bh(P)\) then \(\th(T) = \dim(T)\) and so \(T\) is linear. However this makes \(\mathcal{C}_T^{\lh(P)}\) an identity. Either case is a contradiction and so \(2 + \bh(P) \leq \lh(P)\) and so \(P'\) is a branch of \(S \sslash P\).

  By \cref{lem:pruned-bp,lem:insertion-subs-comm}, we now have:
  \begin{align*}
    &\phantom{{}\equiv{}}P' \sub {\insertion{\sigma'} P {(\{\mathcal{T}_T^{\lh(P) - 1}, \mathcal{U}_T^{\lh(P) - 1}\} \circ \tau)}} \\
    &\equiv d_{\lh(P) - 1} \sub {\iota_{S,P,D^{\lh(P) - 1}} \circ (\insertion {\sigma'} P {(\{\mathcal{T}_T^{\lh(P) - 1}, \mathcal{U}_T^{\lh(P) - 1}\} \circ \tau)})} \\
    &\equiv d_{\lh(P) - 1} \sub {\{\mathcal{T}_T^{\lh(P) - 1}, \mathcal{U}_T^{\lh(P) - 1}\} \circ \tau} \\
    &\equiv \mathcal{T}_T^{\lh(P) - 1}\sub{\tau}\\
    &\equiv \mathcal{C}_T^{\lh(P) - 1}\sub\tau
  \end{align*}
  with the last equivalence holding as if \(\mathcal{T}_T^{\lh(P)-1}\) was a variable then \(\mathcal{C}_T^{\lh(P)}\) would be an identity. As \(\lh(P') - \dim(T) = \lh(P) - \dim(T) - 1\) we can use the induction hypothesis to get that \(c \leadsto d\) and:
  \begin{align*}
    d =_{\dim(a)}{} &\Coh {\insertion {(S \sslash P)} {P'} T} {A \sub {\pi_P \circ \kappa_{S\sslash P,P',T}}} {\\&\insertion {(\insertion {\sigma'} P {(\{\mathcal{T}_T^{\lh(P) - 1}, \mathcal{U}_T^{\lh(P) - 1}\} \circ \tau)})} {P'} {\tau}}
  \end{align*}
  By \cref{lem:pruned-bp,lem:insertion-irrel},
  \begin{equation*}
    d =_{\dim(a)} \Coh {\insertion S P T} {A \sub {\kappa_{S,P,T}}} {\insertion \sigma P \tau}
  \end{equation*}
  which completes the proof as \(a \leadsto^* d\).
\end{proof}

This allows us to prove that the reduction relation agrees with equality.

\begin{prop}
  The reflexive, symmetric, transitive closure of reduction is equal to definitional equality.
\end{prop}
\begin{proof}
  Reduction is clearly a subrelation of definitional equality. For the other direction we mutually induct on subterms and dimension. Assume we have equality \(\Gamma \vdash s = t\). The only difficult cases are endo-coherence removal and insertion. If \(s\) is an identity and the equality is given by endo-coherence removal then \(s \equiv t\).

  Now suppose \(s = t\) is given by insertion. First assume \(s\) is an identity. By \cref{lem:disc-insertion-1} and some computation we have:
  \[ s \equiv \mathbbm{1}\sub{\{\mathcal{C}_\Delta^n\sub{\tau}, A\}} = \Coh {\Delta} {\mathcal{C}_\Delta^n \to_{\mathcal{U}_{D^n}^n\sub{\kappa}} \mathcal{C}_\Delta^n} {\tau}\equiv t\]
  We now notice that applying endo-coherence removal to \(t\) gives:
  \[t \leadsto \mathbbm{1}\sub{\{\mathcal{C}_\Delta^n\sub{\tau}, \mathcal{U}_{D^n}^n \sub \kappa \sub \tau\}}\]
  By validity we must have \(\mathcal{U}_{D^n}^n \sub \kappa\sub \tau = A\) and so by inductive hypothesis this case is done. Next suppose \(s\) is a disc. Then by \cref{lem:disc-insertion-1}:
  \[ s \equiv \mathcal{C}_{D^n}^n\sub{\{\mathcal{C}_\Delta^n\sub{\tau}, A\}} = \Coh \Delta {\mathcal{U}_{D^n}^n \sub \kappa} {\tau} \equiv t\]
  By \cref{lem:standard-type-exterior}, \(t =_n \mathcal{C}_\Delta^n \sub \tau\), and so by applying disc removal to \(s\) and using the inductive hypothesis on dimension we are done.

  Lastly suppose \(s\) is not an identity or disc and the equality is given by insertion. Then by \cref{lem:insertable} and the inductive hypothesis on dimension the proof is complete.
\end{proof}

Due to the conversion condition, if we start with a well typed term, then any term arising as a reduction of it will also be well typed. In practice this means that we rarely need to check typing conditions when reducing a term, and as such we will omit the context \(\Gamma\) and just write \(s \leadsto t\) or \(s \leadsto^* t\), when we know \(s\) is well-typed. By inspecting the proof that \(\sua\) is tame, in particular noting that the proofs do not use the symmetry of equality, we can deduce that \(\leadsto^*\) respects context extension and substitution.

\mypara{Termination.}
We show strong termination for the reduction, demonstrating that there are no infinite reduction sequences. Our strategy is to assign an ordinal number to each term, show that each single step reduction reduces the associated ordinal number, and therefore deduce that any infinite reduction sequence of the form above would imply the existence of an infinite chain of ordinals, which cannot exist due to well-foundedness of ordinal numbers. We call the ordinal number associated to each term its \emph{syntactic complexity}.

To define syntactic complexity, we will need to use ordinal numbers up to \(\omega^\omega\). We will also need a construction known as the natural sum of ordinals, \(\alpha \+ \beta\), which is associative, commutative, and strictly monotone in both of its arguments~\cite{lipparini16_infin_natur_sum}.

\begin{definition}
  For all terms \(t\) and substitutions \(\sigma\), the \emph{syntactic complexity} \(\sc(t)\) and \(\sc(\sigma)\) are mutually defined as follows:
  \begin{itemize}
  \item For substitutions we have:
    \[\sc(\langle t_0, \dots, t_n \rangle) = \bighash_{i=0}^n t_i\]
  \item For terms, we have \(\sc(x) = 0\) for variables \(x\).

  If \(\Coh \Delta A \sigma\) is an identity then:
    \begin{align*}
    &\sc(\Coh \Delta A \sigma) = \omega^{\dim(A)} \+ \sc(\sigma)\\
      \intertext{Otherwise:}
    &\sc(\Coh \Delta A \sigma) = 2\omega^{\dim(A)} \+ \sc(\sigma)
    \end{align*}
  \end{itemize}
\end{definition}

The motivation for syntactic complexity is as follows. We would like to show that each reduction reduces the depth of the syntax tree, but this doesn't quite work, as reductions like insertion can add new constructions into the reduced term. The necessary insight is that these constructions only add complexity in a lower dimension than the term being reduced. The syntactic complexity is given as an ordinal to leverage known results, though it should be noted that ordinals below \(\omega^\omega\) can be represented by a lists of natural numbers ordered lexicographically and under this interpretation the syntactic complexity effectively computes the number of coherences at each dimension. Therefore removing a coherence of dimension \(n\) reduces the complexity, even if we add arbitrary complexity at lower dimensions. Syntactic complexity also treats identities in a special way, as these play a special role in blocking reduction in the theory.

The syntactic complexity does not account for the type in a coherence, as this is difficult to encode. Instead of showing that all reductions reduce syntactic complexity, we instead show that all reduction which are not ``cell reductions'' (reductions that have the rule marked ``Cell'' in their derivation) reduce syntactic complexity and deduce that a hypothetical infinite reduction sequence must only consist of cell reductions after a finite number of steps, and then appeal to an induction on dimension.

\begin{restatable}{theorem}{screduce}
  \label{thm:sc-reduces}
  One-step reductions that do not use the cell rule reduce syntactic complexity. Those that do use the cell rule do not change the complexity.
\end{restatable}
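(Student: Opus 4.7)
The plan is to induct on the derivation of the reduction given by \cref{fig:red-rules}, tracking at each node whether the Cell rule has been invoked anywhere in the subderivation; I will prove the two halves of the statement simultaneously.

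The propagation cases are immediate from the inductive hypothesis together with strict monotonicity of the natural sum $\+$. For the Arg and substitution-extension rules, if the single premise uses Cell then by IH its $\sc$ is preserved, so the total is unchanged; if it does not, the IH gives a strict drop that propagates upward by monotonicity. The three rules reducing $\arr s A t$ only ever arise as premises of Cell, so they carry no independent $\sc$-obligation. The Cell rule itself preserves $\sc$ because the value of $\sc$ on a coherence depends only on $\dim A$ and $\sc(\sigma)$, and a short syntactic induction shows reductions preserve dimension; moreover Cell cannot fire on an identity coherence, whose type $\arr x \star x$ consists only of variables and $\star$ and so admits no reduction, meaning identity status is automatically preserved.

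The remaining base case is the generator rule, for which none of the three \textsf{sua} rules uses Cell, so a strict drop in $\sc$ is required. Insertion follows directly from the preceding lemma $\sc(\insertion\sigma P\tau) < \sc(\sigma)$ together with preservation of dimension: the leading coefficient of $\omega^{\dim A}$ is unchanged, and monotonicity transports the drop. Disc removal $\Coh{D^n}{\mathcal{U}^n_{D^n}}{\{A,s\}} \leadsto s$ is immediate, since $s$ already appears as a component of the right-hand substitution and the leading positive $2\omega^n$ suffices.

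I expect the main technical friction to come from endo-coherence removal, where $\Coh\Delta{\arr s A s}\sigma \leadsto \mathbbm{1}\sub{\{A\sub\sigma, s\sub\sigma\}}$ drops the leading coefficient from $2\omega^{\dim A + 1}$ to $\omega^{\dim A + 1}$, so one must show that this single head exceeds $\sc(\{A\sub\sigma, s\sub\sigma\})$ altogether. My plan is to prove, by a short induction on terms, a uniform bound $\sc(t) < \omega^{\dim t + 1}$; this follows from the defining clauses of $\sc$ together with the fact that a finite natural sum of ordinals strictly below $\omega^k$ is itself strictly below $\omega^k$, by a Cantor normal form argument. Every entry of the disc substitution has dimension at most $\dim A$ and so contributes an ordinal strictly below $\omega^{\dim A + 1}$, whence the whole right-hand $\sc$ is strictly below $2\omega^{\dim A + 1} \leq \sc(\text{LHS})$, closing the gap.
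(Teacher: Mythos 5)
Your overall strategy matches the paper's: induct on the derivation, dispatch structural cases by strict monotonicity of $\+$, use the preceding lemma for insertion, the containment $\sc(s) \leq \sc(\{A,s\})$ for disc removal, and a bound on $\sc$ of the disc substitution for endo-coherence removal. Making the bound $\sc(t) < \omega^{\dim(t)+1}$ explicit is a genuine clarification of a step the paper compresses into a single line; but be aware it is not a purely syntactic induction. For $\Coh\Delta A\sigma$ the recursion hands you $\sc(t_i)$ for the components of $\sigma$, whose dimensions are bounded by $\dim(\Delta)$, and you need $\dim(\Delta)\leq\dim(A)$ to stay under $\omega^{\dim(A)+1}$. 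That inequality is a consequence of the support conditions in the coherence typing rules, not of raw syntax, so the lemma should be stated and proved for well-typed terms (which suffices here, since the conversion condition keeps reducts well-typed).

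The Cell case contains an actual error. You argue that identity status is ``automatically preserved'' because Cell cannot fire on an identity. That establishes the wrong direction: the danger is Cell turning a \emph{non}-identity into an identity, and this can happen. Take $\Delta=D^n$ and $A\equiv\arr{\mathcal{C}_{D^n}^n}{\mathcal{U}_{D^n}^n}{d_n}$; this is a well-formed type over $D^n$, the term $\Coh{D^n}{A}{\sigma}$ is well-typed by the second coherence rule and is not an identity, yet $\mathcal{C}_{D^n}^n \leadsto d_n$ by disc removal, giving $A \leadsto \mathcal{U}_{D^n}^{n+1}$ and hence a Cell step to the identity $\Coh{D^n}{\mathcal{U}_{D^n}^{n+1}}{\sigma}$. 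That step drops the leading coefficient from $2\omega^{n+1}$ to $\omega^{n+1}$, so Cell reductions are not complexity-\emph{preserving} as you claim (and as the theorem's phrasing suggests). What is true, and all that the termination corollary requires, is that Cell reductions never \emph{increase} $\sc$: the type's dimension is preserved, the substitution is untouched, and identity status can only be gained, not lost (your observation that Cell cannot fire on an identity gives exactly the ``not lost'' half). Your Cell case should be rephrased to prove non-increase rather than equality.
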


\noindent We begin the proof of this theorem with the following lemma.

\begin{lemma}
  The following inequality holds for any insertion redex \((S,P,T,\Gamma,\sigma,\tau)\):
  \[\sc(\insertion \sigma P \tau) < \sc(\sigma)\]
\end{lemma}
\begin{proof}
  We begin by noting that:
  \begin{align*}
    \sc(\sigma) &= \left(\bighash_{x\neq \lfloor P \rfloor} \sc(x\sub\sigma)\right) \+ \sc(P\sub\sigma)\\
                &= \left(\bighash_{x\neq \lfloor P \rfloor} \sc(x\sub\sigma)\right) \+ \sc(\mathcal{C}_T^{\lh(P)}\sub\tau)\\
                &> \left(\bighash_{x\neq \lfloor P \rfloor} \sc(x\sub\sigma)\right) \+ \sc(\tau)
  \end{align*}
  Further we extend the notion of syntactic depth to labels in the obvious way and therefore show that for all labels \(L\) and \(M\) with appropriate conditions that:
  \[\sc(\insertion L P M) \leq \bighash_{x\neq \lfloor P \rfloor} \sc(x\sub{L}) \+ \sc(M) \]
  which we do by induction on \(P\). If \(P = [k]\) then it is clear that \(\insertion L P M\) contains all the terms of \(M\) and some of the terms of \(L\), and crucially not \(\lfloor P \rfloor \sub L\). If instead \(P = k :: P'\) then by induction hypothesis we get that:
  \[\sc(\insertion {L_k} {P'} {M_1}) \leq \bighash_{x\neq \lfloor P' \rfloor} \sc(x\sub{L'}) \+ \sc(M_1)\]
  It is then clear again that \(\insertion L P M\) contains terms from \(M\) and terms of \(L\) which are not \(P \sub L\), and so the inequality holds.
\end{proof}

\begin{proof}[Proof of \cref{thm:sc-reduces}]
  We wish to show that for all reductions \(s \leadsto t\), and \(\sigma \leadsto \tau\) that \(\sc(t) < \sc(s)\), and \(\sc(\tau) < \sc(\sigma)\) respectively (replacing these by equalities if the reduction was a cell reduction). We proceed by induction on the derivation of the reduction, noting that all cases for structural rules follow from strict monotonicity of the natural sum. This leaves us with the following base cases.

\mypara{Disc removal.} Suppose \(\mathcal{C}_{D^n}^n\sub{\{A,s\}} \leadsto s\) is by disc removal. By a simple induction, \(\sc(\{A,s\}) \geq \sc(s)\) and so:
    \[\sc(s) \leq \sc(\{A,s\}) < \sc(\mathcal{C}_{D^n}^n\sub{\{A,s\}})\]

\mypara{Endo-coherence removal.} Let \(\Coh \Delta {\arr s A s} \sigma \leadsto \mathbbm{1}_{\{A\sub\sigma, s\sub\sigma\}}\) be a reduction by endo-coherence removal. Then:
    \begin{align*}
      \sc(\mathbbm{1}_{\{A \sub \sigma, s \sub \sigma\}}) &= \omega^{1+\dim(A)} \+ \sc(\{A\sub\sigma,s\sub\sigma\})\\
                                                          &< \omega^{1+\dim(A)} \+ \omega^{1+\dim(A)}\\
                                                          &\leq \sc(\Coh \Delta {\arr s A s} \sigma)
    \end{align*}
    Here the second line holds as \(\dim(s) = 1 + \dim(A)\) and the last line holds as \(\Coh \Delta {\arr s A s} \sigma\) cannot be an identity by assumption.

    \mypara{Insertion.} Let \((S,P,T,\Gamma,\sigma,\tau)\) be an insertion redex so that:
    \[\Coh S A \sigma \leadsto \Coh {\insertion S P T} {A \sub \kappa} {\insertion \sigma P \tau}\]
    by insertion. This implies that \(\Coh S A \sigma\) is not an identity. Then:
    \begin{align*}
      &\hspace{-1cm}\sc(\Coh {\insertion S P T} {A \sub \kappa} {\insertion \sigma P \tau})\\
      \leq{} &2\omega^{\dim(A)} \+ \sc(\insertion \sigma P \tau)\\
      <{} &2\omega^{\dim(A)} \+ \sc(\sigma)\\
      \leq{} &\Coh S A \sigma
    \end{align*}

  A simple induction shows that reductions using the cell rule do not modify the complexity, as when the cell rule is used, we modify a type that does not contribute to the syntactic complexity.
\end{proof}

\begin{cor}
  Reduction for \Cattsua is strongly terminating.
\end{cor}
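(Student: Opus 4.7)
The plan is to combine \cref{thm:sc-reduces} with well-foundedness of the ordinals and an induction on dimension. Assume for contradiction that there is an infinite reduction chain \(t_0 \leadsto t_1 \leadsto \cdots\). By \cref{thm:sc-reduces}, each step either strictly decreases \(\sc\) (when it does not use the cell rule) or preserves \(\sc\) (when it does). Because ordinals are well-founded, at most finitely many non-cell-using steps can occur in any chain, so there exists an index \(N\) beyond which every reduction \(t_i \leadsto t_{i+1}\) uses the cell rule somewhere in its derivation.

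The next step is to show that an infinite tail of cell-using reductions forces an infinite reduction chain on strictly lower-dimensional syntax. Any cell-using one-step reduction, when its derivation is traced to the rewrite at the leaf, necessarily rewrites a subterm lying inside some type \(A\) at a coherence node. Since \(\arr s {A'} t\) has source, inner type, and target all of dimension strictly less than itself, and every coherence appearing in \(t_0\) has dimension at most \(\dim(t_0)\), each such rewrite acts on syntax of dimension at most \(\dim(t_0) - 1\). I would close the argument by strong induction on \(n = \dim(t_0)\): for the base case \(n = 0\), every term of dimension \(0\) is a variable (since coherences have dimension \(\geq 1\)), so no reduction applies; for the inductive step, one first disposes of the parallel claim for types of dimension \(n+1\) by pigeonholing an infinite chain into an infinite chain on one of the three sub-components (of dimension \(\leq n\), handled by the IH), and then handles terms of dimension \(n+1\) by combining the \(\sc\)-argument above with the just-established type case.

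The main obstacle is the middle step: consecutive cell-using reductions in the infinite tail may act at different and changing positions of the evolving syntax trees, so a naive argument that ``fixes a position and looks for an infinite chain there'' does not go through. The cleanest way to patch this, which I would expect to be used in the Agda formalisation, is to introduce a refined ordinal measure \(\sc^+\) on all syntactic classes that also records contributions from every nested type (recursively weighting the source, target, and inner type of each arrow type by \(\sc\)-like quantities), and to show that every reduction---cell-using or not---strictly decreases \(\sc^+\); termination then follows immediately from well-foundedness. Either route conveys the same essential content: the cell rule can only shift complexity into strictly lower dimensions, and complexity at each fixed dimension is well-founded.
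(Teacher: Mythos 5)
Your proof follows the same overall route as the paper: apply \cref{thm:sc-reduces} to conclude that any infinite chain eventually consists only of cell reductions, observe that the actual rewrite in a cell reduction lies at strictly lower dimension, and close the argument by induction on dimension (with the base case that $0$-dimensional terms are variables and hence irreducible, and a pigeonhole over the finitely many positions of lower-dimensional subterms).

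Where your write-up goes astray is the paragraph in which you treat the pigeonhole as the ``main obstacle'' and propose replacing it with a new ordinal measure $\sc^+$. That concern is unfounded, and the paper does not need the workaround. A cell reduction --- a one-step reduction whose derivation contains the \textsc{Cell} rule --- never changes the type-stripped skeleton of the term: every rule above the \textsc{Cell} application in the derivation (the argument rule and the substitution rules) merely selects a subterm without altering the surrounding syntax, and the \textsc{Cell} rule itself replaces $\Coh{\Delta}{A}{\sigma}$ by $\Coh{\Delta}{B}{\sigma}$, so the arrangement of coherence nodes and substitution entries in the result is identical to that of the source. Likewise, a type reduction always preserves the $\arr{-}{-}{-}$ structure of the type and only reduces one of the source, target, or inner type. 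Consequently the positions of the lower-dimensional subterms sitting inside types are stable across an infinite tail of cell reductions, and the naive pigeonhole you were worried about --- fix one such position and extract the infinite subsequence of steps that rewrite there --- does go through. The key observation you are missing is precisely this skeleton-preservation property of cell reductions; once you have it, the paper's short argument is complete, and there is no need for (and you do not actually construct) a refined measure $\sc^+$.
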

\begin{proof}
  We proceed by a strong induction on dimension.
  Suppose there is an infinite reduction sequence, starting with a dimension\-\(k\) term:
  \[ s_0 \leadsto s_1 \leadsto s_2 \leadsto \cdots\]
  Then by \cref{thm:sc-reduces}, only finitely many of these reductions do not use the cell rule, and so there is an \(n\) such that:
  \[ s_n \leadsto s_{n+1} \leadsto \cdots\]
  are all cell reductions. Each of these reductions reduces one of finitely many subterms of \(s_n\), and each of these subterms has dimension less than \(k\), so by inductive hypothesis, none of these subterms can be reduced infinitely often, contradicting the existence of an infinite reduction sequence.
\end{proof}

\mypara{Confluence.}
To prove confluence, we take the standard approach of proving local confluence, which says that all single-step reductions of a term can be reduced (in any number of steps) to a common reduct. This implies full confluence (that multi-step reduction has the diamond property) and uniqueness of normal forms when combined with strong termination.

\begin{restatable}{theorem}{confluence}
  \label{thm:confluence}
  Reduction is locally confluent: if \(a\) is valid with \(a \leadsto b\) and \(a \leadsto c\), then there exists some \(d\) with \(b \leadsto^* d\) and \(c \leadsto^* d\).
\end{restatable}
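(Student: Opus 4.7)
The plan is to proceed by induction on the derivation of $a \leadsto b$, with an inner case analysis on the derivation of $a \leadsto c$. The structural rules reduce the problem to the case where both reductions occur at the same outermost position: when the two reductions are in disjoint sub-positions of a common syntactic constructor (different arguments of a substitution, or source/target/middle of an arrow type), they can be joined by applying each reduction on the other side, since reduction is preserved by substitution and by context extension (noted in the text after \cref{fig:red-rules}). When one reduction is strictly inside the other's sub-position, a single-step reduction on the inside suffices to join them. This reduces the problem to \emph{critical pairs}: situations where both $a \leadsto b$ and $a \leadsto c$ fire at the top of the same coherence term $\Coh{S}{A}{\sigma}$.

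For these critical pairs, one reduction is always a cell reduction in $A$, a reduction in $\sigma$, or one of the three sua-rules (disc removal, endo-coherence removal, insertion). Cell reductions in $A$ commute with anything happening in $\sigma$ or at the outer level, because the sua-rules only use the shape of $A$ (to check whether it is an identity or has equal source/target up to syntax), and cell reductions inside $A$ preserve these shape-level properties. Reductions inside $\sigma$ similarly commute with the outer rules: for disc removal and endo-coherence removal the interaction is immediate because these rules simply rewrite the outer coherence using $A\sub\sigma$ or discard $\sigma$, and either way we can apply the $\sigma$-reduction afterwards; for insertion, if the reduction is in a component of $\sigma$ not equal to $P\sub\sigma$, it lifts to $\insertion{\sigma}{P}{\tau}$ via \cref{lem:insertion-sub-susp}; if the reduction is inside $P\sub\sigma = \mathcal{C}_T^{\lh(P)}\sub\tau$, it must be inside $\tau$, and again lifts. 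The rule-rule critical pairs that do not involve insertion are short: disc removal is incompatible with endo-coherence removal (a disc coherence is not an endo-coherence unless it is an identity, in which case neither rule applies or they agree), and there is only one disc-removal and one endo-coherence redex at the head of a given coherence.

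The main obstacle is the insertion-insertion critical pair: two distinct branches $P_1, P_2$ of $S$ yield insertion redexes via arguments $\tau_1 : T_1 \to \Gamma$ and $\tau_2 : T_2 \to \Gamma$. The plan is to show that $\insertion{(\insertion{S}{P_1}{T_1})}{P_2'}{T_2}$ and $\insertion{(\insertion{S}{P_2}{T_2})}{P_1'}{T_1}$ agree as trees, where $P_i'$ denotes the transported branch, and that the associated exterior and interior substitutions commute in the appropriate sense. This is done by induction on the pair of branches, splitting into the sub-cases where $P_1, P_2$ share no common prefix (the insertions act on disjoint subtrees and straightforwardly commute) and where one is a prefix of or shares a prefix with the other (and an induction hypothesis on the smaller branches applies, together with \cref{lem:insertion-sub-susp} for the suspension step). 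The universal property of insertion as a pushout (\cref{thm:all-inserts} and the theorem following \cref{lem:insertion-subs-comm}) then identifies the two iterated pushouts as the same object, and in particular identifies the inserted substitutions into $\Gamma$. A subtlety arises when the first insertion causes the \emph{second} branch to no longer be locally maximal, or when it produces a coherence to which \cref{thm:all-inserts} (rather than the restricted insertion rule) is required; here we appeal to \cref{thm:all-inserts} to perform the insertion anyway.

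The remaining cases to address are insertion against disc removal and against endo-coherence removal at the head of the same coherence. If $S = D^n$, every branch of $S$ is the top cell, and insertion at this branch simply replaces the disc with $T$ and the resulting coherence is an unbiased composite over $T$ after $A\sub\kappa$ is identified via \cref{thm:unbiased-insert}; disc removal on the original term yields $P\sub\sigma = \mathcal{C}_T^{\lh(P)}\sub\tau$, and a further application of \cref{thm:unbiased-insert} (or disc/identity reductions, depending on whether $T$ is itself a disc or $\mathcal{C}_T^{\lh(P)}$ an identity) joins the two. If the head coherence is an endo-coherence and also admits an insertion, then the inserted coherence is again an endo-coherence (since source and target of $A$ become equal under $\kappa$ by \cref{lem:insertion-subs-comm} applied to $A$), so endo-coherence removal applies after insertion and produces the same identity as endo-coherence removal applied directly, using that identities on equal terms are syntactically equal. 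Collecting all cases yields the desired common reduct $d$.
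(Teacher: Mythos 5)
Your proposal goes wrong at exactly the place where the paper's proof has to work hardest. In your treatment of the critical pair between an outer insertion at branch $P$ and a reduction inside $\sigma$, you write that ``if the reduction is inside $P\sub\sigma = \mathcal{C}_T^{\lh(P)}\sub\tau$, it must be inside $\tau$, and again lifts.'' This is false: the reduction can fire at the \emph{head} of $P\sub\sigma$ itself. In particular, $\mathcal{C}_T^{\lh(P)}\sub\tau$ may admit a disc removal (when $T$ is a disc), or --- the genuinely hard case, which the paper devotes most of its discussion and several auxiliary lemmas to --- an \emph{insertion} along some branch $Q$ of $T$. In that case the inner insertion changes the head tree from $T$ to $\insertion{T}{Q}{U}$ and produces a coherence whose type is $\mathcal{U}_T^{\lh(P)}\sub{\kappa_{T,Q,U}}$, which is \emph{not} in general an unbiased type, so the outer insertion redex is destroyed. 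Recovering it requires \cref{lem:unbiased-type-exterior} (a directed form of \cref{thm:unbiased-insert}) to push the inner term back to an unbiased coherence, and then, since the result need not be a composite or identity, \cref{lem:insertable} (a directed form of \cref{thm:all-inserts}) to perform the outer insertion anyway via an intermediate endo-coherence removal and pruning step. Finally \cref{lem:inserted-insertion} and \cref{lem:insert-lin-height} close the square. None of this is visible in your plan; you have reduced the hardest subcase to a trivial lift, which cannot be repaired without essentially reconstructing the paper's argument.

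A second, structural gap: the pushout identification you invoke only holds up to $\equiv^{\mathsf{max}}$ / $=^{\mathsf{max}}$, i.e.\ up to definitional equality on lower-dimensional data, not up to syntactic equality (\cref{lem:ins-comm-max}, \cref{lem:insertion-different}, \cref{lem:inserted-insertion} are all stated this way). Consequently the two sides of your commuting square are not joined by $\leadsto^*$ alone: you obtain $b' =_{\dim(a)} c'$, the dimension-bounded equality, not $b' \equiv c'$. The paper handles this by setting up the entire proof as a simultaneous induction on subterms \emph{and} dimension, showing $b \leadsto^* b'$, $c \leadsto^* c'$, $b' =_{\dim(a)} c'$, and then invoking the dimension induction to convert that bounded equality into a common reduct. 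Your proof plan has no analogue of bounded equality or the dimension induction, so even the cases you do sketch correctly would not close. You also do not address the side condition that the coherence produced by an outer insertion must not itself be an identity, which the paper verifies by a support argument on $\partial^-(S)$; without this the outer insertion in your insertion-insertion case is not licensed by the insertion rule.
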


\begin{proof}
  We proceed by simultaneous induction on subterms and dimension. Suppose \(a \leadsto b\) and \(a \leadsto c\). It is sufficient to show that \(b \leadsto^* b'\), \(c \leadsto^* c'\), and that \(b' =_{\dim(a)} c'\), as then by induction on dimension we have that \(b'\) and \(c'\) have a common reduct, which we can obtain for example by reducing both terms to normal form. Choosing $a \equiv \Coh S A \sigma$, the most nontrivial case is where $a \leadsto b$ is an instance of insertion along some branch \(P\), and \(a \leadsto c\) is an insertion on the argument \(\lfloor P \rfloor\sub\sigma\). The difficulty of this critical pair is that \(\lfloor P \rfloor\sub \sigma\) need not be in head normal form, and furthermore, the reduction \(a \leadsto c\) can make the original insertion invalid. This phenomenon does not occur in the predecessor theory \(\Cattsu\), where only identities can be pruned, and all reducts of identities are again identities. We begin with this critical pair below.

  \mypara{\bf Insertion.} Let \(a \leadsto b\) be an insertion along redex \((S,P,T,\Gamma,\sigma,\tau)\) with \(a \equiv \Coh S A \sigma\) not an identity or disc and  with \(\mathcal{C}_T^{\lh(P)}\) being a composite or identity. We now split on the reduction \(a \leadsto c\).
  \mypara{Insertion on inserted argument.} Now suppose \(\mathcal{C}_T^{\lh(P)}\sub\tau\) admits an insertion along redex \((T, Q, U, \Gamma, \tau, \mu)\). Then:
\[\mathcal{C}_T^{\lh(P)}\sub\tau \leadsto \Coh {\insertion T Q U} {\mathcal{U}_T^{\lh(P)}\sub{\kappa_{T,Q,U}}} {\insertion \tau Q \mu}\]
We then have \(c \equiv \Coh S A {\sigma'}\) where \(\sigma'\) is \(\sigma\) with the reduction above applied. We can conclude that \(\mathcal{C}_T^{\lh(P)}\) must be a composite (i.e. not an identity) as otherwise the second insertion would not be possible. Similarly \(T\) cannot be linear as otherwise \(\mathcal{C}_T^{\lh(P)}\) would be a disc.

We now need the following lemmas, the first of which which is a directed version of \cref{lem:standard-type-exterior} with more conditions.

\begin{restatable}{lemma}{standardtypereduct}
  \label{lem:standard-type-exterior-reduct}
  Let \((S,P,T)\) be an insertion point. Then if \(S\) is not linear or \(n \leq \dim(S)\), \(\mathcal{U}_S^n\sub{\kappa_{S,P,T}} \leadsto^* \mathcal{U}_{\insertion S P T}^n\) and if \(\dim(S) \leq n\) and \(S\) is not linear or \(\dim(S) = n\) then \(\mathcal{T}_S^n \sub{\kappa_{S,P,T}} \leadsto^* \mathcal{T}_{\insertion S P T}^n\).
\end{restatable}

To proof \cref{lem:standard-type-exterior-reduct}, the interaction of insertion with boundary maps must be investigated. This is done by the following lemmas.

\begin{lemma}
  \label{lem:insertion-bd-1}
  Let \(n \in \mathbb{N}\) and suppose \((S,P,T)\) is an insertion point such that:
  \begin{itemize}
  \item \(n < \lh(P)\)
  \item \(n \leq \th(T)\)
  \end{itemize}
  Then \(\partial_n(S) = \partial_n(\insertion S P T)\) and for \(\epsilon \in \{-,+\}\):
  \[ \delta^\epsilon_n(S) \circ \kappa_{S,P,T} \equiv^{\mathsf{max}} \delta_d^\epsilon(\insertion S P T)\]
\end{lemma}
\begin{proof}
  See \small\texttt{Catt/Tree/Insertion/Properties.agda(810)}.
\end{proof}

\begin{definition}
  Let \(n \in \mathbb{N}\), and suppose \(S\) is a tree with branch \(P\) with \(n > \bh(P)\). Then we can define a new branch \(\bound n P\) of \(\bound n S\) given by the same list as \(P\).
\end{definition}

\begin{lemma}
  \label{lem:insertion-bd-2}
  Let \(n \in \mathbb{N}\) and suppose \((S,P,T)\) is an insertion point such that one of the following holds:
  \begin{enumerate}
  \item \(n > \th(T)\) and \(n \leq \lh(P)\)
  \item \(n \geq \lh(P)\)
  \end{enumerate}
  Then \(\insertion {\bound n S} {\bound n P} {\bound n T} = \bound n {\insertion S P T}\) and:
  \[ \inc \epsilon n S \circ \kappa_{S,P,T} \equiv^{\mathsf{max}} \kappa_{\bound n S,\bound n P,\bound n T} \circ \inc \epsilon n {\insertion S P T}\]
  for \(\epsilon \in \{-,+\}\).
\end{lemma}
\begin{proof}
  See \small\texttt{Catt/Tree/Insertion/Properties.agda(938)}.
\end{proof}

\begin{lemma}
  \label{lem:exterior-interior-insertion}
  Let \((S,P,T)\) be an insertion point. Then:
  \[\insertion {\kappa_{S,P,T}} P {\iota_{S,P,T}} \equiv \id\]
\end{lemma}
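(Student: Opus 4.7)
The proof proceeds by induction on the length of the branch $P$, following exactly the recursive structure used to define $\iota_{S,P,T}$, $\kappa_{S,P,T}$, and $\insertion{-}{P}{-}$. At each level we unfold the definition of $\insertion{\kappa}{P}{\iota}$ at the level of labellings and recognise the result as the identity labelling on $\insertion S P T$.

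\textbf{Base case: $P = [k]$ and $S = [S_1,\dots,S_n]$.} Here $\insertion S P T = [S_1,\dots,S_{k-1}] \doubleplus T \doubleplus [S_{k+1},\dots,S_n]$. By the diagrammatic definition of $\kappa$, its labelling on the outer wedge components $[S_1,\dots,S_{k-1}]$ and $[S_{k+1},\dots,S_n]$ is the identity labelling (via the identity maps), while on the middle disc $\Sigma S_k$ the map is $\{\mathcal{U}_T^{\lh(P)}, \mathcal{C}_T^{\lh(P)}\}$. Meanwhile, $\iota$ is the identity labelling on $T$. The definition of $\insertion L {[k]} M$ then takes the $L$-labels before position $k$, the $M$-labels at position $k$, and the $L$-labels after position $k$. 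Plugging in $L = \kappa$ and $M = \iota$, the middle-disc labels of $\kappa$ are discarded in favour of the identity labels from $\iota$ on $T$, and we obtain exactly the identity labelling on $\insertion S P T$. The only subtlety is that the 0-dimensional label sitting at the join between $[S_1,\dots,S_{k-1}]$ and $T$ (respectively $T$ and $[S_{k+1},\dots,S_n]$) is well defined because $\iota$ and $\kappa$ both map the corresponding boundary $\star$-typed variable to the same variable of $\insertion S P T$, which is precisely what the wedge sum guarantees.

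\textbf{Inductive case: $P = k :: P'$, forcing $T = [T_1]$.} Now $\insertion S P T = [S_1,\dots,S_{k-1}, \insertion{S_k}{P'}{T_1}, S_{k+1},\dots,S_n]$. The definitions of $\kappa_{S,P,T}$ and $\iota_{S,P,T}$ use the identity on the outer wedge components and, at position $k$, the suspended maps $\Sigma \kappa_{S_k,P',T_1}$ and $\Sigma \iota_{S_k,P',T_1}$ respectively. Unfolding $\insertion \kappa P \iota$ in this case shows that its $i$-th subtree label for $i \neq k$ is the identity labelling on $S_i$, while at position $k$ it is $\insertion {\kappa_{S_k,P',T_1}} {P'} {\iota_{S_k,P',T_1}}$ (transported across the suspension). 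By the induction hypothesis applied to the insertion point $(S_k,P',T_1)$, this equals the identity labelling on $\insertion{S_k}{P'}{T_1}$. Combined with the identity labellings on the other components, we conclude that $\insertion \kappa P \iota$ is the identity labelling on $\insertion S P T$.

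\textbf{Expected obstacle.} The content of the argument is almost entirely a definitional unfolding, so no deep ideas are required. The only delicate point is the bookkeeping of the 0-dimensional labels at the wedge-sum interfaces in the base case: one must check that the terms $\kappa$ assigns to the terminal $\star$-variable of each outer component agree with the terms $\iota$ assigns to the initial $\star$-variable of $T$, so that the stitched labelling really is the identity. This is handled by the condition on $\sigma \vee \tau$ in the definition of wedge-sum substitutions, together with the base-case convention for $\iota$ being $\id$. A clean presentation therefore simply recalls those definitions and observes that all relevant endpoints coincide.
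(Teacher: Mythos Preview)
Your inductive argument is correct and is the natural approach: unfold the recursive definitions of $\kappa$, $\iota$, and the inserted labelling in parallel, and observe that the discarded middle component of $\kappa$ is the only place where $\kappa$ differs from the identity, so replacing it by the identity labelling $\iota$ on $T$ yields the identity on $\insertion S P T$. The paper does not actually give a proof of this lemma in the text; it is one of the straightforward case-analysis lemmas whose proofs are deferred to the accompanying Agda formalisation, and your argument is exactly the kind of induction one would expect that formalisation to carry out.
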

\begin{proof}
  See \small\texttt{Catt/Tree/Insertion/Properties.agda(233)}.
\end{proof}

\begin{lemma}
  \label{lem:comp-to-tm}
  For all \(n\) and \(S\), \(\mathcal{C}_S^n \leadsto^* \mathcal{T}_S^n\).
\end{lemma}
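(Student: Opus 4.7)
\noindent
My plan is to case split on whether \(S\) is a disc, directly following the definition of \(\mathcal{T}_S^n\). The non-disc branch is immediate: by definition \(\mathcal{T}_S^n = \mathcal{C}_S^n\), so the required reduction holds by reflexivity of \(\leadsto^*\).

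The substantive case is when \(S\) is a disc, which forces \(S = D^n\) in order for \(\mathcal{T}_S^n = d_n\) to pick out a variable of \(\lfloor S \rfloor\). Here the goal is a reduction \(\mathcal{C}_{D^n}^n \leadsto^* d_n\), and the plan is to achieve it in a single step of disc removal. Unfolding the definition gives \(\mathcal{C}_{D^n}^n = \Coh{D^n}{\mathcal{U}_{D^n}^n}{\id_{D^n}}\), whereas disc removal fires on the syntactic pattern \(\Coh{D^n}{\mathcal{U}_{D^n}^n}{\{A,s\}}\). So the only thing I need to supply is a presentation of \(\id_{D^n}\) in this \(\{A,s\}\) form.

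This is where the argument collapses to a tiny observation: substitutions out of a disc are in bijection with pairs consisting of a type and a term of matching dimension, and under this bijection the identity substitution on \(D^n\) corresponds to \((\mathcal{U}_{D^n}^n, d_n)\), since \(d_n\) has type \(\mathcal{U}_{D^n}^n\) in \(D^n\) and is sent to itself by \(\id\). Hence \(\id_{D^n} \equiv \{\mathcal{U}_{D^n}^n, d_n\}\), and disc removal applied with \(A := \mathcal{U}_{D^n}^n\) and \(s := d_n\) yields the single step \(\mathcal{C}_{D^n}^n \leadsto d_n = \mathcal{T}_{D^n}^n\).

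I do not anticipate any real obstacle here: no induction, no appeal to insertion, and no reliance on the auxiliary lemmas developed for insertion. The only step deserving care is the identification \(\id_{D^n} \equiv \{\mathcal{U}_{D^n}^n, d_n\}\), which amounts to unfolding the disc-substitution convention on the identity.
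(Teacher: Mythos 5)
Your proof is correct and takes essentially the same approach as the paper: case-split on whether $S$ is a disc, use reflexivity in the non-disc case, and apply a single disc removal in the disc case. The paper's proof is a one-liner, while you usefully spell out the identification $\id_{D^n} \equiv \{\mathcal{U}_{D^n}^n, d_n\}$ that makes the disc removal redex visible (an identification the paper also uses, silently, elsewhere in the confluence proof).
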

\begin{proof}
  The only case in which \(\mathcal{C}_S^n \neq \mathcal{T}_S^n\) is when \(S = D^n\), in which case a single disc removal gives the required reduction.
\end{proof}

\noindent We can now give a proof of \cref{lem:standard-type-exterior-reduct}.
\begin{proof}[Proof of \cref{lem:standard-type-exterior-reduct}]
  We proceed by induction on \(n\), starting with the statement for types. If \(n = 0\) then both standard types are \(\star\), so we are done. Otherwise we have:
  \begin{alignat*}{3}
    &\mathcal{U}_S^{1 + n}\sub {\kappa_{S,P,T}} &&{}\equiv{} &&\mathcal{T}_{\bound n S}^n\sub{\inc - n S \circ \kappa_{S,P,T}}\\
    &&&&&\to_{\mathcal{U}_S^n\sub{\kappa_{S,P,T}}}\\
    &&&&&\mathcal{T}_{\bound n S}^n\sub{\inc + n S \circ \kappa_{S,P,T}}
  \intertext{and}
    &\mathcal{U}_{\insertion S P T}^{1 + n} &&{}\equiv{} &&\mathcal{T}_{\bound n {\insertion S P T}}^n \sub{\inc - n {\insertion S P T}} \\
  &&&&&\to_{\mathcal{U}_{\insertion S P T}^n}\\
  &&&&&\mathcal{T}_{\bound n {\insertion S P T}}^n \sub{\inc + n {\insertion S P T}}
  \end{alignat*}
  By inductive hypothesis: \(\mathcal{U}_S^n \sub{\kappa_{S,P,T}} \leadsto^* \mathcal{U}_{\insertion S P T}^n\), and so we need to show that:
  \[\mathcal{T}_{\bound n S}^n\sub{\inc \epsilon n S \circ \kappa_{S,P,T}} \leadsto^* \mathcal{T}_{\bound n {\insertion S P T}}^n \sub{\inc \epsilon n {\insertion S P T}}\]
  We now note that either the conditions for \cref{lem:insertion-bd-1} or \cref{lem:insertion-bd-2} must hold. If conditions for \cref{lem:insertion-bd-1} hold then (as everything is well typed in \(\Catt_\emptyset\)) we get that the required reduction is trivial. Therefore we focus on the second case. Here we get from \cref{lem:insertion-bd-2} that:
  \[\mathcal{T}_{\bound n S}^n\sub{\inc \epsilon n S \circ \kappa_{S,P,T}} \equiv \mathcal{T}_{\bound n S}^n\sub{\kappa_{\bound n S,\bound n P,\bound n T} \circ \inc \epsilon n {\insertion S P T}}\]
  Then we can apply the inductive hypothesis for terms as if \(n \leq \dim(S)\) then \(\dim(\bound n S) = n\) and otherwise \(\bound n S = S\) is not linear, and so we get the required reduction.

  Now we move on to the case for terms. If \(\mathcal{T}_S^n\) is a variable, then we must have that \(S\) is linear \(S = D^n\). We must also have in this case that \(\mathcal{T}_S^n = \lfloor P \rfloor\). Then by \cref{lem:insertion-subs-comm}, \(\mathcal{T}_S^n \sub {\kappa_{S,P,T}} \equiv \mathcal{C}_T^n \sub {\iota_{S,P,T}}\) and then by \cref{lem:disc-insertion-1,lem:comp-to-tm} this reduces to \(\mathcal{T}_{\insertion S P T}^n\) as required. If \(\mathcal{T}_S^n\) is not a variable, then \(\mathcal{T}_S^n \equiv \mathcal{C}_S^n\) and, \(\mathcal{C}_S^n\) cannot be an identity (as either \(S\) is non linear or \(n = \dim(S)\)). By \cref{lem:insertion-subs-comm} and other assumptions we get that \(\mathcal{C}_S^n \sub {\kappa_{S,P,T}}\) admits an insertion along branching point \(P\) and so:
  \begin{alignat*}{2}
    &&&\mathcal{T}_S^n\sub{\kappa_{S,P,T}}\\
    &\equiv{} &&\mathcal{C}_S^n\sub {\kappa_{S,P,T}}\\
    &\leadsto{} &&\Coh {\insertion S P T} {\mathcal{U}_S^n \sub {\kappa_{S,P,T}}} {\insertion {\kappa_{S,P,T}} P {\iota_{S,P,T}}}\\
    &\equiv{} &&\Coh {\insertion S P T} {\mathcal{U}_S^n \sub {\kappa_{S,P,T}}} {\id}\\
    &\leadsto^*{} &&\Coh {\insertion S P T} {\mathcal{U}_{\insertion S P T}^n} {\id}\\
    &\equiv{} &&\mathcal{C}_{\insertion S P T}^n\\
    &\leadsto^*{} &&\mathcal{T}_{\insertion S P T}^n
  \end{alignat*}
  With the second equivalence coming from \cref{lem:exterior-interior-insertion}, the second reduction coming from inductive hypothesis (which is well founded as the proof for types only uses the proof for terms on strictly lower values of \(n\)), and the last reduction coming from \cref{lem:comp-to-tm}.
\end{proof}

\begin{lemma}
  \label{lem:insert-lin-height}
  Let \((S,P,T)\) be an insertion point. Further assume \(S\) is not linear. Then \(\th(\insertion S P T) \geq \th(S)\).
\end{lemma}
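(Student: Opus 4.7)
The plan is to proceed by induction on the length of the branch $P$, splitting on whether the trunk height of $S$ is zero.

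If $\th(S) = 0$ the inequality $\th(\insertion S P T) \geq 0 = \th(S)$ is immediate, so the interesting case is $\th(S) \geq 1$. Here $S = [S_1]$ has a single child, forcing the first entry of $P$ to equal $1$. The key observation is that $S_1$ must itself be non-linear: from $\th(S) = 1 + \th(S_1)$ and $\dim(S) = 1 + \dim(S_1)$, the tree $S$ is linear iff $S_1$ is, so the non-linearity hypothesis passes to $S_1$. An immediate consequence is that $P \neq [1]$, since otherwise the branch condition would require $S^P = S_1$ to be linear, contradicting the hypothesis. Hence $|P| \geq 2$.

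Writing $P = 1 :: P'$ with $|P'| \geq 1$, one has $\bh(P') = \bh(P) - 1$. The insertion-point condition $\bh(P) \leq \th(T)$ then gives $\th(T) \geq 1$, so $T = [T_1]$ with $\th(T_1) = \th(T) - 1 \geq \bh(P')$. Thus $(S_1, P', T_1)$ is again an insertion point, with $S_1$ non-linear and $|P'| < |P|$. By the recursive clause of insertion in the case $|P| \geq 2$,
\[ \insertion S P T \;=\; [\,\insertion{S_1}{P'}{T_1}\,], \]
so $\th(\insertion S P T) = 1 + \th(\insertion{S_1}{P'}{T_1})$. Applying the inductive hypothesis to $(S_1, P', T_1)$ yields $\th(\insertion{S_1}{P'}{T_1}) \geq \th(S_1)$, from which $\th(\insertion S P T) \geq 1 + \th(S_1) = \th(S)$, as required.

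There is no substantial obstacle in this proof; the induction is a direct structural one. The only conceptual point worth noting is that the non-linearity hypothesis on $S$ is exactly what is needed to keep the argument going: it propagates through $S \mapsto S_1$ as we descend the trunk, and simultaneously guarantees at each level that $P$ cannot terminate at its very first index (which would force the current tree to be linear and collapse the induction).
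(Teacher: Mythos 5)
Your proof is correct, and the induction on the length of $P$ with the key observation that non-linearity of $S$ propagates to $S_1$ (ruling out the problematic case $P = [1]$) is exactly the right argument. The paper omits its own proof of this lemma, deferring to the Agda formalization as a ``straightforward'' case analysis, so there is nothing to compare against; your structural induction is the natural and expected route.
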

\begin{proof}
  See \small\texttt{Catt/Tree/Insertion/Properties.agda(1165)}.
\end{proof}

By this lemma (as \(T\) is not linear), we have \[\mathcal{U}_T^{\lh(P)}\sub{\kappa_{T,Q,U}} \leadsto^* \mathcal{U}_{\insertion T P Q}^{\lh(P)}\]and so \(\mathcal{C}_T^{\lh(P)} \sub \tau \leadsto^* \mathcal{C}_{\insertion T Q U}^{\lh(P)} \sub {\insertion \tau Q \mu}\). Let \(c'\) be the term obtained by applying this further reduction to the appropriate argument. Now by \cref{lem:insert-lin-height}, we have that \(\th(\insertion T Q U) \geq \th(T)\) and so by \cref{lem:insertable}, there is \(c' \leadsto^* c''\) with:
\begin{align*}
  c'' =_{\dim(a)}{} &\Coh {\insertion S P {(\insertion T Q U)}} {\\&A \sub {\kappa_{S,P,\insertion T Q U}}} {\insertion \sigma P {(\insertion \tau Q \mu)}}
\end{align*}
We now examine how \(b\) reduces. We first give a construction of a branch on an inserted context and give some properties of the resulting insertion.

\begin{definition}
  Let \((S,P,T)\) be an insertion point. Further assume \(T\) is not linear and has a branch \(Q\). Then there is a branch \(\insertion S P Q\) of \(\insertion S P T\) with the same height as \(Q\). The new branch points to the same place as \(Q\), except in the copy of \(T\) which exists in \(\insertion S P T\).
\end{definition}

\begin{lemma}
  \label{lem:inserted-insertion}
  Let \((S,P,T)\) be an insertion point. Further assume \(T\) is not linear and has a branch \(Q\). Then \(\lfloor \insertion S P Q \rfloor \equiv Q \sub{\iota_{S,P,T}}\). Further if \((T,Q,U)\) is an insertion point, then
  \begin{alignat*}{3}
    &\insertion S P {(\insertion T Q U)} & &= & &\insertion {(\insertion S P T)} {\insertion S P Q} U\\
    &\kappa_{S,P,\insertion T Q U} &&=^{\mathsf{max}} &&\kappa_{S,P,T} \circ \kappa_{\insertion S P T, \insertion S P Q, U}\\
    &\insertion \sigma P {(\insertion \tau Q \mu)} &&\equiv^{\mathsf{max}} &&\insertion {(\insertion \sigma P \tau)} {\insertion S P Q} \mu
  \end{alignat*}
  for any \(\sigma : S \to \Gamma\), \(\tau : T \to \Gamma\), and \(\mu : U \to \Gamma\).
\end{lemma}
\begin{proof}
  See\begin{tabular}[t]{l}
    \small\texttt{Catt/Tree/Insertion/Properties.agda(1190)}\\
    \small\texttt{Catt/Typing/Insertion/Equality.agda(489)}.
  \end{tabular}\\[-9pt]\qedhere
\end{proof}

\noindent
As \(T\) is not linear, there is a branch \(\insertion S P Q\) of \(\insertion S P T\) and we get the following by \cref{lem:inserted-insertion,lem:insertion-subs-comm}:
\begin{align*}
  \insertion S P Q \sub {\insertion \sigma P \tau}
  &\,\equiv\, Q \sub {\iota_{S,P,T} \circ (\insertion \sigma P \tau)}
  \,\equiv\, Q \sub \tau
  \,\equiv\, \mathcal{C}_U^{\lh(Q)}\sub\mu
\end{align*}
Since \(\th(U) \geq \bh(Q) = \bh(\insertion S P Q)\) we can reduce \(b\) to \(b'\) by insertion as follows:
\begin{align*}
  b' \equiv{} &\Coh {\insertion {(\insertion S P T)} {\insertion S P Q} U} {\\&A \sub {\kappa_{S,P,T} \circ \kappa_{\insertion S P T, \insertion S P Q, U}}} {\insertion {(\insertion \sigma P \tau)} {\insertion S P Q} \mu}
\end{align*}
and then by \cref{lem:inserted-insertion} we get \(b' =_{\dim(a)-1} c''\) as required.

\mypara{Insertion.} Suppose \(a \leadsto c\) is also an insertion, along a branch \(Q\) of \(S\). We now split on whether \(\lfloor P \rfloor = \lfloor Q \rfloor\). First suppose \(\lfloor P \rfloor = \lfloor Q \rfloor\); then by the following lemma, we have \(b =_{\dim(a)} c\).

\begin{lemma}
  \label{lem:insertion-same}
  Suppose \((S,P,T)\) and \((S,Q,T)\) are insertion points with \(\lfloor P \rfloor \equiv \lfloor Q \rfloor\). Then \(\insertion S P T = \insertion S Q T\) and \(\kappa_{S,P,T} \equiv^{\mathsf{max}} \kappa_{S,Q,T}\). If we further have \(\sigma : S \to \Gamma\) and \(\tau : T \to \Gamma\), then \(\insertion \sigma P \tau \equiv^{\mathsf{max}} \insertion \sigma Q \tau\).
\end{lemma}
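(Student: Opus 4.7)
The plan is to first analyze the structural relationship between $P$ and $Q$, then reduce to a single concrete base case via two nested inductions. Since $\lfloor P \rfloor \equiv \lfloor Q \rfloor$ and a locally maximal variable uniquely determines the subtree in which it lies at every level of the tree decomposition, $P$ and $Q$ must share a common prefix of indices. Once the shorter branch, say $P$, has ended at a linear subtree $S^P$ picking out its top-dimensional variable, the longer branch $Q$ can continue to pick out the same variable only by descending into the unique available child at each subsequent step. Hence, without loss of generality, $Q = P \doubleplus [\underbrace{1, \ldots, 1}_{m}]$ for some $m \geq 0$.

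Using transitivity of syntactic equality and of $\equiv^{\mathsf{max}}$, induct on $m$ to reduce to the case $Q = P \doubleplus [1]$. In this case, induct on $|P|$. For the inductive step, $|P| \geq 2$, write $P = k :: P'$ and $Q = k :: Q'$ with $Q' = P' \doubleplus [1]$; the recursive clauses of the three definitions reduce everything to the analogous insertions at $(S_k, P', T_1)$ and $(S_k, Q', T_1)$, to which the induction hypothesis applies in lockstep.

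The base case $|P| = 1$, so $P = [k]$ and $Q = [k, 1]$, must be checked by direct computation. Since $\bh(Q) = 1 \leq \th(T)$, we necessarily have $T = [T_1]$. The first clause of the definition at $P$ gives the tree $[S_1, \ldots, S_{k-1}] \doubleplus [T_1] \doubleplus [S_{k+1}, \ldots, S_n]$, while the second clause at $Q$ produces $[S_1, \ldots, S_{k-1}, \insertion{S_k}{[1]}{T_1}, S_{k+1}, \ldots, S_n]$; since $\insertion{S_k}{[1]}{T_1} = T_1$, these two trees coincide literally. The main obstacle is verifying that the two exterior substitutions agree on the locally maximal variable $\lfloor P \rfloor$: the map $\kappa_{S,P,T}$ sends it via $\{\mathcal{U}_T^{\lh(P)}, \mathcal{C}_T^{\lh(P)}\}$, whereas $\kappa_{S,Q,T}$ routes it through the suspension of the analogous map built over $T_1$. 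These agree by \cref{lem:susp-unbiased}, which identifies $\Sigma \mathcal{C}_{T_1}^{\lh(P)-1}$ with $\mathcal{C}_T^{\lh(P)}$ and similarly for the unbiased type, giving the $\equiv^{\mathsf{max}}$ agreement. The corresponding statement for the inserted substitution then follows by unwinding the labelling constructions, noting that the only label that can differ between the two sides is the one attached to $\lfloor P \rfloor$, which is in both cases computed from the same data in $\tau$.
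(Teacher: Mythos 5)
Your proof is correct. The paper omits the proof of this lemma, deferring to the Agda formalization, so a direct comparison is not possible, but your decomposition is a natural and valid one: reducing first to the case $Q = P \doubleplus [1,\ldots,1]$ (since both branches end at the same leaf, one must extend the other along the unique chain in the linear subtree $S^{P}$), then by transitivity to $Q = P \doubleplus [1]$, then inducting on $|P|$ so that the recursive clauses of the definitions of $\insertion S P T$, $\kappa$, and the inserted labelling push everything down to the base case $P = [k]$, $Q = [k,1]$, which is discharged by \cref{lem:susp-unbiased} and the computation $\insertion{S_k}{[1]}{T_1} = T_1$. A couple of small points worth noting: the reduction to $m=1$ implicitly uses that every intermediate $(S, Q_j, T)$ is an insertion point, which holds since $\bh(Q_j) \leq \bh(Q) \leq \th(T)$; and in the base case the inserted labellings in fact agree syntactically, because the recursive clause for $Q=[k,1]$ produces $\insertion{L_k}{[1]}{M_1}$ in the middle slot, and for $S_k$ a singleton list this unfolds literally to $M_1$ — so the caveat about the label ``attached to $\lfloor P \rfloor$'' is more cautious than necessary there, though $\equiv^{\mathsf{max}}$ is what propagates through the inductive step.
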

\begin{proof}
  See \small\texttt{Catt/Tree/Insertion/Properties.agda(290)}.
\end{proof}

Suppose now that \(\lfloor P \rfloor \neq \lfloor Q \rfloor\), and that \(Q \sub \sigma \equiv \mathcal{C}_U^{\lh(Q)} \sub \mu\), such that
\[c \equiv \Coh {\insertion S Q U} {A \sub {\kappa_{S,Q,U}}} {\insertion \sigma Q \mu}\]
For this case we define the following branching point of the inserted tree.

\begin{definition}
  Let \((S,P,T)\) be an insertion point and \(Q\) be a branch of \(S\) such that \(\lfloor P \rfloor \neq \lfloor Q \rfloor\). Then we can define a new branch \(\insertion Q P T\) of \(\insertion S P T\) with \(\bh(\insertion Q P T) = \bh(Q)\) and \(\lfloor \insertion Q P T \rfloor \equiv Q \sub {\kappa_{S,P,T}}\). Intuitively this branch refers to the same part of \(S\), and is unaffected by \(T\) being inserted in.
\end{definition}

We now want \(b\) and \(c\) to further reduce as follows:
      \begin{align*}
        b &\leadsto b' \equiv \Coh {\insertion {(\insertion S P T)} {\insertion Q P T} U} {\\&A\sub{\kappa_{S,P,T} \circ \kappa_{\insertion S P T, \insertion Q P T, U}}} {\insertion {(\insertion \sigma P \tau)} {\insertion Q P T} \mu}\\
        c &\leadsto c' \equiv \Coh {\insertion {(\insertion S Q U)} {\insertion P Q U} T} {\\&A\sub{\kappa_{S,Q,U} \circ \kappa_{\insertion S Q U, \insertion P Q U, T}}} {\insertion {(\insertion \sigma Q \mu)} {\insertion P Q U} \tau}
      \end{align*}
      We will show that the first reduction is valid and the other will hold by symmetry. We need \(b\) to not be an identity. First suppose that \(a\) is typed by the coherence rule. Then if \(A \equiv \arr s B t\) we must have \(\supp(s) = \supp(S)\). However \(S\) is certainly not linear, as it has two distinct leaves. Therefore \(s\) cannot be a variable and so \(s \sub {\kappa_{S,P,T}}\) is also not a variable, making \(b\) not an identity. Now suppose instead that \(a\) is a composite. Then, if \(\partial^-(S)\) is not linear, we can apply the same logic as before, and so \(S\) must have linear height one less than its dimension. However, this means that both \(\lfloor P \rfloor\) and \(\lfloor Q \rfloor\) are distinct variables with the same dimension as the tree, and so \(\dim(\insertion S P T) = \dim(S) = \dim(a)\) and so \(b\) cannot be an identity.

      We also note:
      \begin{align*}
        \insertion Q P T \sub {\insertion \sigma P \tau} &\equiv Q \sub{\kappa} \sub{\insertion \sigma P \tau}\\
                                                                         &\equiv Q \sub{\sigma}\\
                                                                         &\equiv \mathcal{C}_U^{\lh(Q)}\sub\mu\\
                                                                         &\equiv \mathcal{C}_U^{\lh(\insertion Q P T)}\sub\mu
      \end{align*}
      as required for the insertion, with the third equality coming from the following lemma.

      \begin{lemma}
        \label{lem:ins-comm-max}
        For insertion redex \((S,P,T,\Gamma,\sigma,\tau)\), the following hold:
        \[ \iota_{S,P,T} \circ (\insertion \sigma P \tau) \equiv \tau\]
        \[ \kappa_{S,P,T} \circ (\insertion \sigma P \tau) \equiv^{\mathsf{max}} \sigma \]
        These imply the equality results from \cref{sec:insert-props}.
      \end{lemma}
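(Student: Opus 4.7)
The plan is to proceed by induction on the branch $P$, unfolding the inductive definitions of $\iota_{S,P,T}$, $\kappa_{S,P,T}$, and the inserted labelling $\insertion L P M$ from Section~\ref{sec:construction}. Throughout, I work directly with labellings rather than raw substitutions, since both $\iota$, $\kappa$ and the inserted substitution are defined most cleanly in that language, and I use the fact (noted just after the definitions) that composing a substitution with a labelled substitution amounts to substituting into each entry of the label.

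For the base case $P = [k]$, write $S = [S_1,\dots,S_n]$, $T = [T_1,\dots,T_m]$, and let $L, M$ be the labellings corresponding to $\sigma, \tau$. The interior substitution $\iota_{S,P,T}$ is the identity labelling of $T$ included into the middle wedge summand of $\insertion S P T$; composing with $\insertion L P M$ simply reads off the $M$-portion of the spliced labelling, giving $M$ syntactically, hence $\iota \circ (\insertion \sigma P \tau) \equiv \tau$. For $\kappa_{S,P,T}$, the identity components on the outer wedge summands similarly return the corresponding $L_i$ and terminal labels of $L$ syntactically, so on every locally maximal variable of $S$ that does not lie in $S_k$ we recover $\sigma$ on the nose. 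The only remaining locally maximal variable is $\lfloor P \rfloor$, the unique top-dimensional variable of the disc $\Sigma\lfloor S_k\rfloor$; on this variable $\kappa$ evaluates to $\mathcal{C}_T^{\lh(P)}$ (composed with the identity $\{\mathcal U_T^n,\mathcal C_T^n\}$-labelling), so composing with $\insertion L P M$ gives $\mathcal{C}_T^{\lh(P)}\sub{\iota\circ(\insertion\sigma P \tau)} \equiv \mathcal{C}_T^{\lh(P)}\sub\tau$ by the first claim. By the definition of an insertion redex this is syntactically equal to $P\sub\sigma$, establishing $\kappa \circ (\insertion \sigma P \tau) \equiv^{\mathsf{max}} \sigma$.

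For the inductive step $P = k :: P'$, the trunk-height hypothesis forces $T = [T_1]$, and the constructions of $\iota_{S,P,T}$ and $\kappa_{S,P,T}$ factor through the suspension of $\iota_{S_k,P',T_1}$ and $\kappa_{S_k,P',T_1}$, wedged with identities on the other summands. The inserted labelling $\insertion L P M$ is defined by replacing the $k$-th component $L_k$ with $\insertion{L_k}{P'}{M_1}$ and leaving everything else untouched. Composition in a wedge sum is componentwise, so on the outer summands we again obtain $\sigma$ syntactically, and the middle summand reduces, via functoriality of $\Sigma$ with respect to the substitution operation (Lemma~\ref{lem:insertion-sub-susp}), to the suspension of $\iota_{S_k,P',T_1} \circ (\insertion{\sigma_k}{P'}{\tau_1})$ respectively $\kappa_{S_k,P',T_1} \circ (\insertion{\sigma_k}{P'}{\tau_1})$, where $\sigma_k,\tau_1$ are the restrictions of $\sigma,\tau$. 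The induction hypothesis then supplies syntactic equality with $\tau_1$ and maximal syntactic equality with $\sigma_k$; since suspension is a purely syntactic operation, both equalities transport through $\Sigma$ unchanged, and the locally maximal variables in the suspended middle summand are exactly the suspensions of those of $S_k$, so $\equiv^{\mathsf{max}}$ is preserved.

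The main obstacle I anticipate is the bookkeeping around the identification of locally maximal variables under suspension and wedge decomposition, and making sure that the term $P\sub\sigma$ picked out by the base case is really syntactically (not just definitionally) equal to $\mathcal{C}_T^{\lh(P)}\sub\tau$. The latter is clean because the insertion-redex definition explicitly demands $P\sub\sigma \equiv \mathcal{C}_T^{\lh(P)}\sub\tau$; the former is a matter of checking that the functoriality equation $\Sigma(\sigma\circ\mu) \equiv \Sigma\sigma \circ \Sigma\mu$ and the evident componentwise behaviour of composition in a wedge sum hold on the nose, which is straightforward but must be done carefully. Given those, the derived statements of Lemma~\ref{lem:insertion-subs-comm} follow immediately, as both $\equiv$ and $\equiv^{\mathsf{max}}$ on valid substitutions imply $=$ via the conversion rule.
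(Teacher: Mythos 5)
Your proposal takes essentially the same route as the paper: the paper only says this family of facts is proved ``by induction on $P$'' and defers the details to the Agda formalisation (it explicitly groups such lemmas under ``straightforward lemmas which consist of a large case analysis''), and your unfolding of the base and step cases by reading off the wedge/suspension definitions of $\iota$, $\kappa$, and the inserted labelling is a faithful version of that induction. The one point worth tidying: the fact you invoke for pushing composition through a suspended middle summand is the functoriality of $\Sigma$ (stated in the prose of Section~\ref{sec:trees}, not in a numbered lemma), not \cref{lem:insertion-sub-susp}, which instead records that insertion commutes with suspension and with post-composition; the underlying fact you need is true but the citation is misattributed. You also correctly anticipate where the genuine care lies --- namely that the inserted labelling overwrites the $0$-dimensional flanking labels $s_{k-1},s_k$ with $t_0,t_1$, so $\kappa\circ(\insertion\sigma P\tau)$ can only be claimed to agree with $\sigma$ on locally maximal variables, which is exactly why the paper formulates the second equation with $\equiv^{\mathsf{max}}$ rather than $\equiv$.
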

      \begin{proof}
        See \small\texttt{Catt/Tree/Insertion/Properties.agda(108)}.
      \end{proof}
      Lastly the trunk height condition is satisfied as \(\bh(Q) = \bh(\insertion Q P T)\), and so both reductions are valid insertions. We now give the following definition and lemmas, from which it will follow that \(b' =_{\dim(a)} c'\).

      \begin{definition}
  We define a variant of the inserted substitution, and write it \(\insertionprime \sigma P \tau\). Whereas the original uses as many terms from \(\tau\) as possible, the variant uses as many terms from \(\sigma\) as possible. More precisely, we define \(\insertionprime L {[k]} M\) to be:
    \[\vcenter{\hbox{\raisebox{-5pt}{\(s_0\)}{\(L_1\)}\raisebox{-5pt}{\(s_1\)}}}\ \cdots\ \vcenter{\hbox{\(L_{k-1}\)\raisebox{-5pt}{\(\bm{s_k}\)}\(M_1\)\raisebox{-5pt}{\(t_2\)}}}\ \cdots\  \vcenter{\hbox{\(M_m\)\raisebox{-5pt}{\(\bm{s_{k+1}}\)}\(L_{k+1}\)\raisebox{-5pt}{\(s_{k+1}\)}}}\ \cdots\ \vcenter{\hbox{\(L_n\)\raisebox{-5pt}{\(s_n\)}}}\]
    and \(\insertionprime L {k :: P'} M\) as:
    \[\vcenter{\hbox{\raisebox{-5pt}{\(s_0\)}{\(L_1\)}\raisebox{-5pt}{\(s_1\)}}}\ \cdots\ \vcenter{\hbox{{\(L_{k-1}\)}\raisebox{-5pt}{\(\bm{s_k}\)}\((\insertion {L_k} {P'} {M_1})\)\raisebox{-5pt}{\(\bm{s_{k+1}}\)}\(L_{k+1}\)\raisebox{-5pt}{\(s_{k+1}\)}}}\ \cdots\ \vcenter{\hbox{\(L_n\)\raisebox{-5pt}{\(s_n\)}}}\]
    where the terms in bold have been modified from the original definition. In the edge case where \(M = \verb|[]|\), we arbitrarily use \(s_k\) instead of \(s_{k+1}\) for the definition of \(\insertionprime L {[k]} M\).
\end{definition}

\begin{lemma}
  \label{lem:insertionprime}
  The following equality holds
  \[\insertion \sigma P \tau =_{\dim(S)} \insertionprime \sigma P \tau\]
  for any insertion redex \((S,P,T,\Gamma,\sigma,\tau)\).
\end{lemma}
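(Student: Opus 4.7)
The plan is to proceed by case analysis on $P$. Comparing the definitions, in both cases the labellings $\insertion L P M$ and $\insertionprime L P M$ are identical at every position except at two $0$-dimensional labels sitting at the seam where the inserted copy of $T$ meets the host tree $S$: the unprimed version uses the endpoint labels inherited from $M$ (namely $t_0$ and $t_m$ when $P = [k]$, or $t_0$ and $t_1$ when $P = k :: P'$), whereas the primed version uses the corresponding labels inherited from $L$. Crucially, even in the nested case the inner sub-labelling $\insertion{L_k}{P'}{M_1}$ appears identically in both versions, so no recursive appeal to the lemma is required for it. After applying the structural congruence rules of \cref{fig:def-eq}, the task reduces to proving the two seam equalities $\Gamma \vdash s_\ast =_{\dim(S)} t_\ast$.

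For these, I would exploit the insertion-redex condition $\lfloor P \rfloor \sub \sigma \equiv \mathcal{C}_T^{\lh(P)} \sub \tau$, which forces $\sigma(\lfloor P \rfloor)$ to be literally the coherence $\Coh T {\mathcal{U}_T^{\lh(P)}} \tau$. As such it has canonical type $\mathcal{U}_T^{\lh(P)} \sub \tau$ by the coherence typing rule, while as the image of the variable $\lfloor P \rfloor : A$ it must also admit the type $A \sub \sigma$ in order for $\sigma$ to be valid. The conversion rule therefore packages this as
\[\Gamma \vdash A \sub \sigma = \mathcal{U}_T^{\lh(P)} \sub \tau.\]
Iteratively inverting the arrow-type congruence rule and descending from dimension $\lh(P)$ down to dimension $0$ extracts the desired $0$-dimensional equalities: the innermost source and target of $A \sub \sigma$ are precisely the seam labels from the $L$ side, while those of $\mathcal{U}_T^{\lh(P)} \sub \tau$ are the seam labels from the $M$ side.

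For the dimension bookkeeping, the crucial observation is that every term appearing inside $A$ or $\mathcal{U}_T^{\lh(P)}$ has dimension strictly less than $\lh(P) \leq \dim(S)$. Consequently the type equality, and hence each extracted component equality, can be witnessed by a derivation that only invokes generators $(\Delta',s',t') \in \mathsf{sua}$ with $\dim(s') < \dim(S)$, which is exactly what $=_{\dim(S)}$ demands. The final reassembly to a substitution-level equality uses only structural congruences and introduces no further generators.

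The hard part will be justifying the inversion step with its dimension tracking: any derivation of $\Gamma \vdash \arr s A t = \arr {s'} {A'} {t'}$ must be transformable, without increasing the dimension of invoked generators, into separate derivations of $\Gamma \vdash s = s'$, $\Gamma \vdash A = A'$, and $\Gamma \vdash t = t'$. This is a standard inversion-of-equality principle, but one requiring a careful induction on equality derivations; it is precisely where the Agda formalisation carries the detailed bookkeeping. Granted this inversion, the remainder is the routine case analysis on $P$ described above.
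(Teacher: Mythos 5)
The paper does not give a proof of this lemma (it is among those the authors flag as formalised in Agda with the prose proof omitted), so there is no text to compare against directly; your argument must be judged on its own merits. Your proof is essentially sound: you correctly isolate that $\insertion L P M$ and $\insertionprime L P M$ agree everywhere except at two $0$-dimensional labels at the seam, including the observation that the recursive sub-labelling $\insertion{L_k}{P'}{M_1}$ is the \emph{unprimed} one on both sides, so no recursive appeal is needed. You then derive the seam-label equalities from the two typings of $\lfloor P \rfloor \sub \sigma \equiv \mathcal{C}_T^{\lh(P)}\sub\tau$ and invert the resulting type equality $\Gamma \vdash A\sub\sigma = \mathcal{U}_T^{\lh(P)}\sub\tau$ dimension by dimension, tracking that only generators below $\dim(S)$ are invoked.

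There is, however, a considerably shorter route which the paper sets up earlier in the same section and which you do not use. Since the two inserted substitutions differ only on $0$-dimensional labels, and in any non-degenerate pasting context no $0$-dimensional variable is locally maximal, one has immediately $\insertion\sigma P\tau \equiv^{\mathsf{max}} \insertionprime\sigma P\tau$; the remark in \cref{sec:reduction} that $\Gamma \vdash \sigma \equiv^{\mathsf{max}} \tau$ for valid substitutions implies $\Gamma \vdash \sigma =_{\dim(\sigma)} \tau$ then closes the lemma in one step. Your proposal effectively re-derives the content of that remark inline --- the uniqueness-of-types argument and the inversion-with-dimension-tracking you identify as the ``hard part'' are exactly the ingredients packaged into the $\equiv^{\mathsf{max}}$ lemma. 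So the approach is not wrong, just less economical. One small additional caution: the $\equiv^{\mathsf{max}}$ remark as stated yields $=_{\dim(\insertion S P T)}$, and your argument similarly yields a bound in terms of $\lh(P)$; both match the claimed $=_{\dim(S)}$ precisely because $\lh(P) \leq \dim(S)$, but one must also observe that nothing of dimension $\geq \dim(S)$ is touched by the congruence, which you do note. That observation is worth keeping explicit, since an arbitrary insertion redex does not \emph{a priori} constrain $\dim(T)$ relative to $\dim(S)$.
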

\begin{proof}
  See \small\texttt{Catt/Tree/Insertion/Typing.agda(188)}.
\end{proof}

\begin{lemma}
  \label{lem:insertion-different}
  Let \((S,P,T)\) and \((S,Q,U)\) be insertion points such that \(\lfloor P \rfloor \neq \lfloor Q \rfloor\). Then we have:
  \[ \insertion {(\insertion S P T)} {\insertion Q P T} U = \insertion {(\insertion S Q U)} {\insertion P Q U} T\]
  \[ \kappa_{S,P,T} \circ \kappa_{\insertion S P T, \insertion Q P T, U} \equiv^{\mathsf{max}} \kappa_{S,Q,U} \circ \kappa_{\insertion S Q U, \insertion P Q U, T}\]
Further
\[ \insertionprime {(\insertion \sigma P \tau)} {\insertion Q P T} \mu \equiv^{\mathsf{max}} \insertionprime {(\insertion \sigma Q \mu)} {\insertion P Q U} \tau\]
for any insertion redexes \((S,P,T,\Gamma,\sigma,\tau)\) and \((S,P,T,\Gamma,\sigma,\mu)\).
\end{lemma}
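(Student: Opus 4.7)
The plan is to induct on the combined length of the branches $P$ and $Q$, case-splitting on how they overlap at the top level of $S = [S_1,\ldots,S_n]$. The guiding principle is that insertion, exterior substitution, and the primed inserted substitution all decompose cleanly along suspensions and wedge sums, so a claim at $S$ reduces to the analogous claim at one of the $S_i$.

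When $P = k_P :: P_r$ and $Q = k_Q :: Q_r$ with $k_P \neq k_Q$, the two insertions act on disjoint top-level factors of the wedge sum $\lfloor S \rfloor = \bigvee_i \Sigma \lfloor S_i \rfloor$. Each of the three equalities decomposes along this wedge and reduces to trivial computations on the two modified factors, with identity maps on the unchanged factors. When instead $P = k :: P_r$ and $Q = k :: Q_r$ with both $P_r, Q_r$ non-empty, the trunk-height hypotheses force $T = [T_1]$ and $U = [U_1]$; both insertions then act within $\Sigma \lfloor S_k \rfloor$, and I would apply the inductive hypothesis to $(S_k, P_r, T_1)$ and $(S_k, Q_r, U_1)$, lifting the resulting equalities back through suspension using \cref{lem:insertion-sub-susp}.

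The interesting case is the \emph{mixed} one, $P = [k]$ and $Q = k :: Q_r$ (and its symmetric partner). Here $S_k$ is linear, so $\Sigma \lfloor S_k \rfloor$ is a disc that the $P$-insertion replaces wholesale with $T$ via $\{\mathcal{U}_T^{\lh(P)}, \mathcal{C}_T^{\lh(P)}\}$, while the $Q$-insertion modifies $S_k$ internally. In one order, the residual branch $\insertion Q P T$ sits inside $T$ with locally maximal variable $Q \sub {\kappa_{S,P,T}}$; in the other order, $S_k$ is first modified internally (remaining linear, so its suspension is still a disc) and then replaced by $T$ as before. Unfolding the exterior substitution and applying \cref{lem:exterior-branching-path} identifies the two resulting trees and the composed $\kappa$'s up to maximal equality. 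For the primed inserted substitution, the key point is that the primed variant selects terms from the outer labelling at the interface, which is exactly what makes both orders agree on locally maximal variables in the overlapping region.

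The main obstacle will be precisely this mixed case: the geometric pictures produced by the two orders look quite different (one replaces an entire disc, the other carves a hole inside it), and matching up the resulting substitutions requires a delicate unfolding of $\kappa$ together with the primed labelling convention. This is also where the primed variant $\insertionprime \sigma P \tau$ earns its keep; with the unprimed inserted substitution, the two orders would make inconsistent choices at the boundary between $T$ and the surrounding tree, and the third equation would fail. Given the extensive bookkeeping across sub-cases, a mechanised check along the lines of the accompanying Agda formalisation is the natural way to carry this out rigorously.
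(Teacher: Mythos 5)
Your overall inductive skeleton (induct on combined branch length, peel off a shared first index, handle the disjoint case at the top level) is the right shape, and your observation that the primed inserted substitution is what makes the two orders agree on the interfaces is a good one. However, the case that you identify as the main obstacle --- $P = [k]$ and $Q = k :: Q_r$ with $Q_r$ non-empty --- is in fact vacuous under the stated hypothesis $\lfloor P \rfloor \neq \lfloor Q \rfloor$, and not noticing this is a genuine gap in the argument.

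Concretely: for $P = [k]$ to be a branch, $S_k$ must be linear. If $Q = k :: Q_r$ with $Q_r$ non-empty, the subtree $(S_k)^{Q_r}$ is a subtree of a linear tree, hence itself linear, and the unique locally maximal variable of $\lfloor (S_k)^{Q_r} \rfloor$ is its top-dimensional variable. Under the chain of suspension embeddings that places $\lfloor (S_k)^{Q_r} \rfloor$ inside $\lfloor S_k \rfloor \hookrightarrow \lfloor S \rfloor$, each suspension sends the top variable to the top variable, so the locally maximal variable of $(S_k)^{Q_r}$ is sent to the top variable of $S_k$, i.e.\ to $\lfloor P \rfloor$. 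Thus $\lfloor P \rfloor = \lfloor Q \rfloor$, contradicting the hypothesis. (One can also see this via leaf height: both work out to $\dim(S_k)+1$.) Indeed, the residual branch $\insertion Q P T$ in the statement is only even defined under the assumption $\lfloor P \rfloor \neq \lfloor Q \rfloor$, so the expressions in the lemma do not type-check in your ``mixed'' case.

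The upshot is that the proof is simpler than you suggest: the only cases are (a) the first indices differ, which decomposes cleanly along the wedge sum (with some index-shifting bookkeeping, but no real interaction), and (b) the first indices agree and both branches have length $\geq 2$, which is the inductive case via the suspension functoriality of \cref{lem:insertion-sub-susp}. There is no ``disc replaced wholesale while also carved internally'' situation to reconcile. This matches the paper's treatment, which omits the proof of this lemma on the grounds that it is a straightforward, if lengthy, case analysis; it is not omitted because of a hidden delicate case. You should remove the mixed-case discussion and instead argue briefly that the shared-index-with-one-singleton case is excluded by $\lfloor P \rfloor \neq \lfloor Q \rfloor$, and that the hypothesis is preserved when passing to $(S_k, P_r, T_1)$ and $(S_k, Q_r, U_1)$ because the embedding $\lfloor S_k \rfloor \hookrightarrow \lfloor S \rfloor$ is injective on variables.
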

\begin{proof}
  See \small\texttt{Catt/Tree/Insertion/Properties.agda(662)}.
\end{proof}

\mypara{Cell reduction.} If \(A \leadsto B\) and \(c \equiv \Coh S B \sigma\) from cell reduction, then if \(c\) is not an identity or disc then it admits insertion to reduce to:
      \[c' \equiv \Coh {\insertion S P T} {B \sub {\kappa_{S,P,T}}} {\insertion \sigma P \tau}\]
      As reduction is compatible with substitution, \(b\) also reduces to~\(c'\). If instead \(c\) was an identity then
      \begin{align*}
        b &\equiv \Coh {\insertion {D^n} P T} {A \sub {\kappa_{S,P,T}}} {\insertion \sigma P \tau}\\
          &\leadsto \Coh {\insertion {D^n} P T} {\mathcal{U}_{D^n}^{n+1}\sub {\kappa_{S,P,T}}} {\insertion \sigma P \tau}\\
          &\leadsto^* \mathbbm{1}\sub{\{\mathcal{U}_{D^n}^n, d_n\} \circ \kappa_{S,P,T} \circ \insertion \sigma P \tau}\\
          &=_{n+1} \mathbbm{1}\sub{\sigma}\\
          &\equiv c
      \end{align*}
      Where the equality is due to \cref{lem:insertion-subs-comm} and \(\{\mathcal{U}_{D^n}^n, d_n\}\) being the identity substitution, and the second reduction is due to the following lemma.

\begin{lemma}
  \label{lem:always-ecr}
  The following reduction holds, even when the left-hand side is an identity:
  \[\Coh \Gamma {\arr s A s} \sigma \leadsto^* \mathbbm{1}\sub{\{s,A\} \circ \sigma}\]
\end{lemma}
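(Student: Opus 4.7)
The plan is to proceed by case analysis on whether the left-hand side $\Coh \Gamma {\arr s A s} \sigma$ is itself already an identity term.

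In the first case, where $\Coh \Gamma {\arr s A s} \sigma$ is \emph{not} an identity, the side condition on endo-coherence removal is satisfied and we obtain a one-step reduction
\[ \Coh \Gamma {\arr s A s} \sigma \leadsto \mathbbm{1}\sub{\{A\sub\sigma, s\sub\sigma\}}. \]
It then suffices to observe that the right-hand side is syntactically equal to $\mathbbm{1}\sub{\{s,A\} \circ \sigma}$, which reduces to the functoriality equation $\{A,s\} \circ \sigma \equiv \{A\sub\sigma, s\sub\sigma\}$. This is a standard compatibility of disc substitutions with composition, provable by induction on $\dim(A)$, since each component of $\{A,s\}$ is produced by iteratively reading off source and target terms of $A$, and these extractions commute with applying $\sigma$.

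In the second case, where $\Coh \Gamma {\arr s A s} \sigma$ \emph{is} an identity, it must by definition have the form $\mathbbm{1}\sub{\{B,t\}} \equiv \mathcal{C}_{D^n}^{n+1}\sub{\{B,t\}}$ for some $B : \Type_{\Gamma'}$ and $t : \Term_{\Gamma'}$ with $n = \dim(B)$. Unpacking, we read off that $\Gamma \equiv D^n$, $A \equiv \mathcal{U}_{D^n}^n$, $s \equiv d_n$ is the top-dimensional variable, and $\sigma \equiv \{B,t\}$. The crucial observation is that the disc substitution $\{\mathcal{U}_{D^n}^n, d_n\} : D^n \to D^n$ is syntactically $\id_{D^n}$, since each boundary variable $d_k^{\pm}$ of $D^n$ is sent to itself. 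Hence $\{s,A\} \circ \sigma \equiv \sigma$, and
\[ \mathbbm{1}\sub{\{s,A\} \circ \sigma} \equiv \Coh{D^n}{\mathcal{U}_{D^n}^{n+1}}{\sigma} \equiv \Coh \Gamma {\arr s A s} \sigma, \]
so the reduction holds in zero steps via reflexivity of $\leadsto^*$.

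The main obstacle here is conceptual rather than technical: one has to notice that the side condition on endo-coherence removal (which rules out applying it when the LHS is already an identity, in order to prevent a trivial infinite reduction loop) is exactly what makes this lemma non-trivial to state, but the reduction nonetheless holds vacuously in that case because LHS and RHS coincide up to syntactic equality. The rest amounts to the routine check that disc substitution is functorial with respect to composition.
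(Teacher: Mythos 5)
Your proposal is correct and follows the same route as the paper: split on whether the left-hand side is an identity, apply endo-coherence removal in the non-identity case (together with the functoriality of disc substitution under composition), and in the identity case unpack $\Gamma \equiv D^n$, $s \equiv d_n$, $A \equiv \mathcal{U}_{D^n}^n$, observe $\{s,A\}$ is the identity substitution, and conclude by reflexivity. The only difference is that you spell out the disc-substitution functoriality $\{A,s\}\circ\sigma \equiv \{A\sub\sigma, s\sub\sigma\}$ explicitly, which the paper leaves implicit.
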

\begin{proof}
  If \(\Coh \Gamma {\arr s A s} \sigma\) is not an identity then we can reduce by endo-coherence removal. Otherwise we have \(\Gamma = D^n\) for some \(n\), \(s \equiv d_n\), and \(A \equiv \mathcal{U}_{D^n}^n\), and so:
  \[\mathbbm{1}\sub{\{s,A\} \circ \sigma} \equiv \mathbbm{1}\sub{\{d_n,\mathcal{U}_{D^n}^n\} \circ \sigma} \equiv \mathbbm{1} \sub{\sigma}\]
It follows that the reduction is trivial.
\end{proof}

      If \(c\) is a disc then:
      \begin{align*}
        b &\equiv \Coh {\insertion {D^n} P T} {A \sub {\kappa_{S,P,T}}} {\insertion \sigma P \tau}\\
          &\leadsto \Coh {\insertion {D^n} P T} {\mathcal{U}_{D^n}^{n}\sub {\kappa_{S,P,T}}} {\insertion \sigma P \tau}\\
          &=_{n} \Coh T {\mathcal{U}_T^n} {\tau}\\
          &\equiv c
      \end{align*}
      Where the equality is given by \cref{lem:disc-insertion-1,lem:standard-type-exterior}.

\mypara{Disc removal.} By assumption, insertion cannot be applied to discs so this case is vacuous.

\mypara{Endo-coherence removal.} Suppose \(A \equiv \arr s B s\) and \(a \leadsto c\) by endo-coherence removal. In this case \(c \equiv \mathbbm{1}\sub{\{A,s\} \circ \sigma}\) and
      \[ b \equiv \Coh {\insertion S P T} {(\arr s B s) \sub {\kappa_{S,P,T}}} {\insertion \sigma P \tau}\]
      which reduces by endo-coherence removal to:
      \[b' \equiv \mathbbm{1} \sub {\{s,A\} \circ \kappa_{S,P,T} \circ (\insertion \sigma P \tau)}\]
      Finally, by \cref{lem:insertion-subs-comm}, we have that \(\kappa_{S,P,T} \circ (\insertion \sigma P \tau) =_{\dim(S)} \sigma\) and so \(b' = _{\dim(S)} c\) and since \(\dim(S) \leq \dim(a)\), we get \(b' =_{\dim(a)} c\) as required.

\mypara{Reduction of non-inserted argument.} Suppose \(\sigma \leadsto \sigma'\) along an argument which is not \(\lfloor P \rfloor\) and \(c \equiv \Coh S A {\sigma'}\). Then as \(P \sub {\sigma'} \equiv \mathcal{C}_T^{\lh(P)}\), an insertion can still be performed on \(c\) to get:
      \[ c \leadsto c' \equiv \Coh {\insertion S P T} {A \sub {\kappa_{S,P,T}}} {\insertion {\sigma'} P \tau}\]
      By the following lemma, \(b \leadsto^* c'\).

      \begin{lemma}
  \label{lem:from-insertion-rto}
  Given \(\sigma \leadsto^* \sigma'\) and \(\tau \leadsto^* \tau'\), then if \(\insertion \sigma P \tau\) is defined, we have:
  \[ \insertion \sigma P \tau \leadsto^* \insertion {\sigma'} P {\tau'}\]
\end{lemma}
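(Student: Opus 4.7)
The plan is to reduce to the case of a single one-step reduction in either $\sigma$ or $\tau$ separately, and then to proceed by induction on the length of the branch $P$. By the structural rules of Figure~\ref{fig:red-rules}, the reduction relation on substitutions only ever rewrites a single term in the image at a time, so any reduction sequence $\sigma \leadsto^* \sigma'$ decomposes into a sequence of one-step rewrites each of which affects one component. Composing such rewrites, it suffices to prove: (i) if $\sigma \leadsto \sigma'$ in one step, then $\insertion \sigma P \tau \leadsto^* \insertion {\sigma'} P \tau$; and symmetrically (ii) if $\tau \leadsto \tau'$ in one step, then $\insertion \sigma P \tau \leadsto^* \insertion \sigma P {\tau'}$. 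Both inserted substitutions are well defined since source and target contexts are preserved under reduction.

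To establish (i) and (ii), I would fix labellings $L, M$ for $\sigma, \tau$ and induct on $P$. In the base case $P = [k]$, the inserted labelling $\insertion L {[k]} M$ is assembled from the sublabellings $L_i$ and boundary terms $s_i$ with $i \neq k$, together with a complete copy of $M$, with $L_k$ and $s_k$ discarded. A one-step rewrite to any $L_i$ or $s_i$ with $i \neq k$ therefore induces exactly one corresponding rewrite in the inserted labelling by the substitution reduction rules, while a rewrite of $L_k$ or $s_k$ is silently discarded (contributing zero steps). Rewrites in components of $M$ are propagated verbatim, since every such component occurs unchanged in $\insertion L {[k]} M$. In the inductive case $P = k :: P'$, we have $T = [T_1]$ and $M$ is a single labelling $M_1$ with boundary terms $t_0, t_1$; the inserted labelling coincides with $L$ except that $L_k$ is replaced by $\insertion {L_k} {P'} {M_1}$ and $s_k$ is replaced by $t_0$. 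A rewrite of $L_i$ or $s_i$ with $i \neq k$ is again propagated directly; a rewrite within $L_k$ is propagated into $\insertion {L_k} {P'} {M_1}$ by the induction hypothesis on the shorter branch $P'$ with trees $S_k, T_1$; a rewrite within $M_1$ is handled analogously by the induction hypothesis; rewrites in $t_0$ or $t_1$ yield corresponding single steps; and rewrites in $s_k$ are discarded.

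The principal difficulty is not conceptual but bookkeeping: for each component of $\sigma$ and $\tau$ one must track whether it appears in $\insertion \sigma P \tau$ verbatim, recursively, or not at all, and in each case verify that the appropriate structural reduction rule produces the required step. Once this accounting is organised the proof is mechanical, with no new lemmas needed beyond the induction hypothesis and the structural rules of Figure~\ref{fig:red-rules}. This is exactly the flavour of argument that benefits from the accompanying Agda formalisation, where the case analysis can be carried out exhaustively without risk of omission.
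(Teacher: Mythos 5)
Your proof is sound; the paper explicitly omits a written proof of this lemma, deferring it to the Agda formalisation as one of the ``straightforward lemmas which consist of a large case analysis,'' and your decomposition---single one-step rewrites in $\sigma$ or $\tau$, induction on the branch $P$, and the trichotomy of components propagated verbatim, propagated recursively, or discarded---is exactly the natural argument. One bookkeeping slip in the accounting: in the base case $P=[k]$ the boundary term $s_{k-1}$ is also replaced (the segment $s_{k-1}, L_k, s_k$ of $L$ becomes $t_0, M_1, t_1, \dots, M_m, t_m$ in $\insertion L {[k]} M$), and in the inductive case the replacements are $s_{k-1} \mapsto t_0$, $L_k \mapsto \insertion{L_k}{P'}{M_1}$, $s_k \mapsto t_1$, not $s_k \mapsto t_0$; this is immaterial to the soundness of your argument, since a rewrite on a discarded component contributes zero steps to the reduction sequence.
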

\begin{proof}
  Each term in \(\insertion \sigma P \tau\) is a term of \(\sigma\) or \(\tau\), and so we can simply apply the given reductions.
\end{proof}

\mypara{Argument reduction on inserted argument.} Suppose \(\tau \leadsto \tau'\), and \(\sigma'\) is \(\sigma\) but with the argument for \(\lfloor P \rfloor\) replaced by \(\Coh T {\mathcal{U}_T^{\lh(P)}} {\tau'}\), such that \(\sigma \leadsto \sigma'\) and \(a \leadsto c \equiv \Coh S A {\sigma'}\). Then \(c\) admits an insertion and reduces as follows:
      \[c \leadsto c' \equiv \Coh {\insertion S P T} {A \sub {\kappa_{S,P,T}}} {\insertion {\sigma'} P {\tau'}}\]
      By \cref{lem:from-insertion-rto} we then have \(b \leadsto^* c'\).

\mypara{Disc removal on inserted argument.} If \(a \leadsto c\) is the result of reducing \(P \sub \sigma\) by disc removal, then \(T\) must equal \(D^{n}\) (with \(n = \lh(P)\)) and \(c \equiv \Coh S A {\sigma'}\) where \(\sigma'\) is \(\sigma\) with the argument for \(\lfloor P \rfloor\) replaced with \(d_n\sub\tau\). Further:
      \begin{equation*}
        b \equiv \Coh {\insertion S P {D^n}} {A \sub {\kappa_{S,P,D^n}}} {\insertion \sigma P \tau}
      \end{equation*}
      \noindent We now give the following lemma.
      \begin{lemma}
  \label{lem:disc-insertion-2}
  Let \(S\) be a tree, and \(P\) a branch of \(S\). Then we get that \(\insertion S P {D^{\lh(P)}} = S\) and \(\kappa_{S,P,D^{\lh(P)}} =^{\mathsf{max}} \id\). Further
  \[\insertion \sigma P \tau \equiv^{\mathsf{max}} \sigma\]
  if \((S,P,D^{\lh(P)},\Gamma,\sigma,\tau)\) is an insertion redex.
\end{lemma}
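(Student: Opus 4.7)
I would proceed by induction on the branch $P$, using \cref{lem:ins-comm-max} to derive the claim about the inserted substitution from the claim about $\kappa$ in both cases.

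\textbf{Base case $P = [k]$.} Since $P$ is a branch, $S_k$ must be linear; as linear trees coincide with disc-trees, $S_k = D^{\lh(P)-1}$, and therefore $D^{\lh(P)} = [S_k]$ as trees. Unfolding the first clause of the definition of insertion, the outer slices concatenate around $[S_k]$ to give back $[S_1,\dots,S_n] = S$, establishing $\insertion S P {D^{\lh(P)}} = S$. For $\kappa$, the construction places the identity on the two outer wedge summands and the disc substitution $\{\mathcal{U}_{D^n}^n, \mathcal{C}_{D^n}^n\} : D^n \to \Sigma S_k = D^n$ in the middle. The only locally maximal variable of the middle piece is the top variable $d_n$, which $\kappa$ sends to $\mathcal{C}_{D^n}^n$; disc removal identifies this with $d_n$, giving $\kappa =^{\mathsf{max}} \id$.

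\textbf{Inductive step $P = k::P'$.} The compatibility condition $\th(D^{\lh(P)}) \geq \bh(P) \geq 1$ forces $D^{\lh(P)}=[T_1]$ with $T_1 = D^{\lh(P)-1} = D^{\lh(P')}$. Applying the induction hypothesis to the insertion point $(S_k, P', D^{\lh(P')})$ gives both $\insertion {S_k}{P'}{D^{\lh(P')}} = S_k$ and $\kappa_{S_k,P',D^{\lh(P')}} =^{\mathsf{max}} \id$, together with the substitution statement at the lower level. The second clause of the insertion definition then yields $\insertion S P {D^{\lh(P)}} = [S_1,\dots,S_{k-1},S_k,S_{k+1},\dots,S_n] = S$, and $\kappa$ consists of identities on the outer wedge summands and $\Sigma \kappa_{S_k,P',D^{\lh(P')}}$ on the middle; since tameness implies that suspension preserves $=^{\mathsf{max}}$, this middle substitution is maximally equal to $\Sigma \id = \id$, and thus $\kappa =^{\mathsf{max}} \id$ on $S$.

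\textbf{From $\kappa$ to the inserted substitution.} In both cases, \cref{lem:ins-comm-max} gives $\kappa_{S,P,D^{\lh(P)}} \circ (\insertion \sigma P \tau) \equiv^{\mathsf{max}} \sigma$. Composition respects definitional equality in any tame theory, so having $\kappa =^{\mathsf{max}} \id$ lets us reduce the left-hand side to $\insertion \sigma P \tau$ on locally maximal variables up to definitional equality, yielding the claim $\insertion \sigma P \tau \equiv^{\mathsf{max}} \sigma$ (or, equivalently via the conversion rule, $=^{\mathsf{max}}$).

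\textbf{Expected main obstacle.} The delicate point is that $\kappa$ is never \emph{syntactically} the identity on the middle: on $\lfloor P \rfloor$ it produces the unbiased composite $\mathcal{C}_{D^n}^n\sub\iota$, which only collapses to the variable through disc removal. The care required is therefore in tracking that this single non-syntactic step is the only discrepancy, so that combining the universal property from \cref{lem:ins-comm-max} with the definitional equality $\kappa =^{\mathsf{max}}\id$ yields the desired maximal equality on $\insertion \sigma P \tau$. The inductive step additionally requires that both suspension and left-composition preserve the relevant notion of maximal equality, both of which are standard consequences of tameness but must be invoked explicitly.
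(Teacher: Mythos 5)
The paper omits an explicit proof of this lemma (it is one of the lemmas deferred to the Agda formalization), so I can only evaluate your argument on its own terms. Your treatment of the tree equality $\insertion S P {D^{\lh(P)}} = S$ and the claim $\kappa_{S,P,D^{\lh(P)}} =^{\mathsf{max}} \id$ is sound: the base case correctly identifies $D^{\lh(P)}$ with $[S_k]$, and the inductive step correctly uses suspension-preservation of $=^{\mathsf{max}}$ (a consequence of the suspension condition of tameness together with the fact that suspension does not create new locally maximal variables in a pasting tree).

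The gap is in the last part. The lemma states $\insertion \sigma P \tau \equiv^{\mathsf{max}} \sigma$, which per the paper's definitions is a \emph{syntactic} equality on locally maximal variables, whereas your route through $\kappa =^{\mathsf{max}} \id$ and \cref{lem:ins-comm-max} can only produce the weaker \emph{definitional} statement $\insertion \sigma P \tau =^{\mathsf{max}} \sigma$: you are composing a syntactic identity ($\kappa\circ(\insertion\sigma P\tau)\equiv^{\mathsf{max}}\sigma$) with a step that is merely definitional ($\kappa =^{\mathsf{max}} \id$, which needs disc removal), and that necessarily downgrades the result. Your parenthetical remark that $\equiv^{\mathsf{max}}$ is ``equivalently via the conversion rule'' $=^{\mathsf{max}}$ is wrong: $\equiv^{\mathsf{max}}$ strictly implies $=^{\mathsf{max}}$ but not conversely, and the conversion rule does not help recover syntactic equality from definitional equality. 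To obtain a genuinely syntactic conclusion one should unfold the inserted labelling directly and compare it componentwise with $\sigma$ rather than route through $\kappa$. But notice that unfolding exposes a further tension: by \cref{lem:ins-comm-max}, $(\insertion\sigma P\tau)(\lfloor P\rfloor) \equiv d_{\lh(P)}\sub\tau$, whereas the insertion-redex condition gives $\sigma(\lfloor P\rfloor) \equiv \mathcal{C}_{D^{\lh(P)}}^{\lh(P)}\sub\tau$ — the former is the bare argument and the latter is the unary composite, and these differ syntactically. So any correct proof must explain what happens at $\lfloor P\rfloor$ specifically (either the lemma is to be read with $=^{\mathsf{max}}$ on the substitution claim, in which case your derivation is essentially complete modulo the misleading parenthetical, or the intended reading of $\equiv^{\mathsf{max}}$ here is one where $\lfloor P\rfloor$ is excluded or $\sigma$ is pre-processed as in the ``disc removal on inserted argument'' case of the confluence proof). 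Your proposal does not engage with this discrepancy and therefore does not establish the stated $\equiv^{\mathsf{max}}$.
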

\begin{proof}
  See\begin{tabular}[t]{l}
    \small\texttt{Catt/Tree/Insertion/Properties.agda(184)}\\
    \small\texttt{Catt/Typing/Insertion/Equality.agda(265)}.\\
  \end{tabular}\\[-9pt]\qedhere
\end{proof}
      By this lemma, \(\insertion S P {D^n} = S\), \(\insertion {\sigma'} P \tau \equiv^{\mathsf{max}} {\sigma'}\) and \(\kappa_{S,P,{D^n}} = \id\). Further, by \cref{lem:insertion-irrel}, we have \(\insertion \sigma P \tau \equiv \insertion {\sigma'} P \tau\). As \(\dim(A\sub{\kappa_{S,P,{D^d}}}) \leq \dim(a)\):
      \begin{align*}
        b &\equiv \Coh S {A \sub {\kappa_{S,P,D^d}}} {\insertion {\sigma'} P \tau}\\*
          &=_{\dim(a)} \Coh S A {\sigma'}\\*
          &\equiv c
      \end{align*}

\mypara{Endo-coherence removal on inserted argument.} The inserted argument must already be an standard composite or identity, so cannot reduce by endo-coherence removal, hence this case is vacuous.

\mypara{Insertion on inserted argument.} This case is given in the main body of the paper.

\mypara{\bf Cell reduction.} If \(a \leadsto b\) is an instance of a cell reduction, then \(a \equiv \Coh \Gamma A \sigma\), \(A \leadsto B\), and \(b \equiv \Coh \Gamma B \sigma\). We now split on the reduction \(a \leadsto c\).

\mypara{Cell reduction.}If \(a \leadsto c\) is the result of another cell reduction \(A \leadsto C\) then either the reductions target different parts of the type in which case there is a common reduct \(D\). Otherwise we can appeal to the inductive hypothesis on subterms to find a common reduct.

\mypara{Disc removal.} \(A\) is not in normal form, so \(a\) cannot have a disc as its head, hence \(a \leadsto c\) cannot be a disc removal.

\mypara{Endo-coherence removal.} Suppose \(A \equiv \arr s {A'} s\), and \(c \equiv \mathbbm{1}\sub{\{A',s\} \circ \sigma}\). If the reduction \(A \leadsto B\) arises from \(A' \leadsto B'\) then \(b\) immediately admits endo-coherence removal and reduces as follows:
      \[b' \equiv \mathbbm{1} \sub {\{B',s\} \circ \sigma}\]
      Then \(\{s,A'\} \leadsto \{s, B'\}\) and so \(c \leadsto^* b'\).

      Otherwise we either have the reduction \(\arr s {A'} s \leadsto \arr {s'} {A'} s\) or \(\arr s {A'} s \leadsto \arr s {A'} {s'}\). In either case we have \(b \leadsto \Coh \Gamma {\arr {s'} {A'} {s'}} \sigma\) and so by endo-coherence removal we get the following:
      \[b \leadsto^* b' \equiv \mathbbm{1} \sub{\{A',s'\} \circ \sigma}\]
      Hence we conclude \(c \leadsto^* b'\) as required.

\mypara{Argument reduction.} If \(c \equiv \Coh \Gamma A {\sigma'}\) arises from argument reduction \(\sigma \leadsto \sigma'\) then both \(b\) and \(c\) reduce to \(\Coh \Gamma B {\sigma'}\) by an argument reduction or cell reduction.

\mypara{\bf Disc removal.} If \(a \leadsto b\) is a disc removal then \(a \equiv \mathcal{C}_{D^n}^n\sub \sigma\) for some \(n\) and \(b \equiv d_n \sub \sigma\). Now \(a\) can not reduce by endo-coherence removal and disc removal is unique so the only remaining case is an argument reduction. Suppose \(\sigma \leadsto \sigma'\) and \(c \equiv \mathcal{C}_{D^n}^n \sub {\sigma'}\), which reduces to \(c' \equiv d_n \sub \sigma'\) by disc removal. If the reduction \(\sigma\leadsto \sigma'\) is along \(d_n\) then by definition \(c \leadsto c'\), and otherwise \(c \equiv c'\).

\mypara{\bf Endo-coherence removal.} For \(a \leadsto b\) to be an instance of Endo-coherence removal we must have \(a \equiv \Coh \Gamma {\arr s A s} \sigma\) and \(b \equiv \mathbbm{1}\sub{\{A,s\}\circ \sigma}\). The only case remaining for the second reduction is argument reduction, as the rest follow by symmetry or the uniqueness of endo-coherence removal. Therefore let \(\sigma \leadsto \sigma'\) and \(c \equiv \Coh \Gamma {\arr s A s} {\sigma'}\) which reduces to \(c' \equiv \mathbbm{1}\sub{\{A,s\} \circ \sigma'}\) by endo-coherence removal. As the transitive closure of reduction respects substitution, \(\{A,s\} \circ \sigma \leadsto^* \{A,s\} \circ \sigma'\) and so \(b \leadsto^* c'\).

\mypara{\bf Argument reduction.} We suppose \(a \equiv \Coh \Gamma A \sigma\) and \(b \equiv \Coh \Gamma A {\sigma'}\) where \(\sigma'\) is the result of reducing one argument \(x\) of \(\sigma\). The only case left is that \(a \leadsto c\) is also an argument insertion, and so \(c \equiv \Coh \Gamma A {\sigma''}\) with \(\sigma''\) the result of reducing an argument \(y\) of \(\sigma\). If \(x\) does not equal \(y\), then both \(\sigma'\) and \(\sigma''\) reduce to the substitution where we apply both reductions. Otherwise if \(x = y\) then by induction on subterms, \(\sigma'\) and \(\sigma''\) have a common reduct.
\end{proof}
\section{Implementation}
\label{sec:implementation}

\noindent
We have provided an OCaml implementation of our type theory \Cattsua. Here we show some example use-cases, indicating the file name of the example in the supplementary material.

\mypara{Triangle Equation.}{\quad\small\texttt{examples/monoidal.catt}}

\noindent
In a monoidal category, the triangle equation expresses compatibility of the left unitor, right unitor and associator:
$$
\begin{tikzpicture}
\node (1) at (0,0) {$(f \cdot \id) \cdot g$};
\node (2) at (3,0) {$f \cdot (\id \cdot g)$};
\node (3) at (1.5,-1) {$f \cdot g$};
\draw [->] (1) to node [above] {$\alpha_{f,\id,g}$} (2);
\draw [->] (2) to node [right=8pt, pos=.8] {$f \cdot \lambda_g$} (3);
\draw [->] (1) to node [left=8pt, pos=.8] {$\rho_f \cdot g$} (3);
\node [rotate=-90] at (1.5,-.4) {$\Rightarrow$};
\end{tikzpicture}
$$
\noindent
We express this in our implementation as a 3-morphism constructed as follows:

\noindent{\quad\footnotesize\texttt{coh triangle (x(f)y(g)z) :}}

\vspace{-1pt}
\noindent{\qquad\footnotesize{\color{white}}\texttt{vert (assoc f (id y) g) (horiz (id1 f) (unitor-l g))}}

\vspace{-1pt}
\noindent{\qquad\footnotesize\texttt{ => horiz (unitor-r f) (id1 g)}}

\noindent
This defines ``triangle'' to be function taking a set of arguments to the coherence with the given type.

\vspace{3pt}\noindent
Since the triangle equation is entirely expressed in terms of associator and unitor structure, we would expect this 3-morphism to normalize to an identity in our strictly associative and unital setting. We can test this as follows:

\noindent{\quad\footnotesize\texttt{normalize \{x :: *\} \{y :: *\} (f :: x => y) \{z :: *\} (g :: y => z)}}

\vspace{-1pt}
\noindent{\qquad\footnotesize\texttt{| triangle f g}}

\vspace{3pt}\noindent
The result is an identity as expected.

\mypara{Pentagon Equation.}{\quad\small\texttt{examples/monoidal.catt}}

\noindent
We next consider the pentagon constraint, the second axiom family of a monoidal category:

\vspace{-12pt}
$$
\begin{tikzpicture}[yscale=-1]
\node (1) at (0,0) {$((f \cdot g) \cdot h) \cdot i$};
\node (2) at (1,1) {$(f \cdot (g \cdot h)) \cdot i$};
\node (3) at (5,1) {$f \cdot ((g \cdot h) \cdot i)$};
\node (4) at (6,0) {$f \cdot (g \cdot (h \cdot i))$};
\node (5) at (3,-1) {$(f \cdot g) \cdot (h \cdot i)$};
\draw [->] (1) to node [left=5pt] {$\alpha_{f,g,h} \cdot i$} (2);
\draw [->] (2) to node [below] {$\alpha_{f,g \cdot h, i}$} (3);
\draw [->] (3) to node [right=5pt] {$f \cdot \alpha_{g, h, i}$} (4);
\draw [->] (1) to node [above left] {$\alpha_{f \cdot g, h, i}$} (5);
\draw [->] (5) to node [above right] {$\alpha_{f, g, h \cdot i}$} (4);
\node [rotate=-90] at (3,0) {$\Rightarrow$};
\end{tikzpicture}
$$

\vspace{-10pt}
\noindent
As for the triangle equation, we can construct this in \Catt as the following 3-morphism:

\noindent{\quad\footnotesize\texttt{coh pentagon (v(f)w(g)x(h)y(i)z) :}}%

\vspace{-1pt}
\noindent{\qquad\footnotesize\hspace{0pt}\texttt{vert (assoc (comp f g) h i) (assoc f g (comp h i))}}%

\vspace{-1pt}
\noindent{\qquad\footnotesize\texttt{=> vert}}

\vspace{-1pt}
\noindent{\qquad\footnotesize\texttt{(horiz (assoc f g h) (id1 i))}}

\vspace{-1pt}
\noindent{\qquad\footnotesize\hspace{00pt}\texttt{(vert (assoc f (comp g h) i)
                                                 (horiz (id1 f) (assoc g h i)))}}

\vspace{3pt}\noindent
Again employing the normalize command, we show that it reduces to the identity as expected.

\mypara{Syllepsis.}{\quad\small\texttt{examples/syllepsis.catt}}

\noindent
The syllepsis is a 5-dimensional homotopy which expresses the fact that the overcrossing and undercrossing are equivalent in 4\-dimensional space:
$$
\begin{aligned}
\begin{tikzpicture}
\draw (0,0) to [out=up, in=down] (1,1);
\draw [double=black, white, line width=3pt, double distance=.4pt] (1,0) to [out=up, in=down] (0,1);
\end{tikzpicture}
\end{aligned}
\quad=\quad
\begin{aligned}
\begin{tikzpicture}[xscale=-1]
\draw (0,0) to [out=up, in=down] (1,1);
\draw [double=black, white, line width=3pt, double distance=.4pt] (1,0) to [out=up, in=down] (0,1);
\end{tikzpicture}
\end{aligned}
$$
It is a fundamental move from low-dimensional topology, and plays an essential role in the homotopy groups of spheres. The bureaucracy of weak higher structures means that it has long been recognized as difficult to describe directly in a formal way, given the extensive use of interchangers, unitors and associators that are required to build it.

Two formal models for the syllepsis were presented at LICS 2022, one using homotopy type theory~\cite{sojakova2022syllepsis}, and an alternative using the type theory $\Cattsu$~\cite{StrictUnits}. The theory \Cattsua allows an even shorter representation of the syllepsis, purely in terms of interchanger coherences. The \Cattsua formalisation of the syllepsis makes use of the \texttt{ucomp} builtin throughout, which produces unbiased composites over a pasting diagram specified by a sequence of alternating dimensions and codimensions. As an example ``\texttt{ucomp [ 1 0 3 ]}'' is shorthand for ``the unbiased composite of a 1-cell with a 3-cell along their common 0-cell boundary''. This builtin avoids the boilerplate of defining many composition operations, but adds no expressive power to \Catt, as these can all be defined with the coherence constructor.

\printbibliography

\end{document}